\documentclass[11pt]{amsart}

\usepackage{amsmath, amsthm, amssymb, mathrsfs}
\usepackage[hidelinks]{hyperref}
\usepackage[parfill]{parskip}    \parskip = 0.2cm    
\usepackage[margin=1in]{geometry} 
\usepackage{tikz}
\usepackage{tikz-cd}
\usetikzlibrary{matrix}
\usepackage{comment} 
\usepackage[activate={true,nocompatibility},final,tracking=true,kerning=true,spacing=true,factor=1100,stretch=10,shrink=10]{microtype}
\usepackage{bbm}
\usepackage{dsfont} 

\usepackage{commutative-diagrams}

\usepackage{mathtools}

\graphicspath{ {images/} }
\usepackage{import}

\numberwithin{equation}{section}

\title[Unique ergodicity, and not, for primitive substitutions with isolated points]{\bf Unique ergodicity, and not, for primitive substitutions on compact alphabets containing isolated points}
\date{\today}

\author{James J.\ Walton} 
\address{School of Mathematical Sciences, Mathematical Sciences Building, University Park, Nottingham, NG7 2RD, United Kingdom}
\email{Jamie.Walton@nottingham.ac.uk}
\urladdr{https://www.nottingham.ac.uk/mathematics/people/jamie.walton}

\theoremstyle{plain}
\newtheorem{theorem}{Theorem}[section]
\newtheorem{proposition}[theorem]{Proposition}
\newtheorem{lemma}[theorem]{Lemma}
\newtheorem{corollary}[theorem]{Corollary}
\newtheorem{conjecture}[theorem]{Conjecture}

\theoremstyle{definition}
\newtheorem{definition}[theorem]{Definition}
\newtheorem{remark}[theorem]{Remark}
\AtEndEnvironment{remark}{\null\hfill\exend}

\newtheorem{example}[theorem]{Example}
\AtEndEnvironment{example}{\null\hfill\exend}

\newcommand{\Z}{{\mathbb Z}}

\newcommand{\R}{{\mathbb R}}
\newcommand{\N}{{\mathbb N}}

\newcommand{\mc}{\mathcal}
\newcommand{\A}{\mc A}

\newcommand{\freq}{\operatorname{freq}}

\newcommand{\sub}{\varrho}

\newcommand{\bbo}{\mathds{1}} 

\newcommand{\avg}{\mathrm{avg}}
\newcommand{\lang}{\mathcal{L}}

\newcommand{\exend}{\hfill \ensuremath{\Diamond}}

\begin{document}

\keywords{substitutions, infinite alphabets, positive operators, unique ergodicity}
\subjclass[2020]{37B10, 35Q55, 47B65, 52C23
}
\begin{abstract}
We consider generalised subshifts generated by continuous substitution on compact Hausdorff alphabets. Although primitivity still implies minimality of the subshift generated by the substitution, Durand, Ormes and Petite showed, in contrast to the finite case, that primitivity no longer implies unique ergodicity, by constructing counter-examples with Cantor alphabet. Here we show that, even for the arguably simplest case of the one-point compactification of the natural numbers, primitivity is still insufficient for unique ergodicity, or even the existence of a natural length function. In previous work with Ma\~{n}ibo and Rust we showed that, for irreducible substitutions, unique ergodicity and existence of a natural length function follow from strong power convergence of the renormalised substitution operator \(T\); sufficient criteria were also developed that can sometimes confirm this property. We show a partial converse to this: for irreducible substitutions admitting a natural length function (for instance, all irreducible constant length substitutions), unique ergodicity implies strong power convergence of \(T\). We then consider the case of alphabets with only finitely many accumulation points, showing how upper bounds (and usually an exact formula) for the essential spectral radius of \(T\) can be derived from associated finite substitutions, determined by the behaviour of substitution of the accumulation points. This may sometimes be used to show quasi-compactness of \(T\), and thus unique ergodicity for primitive substitutions. For primitive substitutions of alphabets with at least one isolated point, we show that strong power convergence and quasi-compactness of \(T\) are equivalent and, in fact, that these properties are equivalent to iteration of substitution growing words in length uniformly across all seeds in the alphabet.
\end{abstract}

\maketitle

\section{Introduction}

Let \(\A\) be a compact Hausdorff space, called an alphabet. We consider (generalised) subshifts \(X = X_{\sub}\) over \(\A\), that is, topologically closed and shift-invariant subspaces of \(\A^\Z\), which are generated by a substitution \(\sub \colon \A \to \A^\ast\) that continuously replaces letters in the alphabet with finite words. Such systems have been considered before, such as \cite{DOP18} or the compact automata of \cite[Chapter 12]{Que10}, as well as more general hierarchical systems in the higher dimensional geometric setting of fusion rules, established by Frank and Sadun \cite{FS14}. A framework for the study of compact alphabet substitutions was further developed in \cite{MRW25}. Compactness is a natural generalisation of finiteness of the alphabet that retains at least some of the flavour from the finite case, although others have also considered particular examples and classes of substitutions on infinite and untopologised alphabets, see \cite{Fer06,BJPS24}.

An important property of the subshift is unique ergodicity, which roughly pertains to a certain statistical regularity of occurrence of finite words. For finite alphabet substitutions, primitivity of \(\sub\) implies unique ergodicity of \(X\). Primitivity has a natural extension to compact alphabet substitutions (see \cite{DOP18}, \cite{MRW25} or Section \ref{sec:prereqs}), loosely speaking, it means that iterating substitution \(\sub^n(a)\) on any `seed' letter \(a\) densely (and uniformly, over \(a \in \A\)) fills the alphabet as \(n \to \infty\). Just as for finite substitutions, primitivity implies minimality of \(X\). It is natural to wonder if it still guarantees unique ergodicity.

The answer is no. Durand, Ormes and Petite \cite{DOP18} constructed minimal, self-induced but non-uniquely ergodic Cantor systems, by extension of certain non-uniquely ergodic Toeplitz shifts. A main result there, \cite[Theorem 25]{DOP18}, is that a self-induced minimal Cantor system is conjugate to a generalised subshift generated by a recognisable, primitive, aperiodic substitution on a compact, Hausdorff and zero-dimensional alphabet. The mentioned counterexamples are constant length substitutions on Cantor alphabets.

There are, however, plenty of situations in which the substitution subshifts are uniquely ergodic. For instance, it is known by the work of the current author with Ma\~{n}ibo and Rust \cite{MRW25} that primitivity \emph{does} unique ergodicity in the case of constant length substitutions on alphabets containing an isolated point. Substitutions on alphabets with isolated points can naturally arise, for instance, when compactifying substitutions on infinite, non-topologised alphabets. For bounded length substitutions, this may always be done using the Stone--\v{C}ech compactification, embedding the original alphabet as a set of isolated points, see \cite[Section 3.2]{MRW25}. In practice, much smaller compactifications are usually more natural (and tractable) though, such as the one-point compactification \(\N_\infty \coloneqq \N_0 \cup \{\infty\}\) of the naturals \(\N_0 = \{0,1,\ldots\}\), arguably the simplest case; it is certainly the most `discrete' infinite compact alphabet possible, with just a single accumulation point. Clearly, the topology of \(\N_\infty\) should severely restrict the possible behaviour of any substitution on it, so it is natural to ask: is it \emph{now} true that every primitive substitution on \(\A = \N_\infty\) generates a uniquely ergodic subshift?

In this paper, we show that the answer remains `no'. Consider the substitution on \(\N_0\) given by \(0 \mapsto 1 \mapsto 2\) and \(n \mapsto {0 \ (n-2) \ (n+1)}\) for \(n \geq 2\). For instance, the first few iterations of this on seed \(0\) are:
\[
0 \mapsto 1 \mapsto 2 \mapsto 0 \ 0 \ 3 \mapsto 1 \ 1 \ 0 \ 1 \ 4 \mapsto 2 \ 2 \ 1 \ 2 \ 0 \ 2 \ 5 \mapsto 0 \ 0 \ 3 \ 0 \ 0 \ 3  \ 2 \ 0 \ 0 \ 3 \ 1 \ 0 \ 0 \ 3  \ 0 \ 3 \ 6 \mapsto  \cdots .
\]
The this continuously extends to a substitution \(\sub\) on \(\A = \N_\infty\), with \(\sub(\infty) = 0 \infty \infty\) (since \(n-2\) and \(n+1 \to \infty\) as \(n \to \infty\) in the topology of \(\N_\infty\)). As we recall in Section \ref{sec:prereqs}, iteration of substitution defines a language and associated compact (generalised) subshift \(X \subseteq \A^\Z\), which for this example is minimal since the substitution is primitive. However, despite the abundance of isolated points, we prove in Theorem \ref{thm:non-UE example} that \(X\) is not uniquely ergodic. Even worse, this substitution does not admit a natural length function (explained below), answering Questions 4.9 and 6.14 of \cite{MRW25} in the negative, even in the restrictive case of \(\A = \N_\infty\).

For more courteous substitutions, tools to help verify unique ergodicity were developed in \cite{MRW25}. It was shown that for an irreducible substitution (see Section \ref{sec:prereqs}), strong power convergence of an associated operator \(T = (1/r)M\) was sufficient, which is the first statement of Theorem \ref{thm:main UE} below. The `substitution operator' \(M\) is an analogue of the (transpose of) the standard substitution matrix \cite{Fog02,BG13} from the finite case, and \(r\) is its spectral radius. Strong power convergence of \(T\) also guarantees the existence of a so-called natural length function (NLF), which is a continuous map \(\ell \colon \A \to \R_{\geq 0}\) for which, for some \(\lambda \geq 0\) (the `inflation factor') and all \(a \in \A\), we have \(\lambda \ell(a) = \ell(a_1) + \ell(a_2) + \cdots + \ell(a_m)\), where \(\sub(a) = a_1 a_2 \cdots a_m\) for \(a_i \in \A\); for irreducible substitutions, one must also have \(\lambda = r > 1\) and that \(\ell\) is a strictly positive function. Such a length function allows one to transform the symbolic rule into a geometric, self-similar one. Here we prove a partial converse to strong power convergence implying unique ergodicity:

\begin{theorem}\label{thm:main UE}
Let \(\sub\) be an irreducible substitution. If \(T\) is strongly power convergent then \(X\) is uniquely ergodic and \(\sub\) admits a natural length function. Conversely, if \(\sub\) is irreducible, admits a natural length function and \(X\) is uniquely ergodic, then \(T\) is strongly power convergent.
\end{theorem}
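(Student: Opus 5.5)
The first implication is exactly the main unique ergodicity result of \cite{MRW25}, so I would simply cite it. The work is in the converse. Throughout, \(M\) acts on \(C(\A)\) by \((Mf)(a) = \sum_{i} f(a_i)\), where \(\sub(a) = a_1 \cdots a_m\). Then \(T = (1/r)M\), and strong power convergence means that \(T^n f\) converges uniformly for every \(f \in C(\A)\).

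Assume \(\sub\) is irreducible, \(X\) is uniquely ergodic and \(\ell\) is a natural length function. By irreducibility \(\lambda = r\) and \(\ell > 0\), and the length-function identity reads \(M\ell = r\ell\), so \(T\ell = \ell\). Since \(\ell\) is continuous and strictly positive on compact \(\A\), it is bounded above and below. Hence \(\|T^n\| = \|T^n 1\|_\infty \le \max\ell/\min\ell\), so the powers of \(T\) are uniformly bounded. It therefore suffices to prove uniform convergence of \(T^n f\) on a dense set of \(f\), or for every \(f\).

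The key observation is that
\[
(M^n f)(a) = \sum_{b \in \sub^n(a)} f(b),
\]
the sum of \(f\) over the letters of the word \(\sub^n(a)\). Meanwhile \(M^n \ell(a) = r^n \ell(a)\) is the \(\ell\)-weighted length of \(\sub^n(a)\). Hence
\[
T^n f(a) = \ell(a)\,\frac{\sum_{b\in\sub^n(a)} f(b)}{\sum_{b\in\sub^n(a)} \ell(b)}.
\]
The aim is to show that the quotient converges uniformly in \(a\) to the constant
\[
c_f = \frac{\int f(x_0)\,d\mu}{\int \ell(x_0)\,d\mu},
\]
where \(\mu\) is the unique invariant measure and \(x_0\) is the zeroth coordinate. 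That gives \(T^n f \to c_f\,\ell\) uniformly, which is strong power convergence with rank-one limit \(f \mapsto c_f \ell\).

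To prove the uniform convergence, I would use the standard characterisation that unique ergodicity is equivalent to uniform convergence of Birkhoff averages: \(\frac1N\sum_{k<N} g(\sigma^k x) \to \int g\,d\mu\) uniformly in \(x \in X\), for continuous \(g\) on \(X\). Apply this with \(g = f\circ\pi_0\) and \(g = \ell\circ\pi_0\). Every word \(\sub^n(a)\) lies in the language, so it appears in some \(x \in X\). Moreover its length \(|\sub^n(a)|\) tends to infinity uniformly in \(a\): we have \(|\sub^n(a)| \ge r^n \min\ell/\max\ell\) with \(r > 1\). Thus both the numerator and the denominator, divided by \(|\sub^n(a)|\), converge uniformly to \(\int f\,d\mu\) and \(\int \ell\,d\mu > 0\) respectively. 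The quotient then converges uniformly, as required.

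The main obstacle I expect is justifying the step "uniform Birkhoff averages over words in the language" in the compact-alphabet setting. This requires two facts. First, every \(\sub^n(a)\) must genuinely occur in some point of \(X\); the language is defined via iterating \(\sub\) on letters, so I expect that to hold by definition or from irreducibility. Second, the uniform ergodic theorem must hold for continuous functions on a compact metrisable, or merely compact Hausdorff, \(X\). The usual proof via weak-* limits of empirical measures works for compact Hausdorff \(X\), using nets if necessary. I would argue this by contradiction: if the convergence failed, a sequence of words with bad averages would have empirical measures accumulating on an invariant measure different from \(\mu\), contradicting unique ergodicity.
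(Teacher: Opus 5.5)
Your proof of the converse is correct and is essentially the paper's argument. Like you, the paper writes $(T^n f)(a)$ as $\#\sub^n(a)/r^n$ times the average of $f$ over $\sub^n(a)$, places that superword inside some $w \in X$ (this step uses irreducibility via $\lang(X) = \lang(\sub)$; it does not hold by definition) and applies Oxtoby's uniform ergodic theorem. The only difference is that the paper first normalises $\ell$ to get $r^n\ell(a)/\#\sub^n(a) \to 1$ uniformly, whereas your single quotient $\sum f(b)/\sum \ell(b)$ handles the normalisation and the averaging in one step. For the forward direction, note that the main theorem of \cite{MRW25} is stated for primitive substitutions. The paper explains that it extends to irreducible ones and gives a direct proof in the appendix, and the natural length function there is simply $\lim_n T^n\bbo$.
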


For instance, this shows that strong power convergence of \(T\) is equivalent to unique ergodicity of \(X\) for irreducible and constant length substitutions, since these always admit the NLF function \(\bbo = (a \mapsto 1)\). We are unable to establish whether or not unique ergodicity is sufficient for the existence of a NLF function in general, although we do show that it forces the growth of \(\# \sub^n(a)\) as \(n \to \infty\) to have the same exponential part over all \(a \in \A\), see Proposition \ref{prop:superword growth without NLF} (and also the tighter bound of Proposition \ref{prop:UE superword growth} when a NLF is assumed to exist).

The proof of the first direction of Theorem \ref{thm:main UE} from \cite{MRW25} uses some standard theory of positive operators to establish the existence of a NLF function, and then uses that NLF to move to the geometric setting of a \(1\)-dimensional tiling space \cite{Sad08} with \(\R\)-action. This is convenient, as it allows one to use Frank and Sadun's fusion framework \cite{FS14} to describe the invariant measures of \(X\) in terms of the action of the dual operator \(T'\) on the positive dual cone (something which also applies to `higher dimensional compact substitutions', such as the pinwheel \cite{Rad94}). For completeness, though, we provide an alternative proof in the Appendix, which remains within the symbolic setting and also allows one to bypass some minor complications (for instance, recognisability is not a concern with this approach).

Even stronger properties of \(T\) are possible. Conditions were found in \cite{MRW25} that ensure \(T\) is even quasi-compact which, with primitivity, implies not just strong but even uniform power convergence of \(T\) (to a projection operator mapping to the \(1\)-dimensional subspace spanned by a NLF). Such substitutions behave `more like finite substitutions' or `more discretely'. And indeed, it was shown that constant length, primitive substitutions with an isolated point satisfy it. A weaker property is mean ergodicity of \(T\), which may be shown to at least imply the existence of a NLF (see \cite[Section 5]{MRW25}). The situation is varied in general: there primitive substitutions with \(T\) uniformly convergent, others with \(T\) strongly but not uniformly power convergent, and the counterexample above shows that the weakest property of mean ergodicity can also fail to hold. However, the following shows that this landscape is vastly simplified when \(\A\) contains an isolated point: for primitive substitutions, these properties are equivalent, and also equivalent to uniformity of growth of letters under iteration of substitution:

\begin{samepage}
\begin{theorem}\label{thm:equivalence of operator conditions}
If \(\sub\) is a primitive substitution then each item below implies the next one. Suppose, additionally, that \(\A\) contains an isolated point or, more generally, that there exists a `weak coincidence', that is, some \(p \in \A\) and \(n \in \N\) for which, for all \(a \in \A\), we have \(p \triangleleft \sub^n(a)\). Then (7) implies (1) so (1--7) are equivalent and, by Theorem \ref{thm:main UE}, all are sufficient for unique ergodicity of \(X\).
\begin{enumerate}
	\item \(T\) is quasi-compact;
	\item \(T\) is uniformly power convergent;
	\item \(T\) is strongly power convergent;
	\item \(T\) is mean ergodic;
	\item \(\sub\) admits a natural length function;
	\item there exist \(\alpha\), \(\beta > 0\) for which, for all \(n \in \N\) and \(a \in \A\), we have \(\alpha r^n \leq \# \sub^n(a) \leq \beta r^n\);
	\item \(\displaystyle \inf_{n \in \N} \frac{ \min_{a \in \A} \# \sub^n(a) }{ \max_{b \in \A} \# \sub^n(b) } > 0\).
\end{enumerate}
\end{theorem}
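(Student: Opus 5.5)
The plan is to run the chain (1)\(\Rightarrow\)(2)\(\Rightarrow\)\(\cdots\)\(\Rightarrow\)(7) using standard positive operator theory on \(C(\A)\). Then, under the weak coincidence hypothesis, we close the loop by proving (7)\(\Rightarrow\)(1) through an estimate on the essential spectral radius. Throughout, \(M\) acts on \(C(\A)\) by \((Mf)(a)=\sum_i f(a_i)\), where \(\sub(a)=a_1\cdots a_m\). Then \(\#\sub^n(a)=(M^n\bbo)(a)\) and \(\|M^n\|=\max_a\#\sub^n(a)\). Also \(T=M/r\).

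\emph{The forward chain.} For (1)\(\Rightarrow\)(2): \(T\) is positive with spectral radius \(1\). Quasi-compactness together with primitivity (which makes \(T\) eventually strictly positive in a suitable sense) gives that \(1\) is a simple pole of the resolvent and the only peripheral eigenvalue. The peripheral spectrum of a quasi-compact positive irreducible operator is cyclic; primitivity excludes cyclicity of order \(>1\). Hence \(T^n\to P\) uniformly, where \(P\) is the rank-one projection, as in \cite{MRW25}.

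The steps (2)\(\Rightarrow\)(3)\(\Rightarrow\)(4) are trivial. For (4)\(\Rightarrow\)(5), apply \(P=\lim \frac1N\sum T^n\) (strongly) to \(\bbo\). This gives \(\ell=P\bbo\ge0\), continuous with \(T\ell=\ell\), that is \(M\ell=r\ell\). It is nonzero because the means of \(\bbo\) stay bounded below, and strictly positive by irreducibility. This is as recalled from \cite[Section 5]{MRW25}.

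For (5)\(\Rightarrow\)(6): \(\ell\) is continuous and strictly positive on compact \(\A\), so \(c\bbo\le\ell\le C\bbo\). Positivity of \(M^n\) then gives \(c\,\#\sub^n(a)\le r^n\ell(a)\le C\,\#\sub^n(a)\), which yields \(\alpha=\min\ell/C\) and \(\beta=\max\ell/c\). Finally (6)\(\Rightarrow\)(7) is immediate, with ratio \(\ge\alpha/\beta\).

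\emph{Closing the loop: (7)\(\Rightarrow\)(1).} Fix \(p\) and \(n_0\) with \(p\triangleleft\sub^{n_0}(a)\) for every \(a\). The idea is to split \(T^{n}\) as a compact (finite rank) part plus a small part, by exploiting the occurrence of \(p\).

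First, since \(p\) occurs in every \(\sub^{n_0}(a)\), for \(n\ge n_0\) every \(\sub^n(a)\) contains \(\sub^{n-n_0}(p)\) as a subword. By (7), \(\#\sub^{n-n_0}(p)\ge\delta\max_b\#\sub^{n-n_0}(b)\). So the fraction of the word \(\sub^n(a)\) covered by this distinguished copy of \(\sub^{n-n_0}(p)\) is at least \(\delta\|M^{n-n_0}\|/\|M^n\|\ge\delta/\|M^{n_0}\|=:\eta>0\), uniformly in \(a\).

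Second, write \(M^n=K_n+R_n\). Here \((K_nf)(a)\) is the contribution of the chosen copy of \(\sub^{n-n_0}(p)\) at the position where \(p\) occurs (choose, say, the first occurrence of \(p\) in \(\sub^{n_0}(a)\)). Thus \((K_nf)(a)=(M^{n-n_0}f)(p)\cdot\mathbbm{1}\)-type: this is the rank-one functional \(f\mapsto(M^{n-n_0}f)(p)\) times a function of \(a\). The remainder is \(R_n=M^n-K_n\).

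The hard part is continuity. The function \(a\mapsto\) (indicator of the chosen occurrence) must be continuous in \(a\), so that \(K_n\) maps into \(C(\A)\). When \(p\) is isolated, the occurrence positions of \(p\) in \(\sub^{n_0}(a)\) are locally constant in \(a\), since \(\sub^{n_0}\) is continuous and \(\{p\}\) is clopen. So \(K_n\) is a finite-rank (hence compact) operator on \(C(\A)\), after partitioning \(\A\) into the clopen sets where the word length and first-occurrence position are fixed. For a general weak coincidence one instead uses a continuous partition of unity over positions, or replaces \(p\) by a small neighbourhood. This step is where I expect the main technical obstacle, and I would try the clopen/isolated case first.

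Third, the remainder is positive with \(\|R_n\|=\max_a(\#\sub^n(a)-\#\sub^{n-n_0}(p))\le(1-\eta')\|M^n\|\). Here \(\eta'>0\) is obtained by using (7) again in the form \(\#\sub^n(a)\ge\delta\|M^n\|\) relative to \(\max\), adjusting constants. Iterating the decomposition, or applying it to \(T^{kn}\), shows \(\|T^{N}-\text{compact}\|\le C(1-\eta')^{k}\) with \(N=kn\) growing while \(\|T^N\|\) stays bounded. Boundedness of \(\|T^N\|\) follows from (7) and submultiplicativity: the ratio in (7) forces \(\|M^n\|\asymp r^n\). Hence the essential spectral radius of \(T\) is \(<1=\) its spectral radius, which is quasi-compactness.

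Unique ergodicity then follows from (3) and Theorem \ref{thm:main UE}.
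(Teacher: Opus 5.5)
Your strategy matches the paper's: peel off a rank-one piece coming from one occurrence of \(p\), show the remainder decays geometrically relative to \(M\) using (7), and conclude by invariance of \(r_\mathrm{ess}\) under compact perturbations. (The paper replaces \(\sub\) by \(\sub^{n_0}\) and uses the \((\varepsilon_\mathrm{con})\) reduction \(M_\tau=M-V\), \(Vf=f(p)\bbo\).) The forward chain is fine.

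The ``hard part'' you single out does not exist. The contribution of the chosen copy of \(\sub^{n-n_0}(p)\) to \((M^nf)(a)\) is \(\sum_{b\triangleleft\sub^{n-n_0}(p)}f(b)=(M^{n-n_0}f)(p)\). This holds wherever that copy sits and for every \(a\), because \(M\) only sees the Abelianisation. So \(K_nf=(M^{n-n_0}f)(p)\,\bbo\) exactly, which is a rank-one operator on \(C(\A)\) for any weak coincidence. No clopen partition or partition of unity is needed. Replacing \(p\) by a neighbourhood would actually destroy this rank-one structure. \(R_n=M^n-K_n\) is then automatically a positive operator on \(C(\A)\), so the general weak-coincidence case, which you leave open, is handled for free.

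The real gap is the decay step, which you assert but do not prove; the routes you indicate do not give it as stated. A single decomposition only yields \(\|T^n\|_\mathrm{ess}\le(1-\eta)\|T^n\|\). Yet (7) only tells you \(1\le\|T^n\|\le1/D\), with \(D\) the infimum in (7), your \(\delta\). So this bound need not be \(<1\) and does not improve with \(n\). Likewise, iterating via \(\|R_n^k\|\le\|R_n\|^k\) gives \(\|T^{kn}\|_\mathrm{ess}\le((1-\eta)\|T^n\|)^k\), which need not decay.

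You must measure the loss against \(M^{kn}\) rather than \((M^n)^k\), by proving the pointwise bound \(R_n^k\bbo\le(1-\eta)^kM^{kn}\bbo\) by induction. For the inductive step, positivity of \(R_n\) gives \(R_n^{k+1}\bbo\le(1-\eta)^kR_nM^{kn}\bbo\). Moreover, \((R_nM^{kn}\bbo)(a)=\#\sub^{(k+1)n}(a)-\#\sub^{(k+1)n-n_0}(p)\le(1-\eta)\#\sub^{(k+1)n}(a)\), with \(\eta=D/\|M^{n_0}\|\). This last inequality is your own fraction estimate, applied afresh at level \((k+1)n\), i.e.\ (7) is invoked at every level. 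Since \(M^{kn}-R_n^k\) is compact, you get \(\|T^{kn}\|_\mathrm{ess}\le\|R_n^k\|/r^{kn}\le(1-\eta)^k/D\), hence \(r_\mathrm{ess}(T)\le(1-\eta)^{1/n}<1\). This induction is exactly the paper's key inequality \(\#\tau^k(a)\le q^k\#\sub^k(a)\) with \(q=1-D/L\). From it the paper gets \(r(M_\tau)\le q\,r\), and then \(r_\mathrm{ess}(M)=r_\mathrm{ess}(M_\tau)\le r(M_\tau)<r\).
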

\end{samepage}

It is perhaps the case that unique ergodicity implies the distortion bound in (7) above, in which case unique ergodicity could be added to the list of equivalent properties. In any case, verifying unique ergodicity cannot be far from establishing any of (1--7) for primitive substitutions with an isolated point (see also Proposition \ref{prop:UE superword growth}). Condition (1), of quasi-compactness of \(T\) (equivalently \(M\)) means that the essential spectral radius \(r' \coloneqq r_\mathrm{ess}(M)\) satisfies \(r' < r\). So, in Section \ref{sec:sub reduction}, we further develop methods for bounding \(r'\) from above. They are particularly suited to alphabets with an abundance of isolated points, in particular, with only finitely many accumulation points. We show that, in this case, \(r'\) may always be bounded above by the leading eigenvalue of an associated finite substitution on the accumulation points, given by applying a certain reduction procedure on \(\sub\). Perhaps after passing to a power of \(\sub\), this typically yields an exact formula for \(r_\mathrm{ess}(M)\), see in particular Corollary \ref{cor:essential spectral radius} and the examples that follow. Quasi-compactness, and thus unique ergodicity of \(X\), might then be verified for a given example by searching for some \(n \in \N\) with \(\min_{a \in \A} \# \sub^n(a) > r'\) (see, for instance, the example in the second paragraph of Section \ref{sec:counterexample discussion}), or proving that, for some \(\epsilon > 0\), one has \(\max_{a \in \A} \# \sub^n(a)  > (r'+\epsilon)^n\) for sufficiently large \(n\).

The paper is organised as follows. In the next section, we review generalised subshifts over compact alphabets, substitutions and the substitution operator. We also recall the uniform version (Oxtoby's Theorem) of Birkhoff's Ergodic Theorem, which characterises unique ergodicity in terms of uniform convergence of time-averages of continuous functions, which we use to prove both directions of Theorem \ref{thm:main UE}. A more self-contained proof to that in \cite{MRW25}, of the first direction of this result, is given in the Appendix for completeness, whilst the new direction is proved in Section \ref{sec:necessary UE conditions}, where we also give some useful results relating unique ergodicity and the existence of a NLF to regularity of growth of words under substitution. In Section \ref{sec:non-UE} we prove, in Theorem \ref{thm:non-UE example}, that the counterexample introduced above has non-uniform growth of superwords across the alphabet. With the results of Section \ref{sec:necessary UE conditions}, this proves the associated system is not uniquely ergodic, and does not have a NLF. Finally, in Section \ref{sec:sub reduction}, we recall the concept of quasi-compactness and develop a notion of substitution reduction with which to effectively estimate the essential spectral radius of the substitution operator for substitutions on alphabets with an abundance of isolated points. This is demonstrated on a range of examples. Theorem \ref{thm:equivalence of operator conditions} in proved in Section \ref{sec:equivalence of operator properties}, whose main step is showing that (7) implies (1) when the alphabet contains an isolated point.

\section{Prerequisites and notation} \label{sec:prereqs}

We refer the reader to \cite{MRW25}, especially Sections 2--4, for details of most of the following preliminaries. However, we will recall here all of the key concepts and notation required.

\subsection{Compact alphabets and words}
We let \(\N = \{n \in \Z \mid n > 0\}\), and \(\N_0 \coloneqq \{0\} \cup \N\). Throughout, \(\A\) denotes our \textbf{alphabet}, which is a non-empty compact Hausdorff space. We let \(\A^\ast \coloneqq \bigsqcup_{i=0}^\infty \A^i\) denote the space of \textbf{words}, where each \(\A^i\) has the product topology and \(\A^\ast\) is topologised as their disjoint union. Here, \(\A^0 = \{ \varepsilon \}\) is the one-point space consisting of the empty word \(\varepsilon\) which has no letters. We let \(\A^+ \coloneqq \bigsqcup_{i=1}^\infty \A^i \subset \A^\ast\) denote the subspace of non-empty words. Instead of tuples, words \(w = (a_0,a_1,\ldots,a_{n-1}) \in \A^n\) will be spelled out by concatenation i.e., \(w = a_0 a_1 \cdots a_{n-1}\). Concatenation \((a_0 a_1 \cdots a_{n-1},b_0 b_1 \cdots b_{m-1}) \mapsto a_0 a_1 \cdots a_{n-1} b_0 b_1 \cdots b_{m-1}\) defines a continuous map \(\A^\ast \times \A^\ast \to \A^\ast\) where, just as for letters, we denote the concatenation of two words \(u\) and \(v\) by \(uv\). The \textbf{length function} \(\# \colon \A^\ast \to \N_0\), defined by \(\# u = n\) for \(u \in \A^n\), defines a continuous map \(\# \colon \A^\ast \to \N_0\) (where \(\N_0\) is given the discrete topology).

Given \(m \leq n \in \Z \cup \{-\infty,\infty\}\), we let \([m..n)\), \([m..n]\) etc., denote the corresponding \textbf{interval} of (extended) integers i.e., \([m..n) \coloneqq [m,n) \cap (\Z \cup \{-\infty,\infty\}\), which is equal to \(\{m,m+1,\cdots,n-1\}\) when \(m\), \(n \in \Z\) etc. For any finite set \(S\) we denote its number of elements by \(\# S \in \N_0\) which, for an interval \(I\), we call its \textbf{length}.

For \(u \in \A^n\) and \(i \in [0..n)\), we always let \(u_i \in \A\) denote the \(i\)th letter of \(u\), so \(u = u_0 u_1 \cdots u_{n-1}\). Similarly, given an interval \(I = [p..q) \subseteq [0..n)\), we let \(u_I \coloneqq u_p u_{p+1} \cdots u_{q-1}\). We write \(v \triangleleft u\), and say that \(v\) is a \textbf{subword} of \(u\), if \(v = u_I\) for some such \(I\); if \(w \in \A^{\Z}\) is a bi-infinite word, \(i \in \Z\) and \(I\) is a finite interval in \(\Z\), we define \(w_i \in \A\), \(w_I \in \A^\ast\) and the subword notation \(v \triangleleft w\) analogously.

\subsection{Compact alphabet substitutions}
A \textbf{substitution} (on \(\A\)) is a continuous map \(\sub \colon \A \to \A^\ast\). Usually, our substitutions will be \textbf{non-erasing}, meaning that \(\sub(\A) \subset \A^+\) i.e., all letters substitute to non-empty words, although it will occasionally be useful to allow for erasing substitutions too. The \textbf{length} of \(\sub\), given by \(L \coloneqq \max_{a \in \A} \# \sub(a) \in \N_0\), is bounded, since \(\# \circ \sub\) is continuous and \(\A\) is compact so \(\#(\sub(\A)) \subset \N_0\) is compact, thus finite. For \(i \in [0..L]\),
\[
\A_i \coloneqq \{ a \in \A \mid \# \sub(a) = i \} .
\]
Since \(\A_i = (\# \circ \sub)^{-1} \{i\}\), and \(\{i\} \subset \N_0\) is clopen (as are all subsets of \(\N_0\)), it follows that \(\A = \A_0 \sqcup \A_1 \sqcup \cdots \sqcup \A_L\) (after removing any \(\A_i = \emptyset\)) is a clopen partition of \(\A\).

By concatenation, substitution naturally extends to a map \(\sub \colon \A^\ast \to \A^\ast\), by \(\sub(a_1 a_2 \cdots a_n) \coloneqq \sub(a_1) \sub(a_2) \cdots \sub(a_n)\) for a word \(a_1 a_2 \cdots a_n \in \A^n\). Thus, \(\sub\) may be iterated on words. A word of the form \(\sub^n(a)\), where \(n \in \N_0\) and \(a \in \A\), is called a \textbf{superword}, or more specifically an \textbf{\(n\)-superword}. We say that \(u \in \A^\ast\) is \textbf{generated} (by \(\sub\)) if \(u \triangleleft v\) for some superword \(v\). The topological closure in \(\A^\ast\) of the subset of generated words is denoted by \(\lang(\sub)\), and called the `legal words' or the \textbf{language} of \(\sub\). Intuitively, a word \(u\) is legal when it is either generated, or there is a sequence of \(n\)-superwords for increasing \(n\) which contain smaller and smaller perturbations of the individual letters of \(u\) as subwords.

The language defines the subshift associated to \(\sub\), given by
\[
X_{\sub} \coloneqq \{ w \in \A^\Z \mid \text{if } u \in \A^\ast \text{ and } u \triangleleft w \text{ then } u \in \lang(\sub)\} ,
\]
which is given the subspace topology in the \textbf{full shift} \(\A^\Z\), which has the usual product topology (so is compact, by Tychonoff's Theorem). Throughout, we drop the subscript and write \(X_{\sub} = X\). We let \(\sigma \colon \A^\Z \to \A^\Z\) denote the left-shift \(\sigma((w_i)_i) \coloneqq (w_{i+1})_i\), which is a homeomorphism. It can be shown that \(X\) is also compact and \(\sigma\)-invariant in the full shift, that is, \(\sigma\) restricts also to a homeomorphism \(\sigma \colon X \to X\). We have \(X \neq \emptyset\) precisely when \(\sub\) generates arbitrarily long words, that is, when for all \(n \geq 0\) there exists some \(a \in \A\) and \(k \in \N_0\) with \(\# \sub^k(a) \geq n\), or equivalently every \(\lang(\sub) \cap \A^n \neq \emptyset\). We make it a running assumption that this is the case.

A substitution \(\sub\) is \textbf{irreducible} if it cannot be restricted to a smaller compact alphabet substitution. That is, there does not exist a non-empty closed subset \(\mathcal{B} \subseteq \A\) for which \(\sub(\mathcal{B}) \subset \mathcal{B}^\ast\). A stronger (but related) property is primitivity, which is a natural extension of the same notion from finite alphabet substitutions. We say that \(\sub\) is \textbf{primitive} if, for all non-empty open subsets \(U \subseteq \A\) (have in mind some neighbourhood of a letter), there exists some \(n = n_U\) so that all \(n\)-superwords contain a letter in \(U\). Primitivity implies minimality of the dynamical system \((X,\sigma)\) i.e., \(X\) does not contain any non-empty, closed and \(\sigma\)-invariant subset (equivalently, all \(x \in X\) have dense orbit under application of the shift \(\sigma\) and its inverse). Minimality is equivalent to a natural extension of the standard notion of `repetitivity' (uniform recurrence) of words to such generalised subshifts, see \cite[Proposition 2.14]{MRW25}; roughly speaking, all finite words, up to arbitrarily small perturbation of the letters, appear with bounded gaps in all \(w \in X\).

\subsection{Unique ergodicity}
A further desirable property to minimality is not just `bounded waiting times', but also statistical regularity i.e., unique ergodicity of \((X,\sigma)\), which we define below. In the finite alphabet case, primitivity implies unique ergodicity, but it is shown in \cite{DOP18} that there are counter-examples to this when \(\A\) is the Cantor set. To prevent unnecessary restrictions on our alphabets (as well as the introduction of distracting measure theoretic technicalities), measures will be identified with linear functionals, as follows. We let \(C(X)\) denote the Banach space of continuous functions \(f \colon X \to \R\) with the supremum norm \(\|f\| \coloneqq \sup_{x \in X} |f(x)| = \max_{x \in X} |f(x)|\) (by compactness). In fact, it is a Banach lattice, when equipped with an ordering defined by the \textbf{positive cone}
\[
K(X) \coloneqq \{f \in C(X) \mid f(x) \geq 0 \text{ for all } x \in X \} .
\]
For \(f\), \(g \in C(X)\), we write \(f \leq g\) if \(g-f \in K(X)\). We then have the Banach dual \(C(X)'\) of continuous linear maps \(\nu \colon C(X) \to \R\), and its dual positive cone
\[
K'(X) \coloneqq \{\nu \in C(X)' \mid \nu(f) \geq 0 \text{ for all } f \in K \} .
\]
An element \(\nu \in K'(X)\) is called a \textbf{measure}. Thus, as one tends to hope, a measure is precisely a way of continuously and linearly converting continuous functions into scalars, their `integrals', and in such a way that integrals of non-negative functions are non-negative. We say that a measure \(\mu\) is a \textbf{probability measure} if \(\mu(\bbo) = 1\), where \(\bbo \colon X \to \R\) is the constant function \(\bbo(x) = 1\) for all \(x \in X\). By the Riesz--Markov--Kakutani Representation Theorem, we are thus restricting to positive, regular Borel probability measures on \(X\). Its shift \(\sigma \mu\) is the measure defined by \((\sigma \mu)(f) \coloneqq \mu(\sigma f)\), and \(\mu\) is \textbf{\(\sigma\)-invariant} if \(\mu = \sigma \mu\). Finally, we say that \(X\) is \textbf{uniquely ergodic} if it has a unique invariant probability measure.

Aside from some notational advantages in our current context, this formalism means that metrisability need not be assumed in the standard characterisation of unique ergodicity (Birkhoff's Ergodic Theorem, or more specifically Oxtoby's uniform version for uniquely ergodic systems), given below. For a finite set (almost always an interval) \(I \subset \Z\), \(f \in C(X)\) and \(w \in X\), we denote the corresponding \textbf{time-average} by
\[
\avg^I_w(f) \coloneqq \frac{1}{\# I} \sum_{j \in I} f(\sigma^j w) .
\]
When the function \(f\) is understood, we may write simply \(\avg_w^I\), and in the standard case that \(I = [0,n)\) we simplify further, to \(\avg_w^n\). For an interval \(I = [m,n)\), we have \(\avg_w^I = \avg_{\sigma^m w}^{n-m}\), so of course it would be sufficient (and more standard) to only consider time-averages over intervals of the form \([0,n)\), although we prefer the slight notational flexibility here, when splitting time-average calculations: given any partition \(I = I_1 \sqcup \cdots \sqcup I_k\), we have the trivial identity
\begin{equation} \label{eq:split average}
\avg^I_w = \frac{\# I_1}{\# I} \avg^{I_1}_w + \cdots + \frac{\# I_k}{\# I} \avg^{I_k}_w .
\end{equation}

Given \(x\), \(y \in \R\) and \(\epsilon > 0\), we will write \(x = y \pm \epsilon\) if \(|x-y| \leq \epsilon\). Unique ergodicity can then be characterised (see, for instance, \cite[Theorem 10.6]{EFHN15}) as follows:

\begin{theorem} \label{thm:UE characterisation}
For a compact Hausdorff space \(X\) and \(\sigma \colon X \to X\), the following are equivalent:
\begin{enumerate}
	\item for every continuous function \(f \colon X \to \R\), all time-averages of \(f\) converge uniformly. That is, for all \(\epsilon > 0\) and \(w \in X\), there exists some \(k = k(\epsilon) \in \N\) and \(c(f) \in \R\) so that, for all \(n \geq k\), we have \(|\avg_w^n(f) - c(f)| \leq \epsilon\);
	\item \((X,\sigma)\) is uniquely ergodic, that is, there exists a unique \(\nu \in K'\) with \(\nu(\bbo) = 1\) that is \(\sigma\)-invariant.
\end{enumerate}
The time-average \(c(f)\) from (1) is then necessarily equal to the space-average \(\nu(f)\) from (2).
\end{theorem}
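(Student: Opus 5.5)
This is the classical Oxtoby characterisation, and I would prove it by the standard functional-analytic route, taking care never to use metrisability or sequential compactness. Since the paper phrases measures as positive functionals, the tools are the Riesz-type duality built into the definitions, Hahn--Banach, and weak-\(*\) compactness of the state space (Banach--Alaoglu).

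\emph{(1) \(\Rightarrow\) (2).} Suppose first that \(\nu\) is any invariant probability measure. Invariance gives \(\nu(\avg^n_\bullet(f)) = \nu(f)\) for each \(n\), where \(\avg^n_\bullet(f)\) denotes the function \(w \mapsto \avg^n_w(f)\). By (1), \(\avg^n_\bullet(f) \to c(f)\bbo\) uniformly. Continuity of \(\nu\) in the sup norm and \(\nu(\bbo)=1\) then give \(\nu(f) = c(f)\) for every \(f\), so there is at most one invariant measure.

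For existence, I would define \(\nu(f) \coloneqq c(f)\) directly. This is linear, positive and satisfies \(|c(f)| \le \|f\|\), so it is a probability measure. It is invariant because
\[
\avg^n_w(f\circ\sigma) - \avg^n_w(f) = \frac{f(\sigma^n w) - f(w)}{n} \to 0 .
\]
An alternative is the Krylov--Bogolyubov argument, taking a weak-\(*\) cluster point of the measures \(f \mapsto \avg^n_w(f)\); this uses nets, but works without metrisability.

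\emph{(2) \(\Rightarrow\) (1).} This is the main step. Let \(\nu\) be the unique invariant probability measure, and fix \(f\) with \(c(f) \coloneqq \nu(f)\). Suppose, for contradiction, that uniform convergence fails. Then there are \(\epsilon>0\), \(n_k \to \infty\) and points \(w_k\) with \(|\avg^{n_k}_{w_k}(f) - \nu(f)| > \epsilon\). Define probability measures \(\mu_k(g) \coloneqq \avg^{n_k}_{w_k}(g)\). The set of probability measures is weak-\(*\) compact, so the net \((\mu_k)\) has a cluster point \(\mu\), again with no sequential extraction. The telescoping estimate \(|\mu_k(g\circ\sigma) - \mu_k(g)| \le 2\|g\|/n_k\) shows that \(\mu\) is invariant. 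The inequality \(|\mu(f) - \nu(f)| \ge \epsilon\) passes to the cluster point because evaluation at \(f\) is weak-\(*\) continuous. Hence \(\mu \ne \nu\), contradicting uniqueness.

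The final clause, that \(c(f) = \nu(f)\), comes out of the (1) \(\Rightarrow\) (2) computation. The only genuine subtlety I expect is in the (2) \(\Rightarrow\) (1) step: one must use nets or cluster points rather than subsequences, since \(X\) need not be metrisable. The quantifier order in (1) should also be read as requiring \(k\) to be independent of \(w\), which is exactly what the contradiction argument delivers.
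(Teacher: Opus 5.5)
Your proof is correct. The paper does not prove this statement itself. It quotes \cite[Theorem 10.6]{EFHN15}, where, as I recall it, the result is obtained inside abstract mean ergodic theory for the Koopman operator $f \mapsto f\circ\sigma$ on $C(X)$. Roughly, unique ergodicity means the fixed space of the adjoint is one-dimensional, and the constant functions are fixed and separate it. The mean ergodic theorem then gives norm convergence of the Ces\`aro means, i.e.\ uniform convergence of the time-averages, to $\nu(f)\bbo$. You argue directly instead. For (1)$\Rightarrow$(2) you use norm continuity of $\nu$. For (2)$\Rightarrow$(1) you take a Krylov--Bogolyubov type weak-$*$ cluster point of the empirical functionals $\mu_k$. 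It is invariant by the telescoping bound, and it differs from $\nu$ because the weak-$*$ closed set $\{\mu : |\mu(f)-\nu(f)|\geq\epsilon\}$ contains every $\mu_k$. Your route is self-contained and needs only Banach--Alaoglu. It also fits the paper's measures-as-functionals formalism with no metrisability assumption, and you rightly use cluster points rather than subsequences. The cited route instead places the statement alongside the mean ergodicity and power convergence notions that the paper later uses for $T$. Reading (1) as uniform in $w$, with $c(f)$ independent of $w$, is the intended interpretation. With only pointwise convergence to a constant, your uniqueness step would need dominated convergence (via the Riesz representation) in place of norm continuity of $\nu$. The rest of your argument would go through unchanged.
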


A letter \(a \in \A\) is called \textbf{isolated} if \(\{a\}\) is open, otherwise it is called an \textbf{accumulation point}. If \(a\) is isolated then the function \(f \colon X \to \R\), given by \(f(w) = 1\) if \(w_0 = a\) and \(f(w) = 0\) otherwise, is continuous (since it is the composition \(\chi_a \circ \pi_0\) of the projection \(\pi_0 = w \mapsto w_0 \in \A\) followed by the characteristic function \(\chi_a \colon \A \to \R\)). Thus, when \(X\) is uniquely ergodic, \(a\) appears with a well-defined `frequency' \(\nu(f)\) to which the time-averages converge uniformly across \(w \in X\). Generally, we can imagine calculation of averages for other continuous quantities on \(X\), those that take on similar values when words agree to a large distance about index \(0\), up to small perturbations of the letters.

\subsection{The substitution operator} \label{sec:sub operators}
Finally, we must introduce the substitution operator of a substitution, which plays a similar role to the (transpose of) the substitution matrix from the finite alphabet setting. Throughout, we let \(E = C(\A)\) denote the Banach lattice of continuous functions \(f \colon \A \to \R\) (as above, with supremum norm), and \(K\) its positive cone of functions \(f(a) \geq 0\) for all \(a \in \A\). We let \(K_{>0} \subset K\) denote those functions \(f(a) > 0\) for all \(a \in \A\). The \textbf{substitution operator} \(M \colon E \to E\) is given by
\[
(M f)(a) \coloneqq \sum_{b \triangleleft \sub(a)} f(a) ,
\]
where here (and generally) a sum as the above is over all letters of \(\sub(a)\) \emph{with multiplicity}. For example, for the periodic-doubling substitution with \(\A = \{a,b\}\), \(\sub(a) = ab\) and \(\sub(b) = aa\), given \(f \in E\) the new function \(Mf \colon \A \to \R\) is given by \(a \mapsto f(a) + f(b)\) and \(b \mapsto 2 f(a)\).

Clearly, \(M\) is linear and positive (\(M(K) \subseteq K\)). It also is not difficult to show \cite[Lemma 4.5]{MRW25} that for any \(k \in \N_0\), the substitution operator of \(\sub^k \colon \A \to \A^\ast\) is given by \(M^k\). By positivity, \(\|M^k\| = \|M^k(\bbo)\| = \max_{a \in \A} \# \sub^k(a) < \infty\), and \(M\) is bounded. By Gelfand's Formula, the spectral radius \(r \coloneqq r(M)\) of \(M\) is given by
\[
r \coloneqq r(M) = \lim_{n \to \infty} \sqrt[n]{\| M^n \|} = \inf_{n \to \infty} \sqrt[n]{\| M^n \|} = \inf_{n \to \infty} \sqrt[n]{ \max_{a \in \A} \# \sub^n(a) } .
\]
It is not hard to show that, for all \(n \in \N\), we have the lower bound \( r^n \geq \min_{a \in \A} \# \sub^n(a)\) (see \cite[Lemma 4.23]{MRW25}). We let \(T \coloneqq \frac{1}{r} M\) denote the \textbf{normalised substitution operator}.

It should be noted that, of course, \(M\) does not depend on the ordering of letters in the superwords defining a substitution. That is, it only depends on substitution considered as a mapping of finite multisets (rather than ordered words), or the `Abelianisation' of \(\sub\). One neat way of making this explicit is by consideration of the dual operator \(M' \colon E' \to E'\). Here, \(E'\) is the continuous dual of \(E\), that is, the Banach space of continuous linear maps \(\nu \colon E \to \R\), with norm \(\|\nu(f)\| \coloneqq \sup_{\|f \leq 1\|} |\nu(f)|\), and \(M'\) is defined by \( (M'\nu)(f) \coloneqq \nu(M f)\). For \(a \in \A\), consider the associated Dirac delta \(\delta_a \in E'\), given by \(\delta_a(f) \coloneqq f(a)\). Suppose we defined a map on the set of these Dirac-deltas where, for \(\sub(a) = a_1 a_2 \cdots a_n \in \A^n\), we let
\[
\delta_a \mapsto \delta_{a_1} + \delta_{a_2} + \cdots + \delta_{a_n} .
\]
This faithfully (and topologically) realises the `Abelianisation' of \(\sub\) as a map of multisets i.e., where each \(a \in \A\) maps to the unordered set of \(a_i\), with multiplicities. Then \(M'\) is the natural linearisation and topological completion of this map to all of \(E'\), and an analogue of the standard substitution matrix from the finite alphabet setting. And just as for the finite case, the positive eigenvectors are closely related to the invariant measures of the system \(X\), see \cite[Section 5]{MRW25} or the Appendix. Regularity properties of \(T\) will be important for this, such as strong power convergence, but we delay introduction of these operator-theoretic properties until they are needed.

\section{Necessary conditions for unique ergodicity} \label{sec:necessary UE conditions}

A function \(\ell \in K\) is called a \textbf{natural length function} (or \textbf{NLF}, for short) if \(\ell \neq 0\) and there exists some \(\lambda \geq 0\) with \(M \ell = \lambda \ell\). That is, \(\ell\) is a positive eigenvector of \(M\). When \(\sub\) is irreducible, necessarily \(\lambda = r\) i.e., \(M \ell = r\ell\) or, dividing by \(r\), \(T \ell = \frac{1}{r} M \ell = \ell\), that is, \(\ell\) is a fixed point of \(T\). The proof of this is straightforward, see \cite[Theorem 4.26]{MRW25} for the following:

\begin{lemma}\label{lem:NLF unique and positive}
Suppose that \(\sub\) is irreducible and has natural length function \(\ell \in K\). Then \(\ell \in K_{>0}\), is a fixed point of \(T\) and every other natural length function is a scalar multiple of \(\ell\). Moreover, \(\ell\) is the only fixed point of \(T\) (whether positive or not), up to rescaling.
\end{lemma}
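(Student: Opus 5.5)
Suppose \(\sub\) is irreducible with natural length function \(\ell\), so \(M\ell = \lambda\ell\) with \(\ell \in K\setminus\{0\}\). I will first show that \(\ell\) is strictly positive, then that \(\lambda = r\), and finally the uniqueness statements.

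\emph{Positivity.} Let \(Z \coloneqq \{a \in \A \mid \ell(a) = 0\}\); this is closed by continuity of \(\ell\), and \(Z \neq \A\) since \(\ell \neq 0\). If \(a \in Z\) then \(0 = \lambda\ell(a) = \sum_{b \triangleleft \sub(a)} \ell(b)\), and since every term is non-negative, each letter \(b\) of \(\sub(a)\) satisfies \(\ell(b)=0\). Thus \(\sub(Z) \subset Z^\ast\), so irreducibility forces \(Z = \emptyset\), and \(\ell \in K_{>0}\). By compactness, \(0 < m \leq \ell \leq m'\) for constants \(m\), \(m'\).

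\emph{The eigenvalue.} Iterating the eigen-equation gives \(M^n\ell = \lambda^n\ell\). Since \(m\bbo \leq \ell \leq m'\bbo\) and \(M^n\) is positive, \(m M^n\bbo \leq \lambda^n m' \bbo\) and \(\lambda^n m \bbo \leq m' M^n\bbo\). Taking maxima, \(\max_a \#\sub^n(a) = \|M^n\bbo\|\) is comparable to \(\lambda^n\) up to the constant \(m'/m\) in both directions, so Gelfand's formula yields \(r = \lambda\). Hence \(T\ell = \ell\). (Here \(\lambda > 0\): if \(\lambda = 0\) then \(M\ell = 0\) with \(\ell > 0\) would force every \(\sub(a)\) to be empty, contradicting the running assumption that \(X \neq \emptyset\).)

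\emph{Uniqueness.} Let \(g \in E\) satisfy \(Tg = g\); it suffices to show \(g \in \R\ell\). This also covers other natural length functions, since by the argument above they are fixed points of \(T\). Set \(t \coloneqq \max_a g(a)/\ell(a)\), which is attained by compactness, and put \(h \coloneqq t\ell - g\). Then \(h \in K\), \(Th = h\), and \(h\) vanishes somewhere. If \(h \neq 0\), then \(h\) is a natural length function in its own right (\(Mh = rh\), \(h \in K\), \(h \neq 0\)), so the positivity argument applies to \(h\) and gives \(h > 0\) everywhere, contradicting that it vanishes somewhere. Hence \(h = 0\) and \(g = t\ell\).

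I expect no serious obstacle. The point needing care is that the positivity argument only needs a non-negative, non-zero eigenvector for some eigenvalue \(\lambda \geq 0\); this is exactly what lets it be reused for \(h\) in the uniqueness step.
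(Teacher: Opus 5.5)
Your proof is correct and complete. It rests on three points: the zero set \(\ell^{-1}(0)\) is closed and \(\sub\)-invariant, so irreducibility forces \(\ell \in K_{>0}\); the comparison \(m\bbo \le \ell \le m'\bbo\) together with Gelfand's formula gives \(\lambda = r\); and subtracting \(g\) from the smallest multiple \(t\ell\) dominating it reduces uniqueness back to the positivity step.

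The paper gives no in-text proof of this lemma. It only calls it straightforward and cites \cite[Theorem 4.26]{MRW25}, so your argument serves as a self-contained replacement for that citation. The same comparison \(c_1\bbo \le \ell \le c_2\bbo\) is what the paper itself relies on later, for instance in proving (5) \(\Rightarrow\) (6) of Theorem \ref{thm:equivalence of operator conditions}.
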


Recall that we assume throughout that \(\sub\) generates arbitrarily large words, that is, for all \(k \in \N\) there exists some \(n \in \N\) and \(a \in \A\) with \(\# \sub^n(a) \geq k\). This does not imply that we can find \(n\) large enough so that \(\# v \geq k\) for all \(n\)-superwords; indeed, it is even possible that every sequence \(\# \sub^n(a)\) is bounded, for any \emph{fixed} letter \cite[Example 3.12]{MRW25}. However, irreducibility assures that all superwords grow together in the following sense, see \cite[Proposition 4.27]{MRW25}.

\begin{lemma}\label{lem:irreducible=>letter growth}
If \(\sub\) is irreducible then, for all \(k \in \N\), then there exists some \(n \in \N\) so that, for all \(a \in \A\) we have \(\# \sub^n(a) \geq k\).
\end{lemma}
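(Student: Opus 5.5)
The plan is a compactness argument that turns the absence of a uniform growth time into a closed subset $\mathcal{B} \subseteq \A$ on which $\sub$ restricts, which irreducibility forbids. Fix $k \in \N$. For $n \in \N_0$ set
\[
F_n \coloneqq \{ a \in \A \mid \# \sub^n(a) < k \}.
\]
Each $F_n$ is clopen, since $a \mapsto \# \sub^n(a)$ is continuous into the discrete space $\N_0$; in particular each $F_n$ is compact. Suppose, for a contradiction, that for every $n$ some $a \in \A$ has $\# \sub^n(a) < k$, so that every $F_n$ is non-empty.

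We may assume $\sub$ is non-erasing. (If letters could be erased, one would run the same argument with the length bound replaced accordingly; the essential case is the non-erasing one, which the paper treats as standard.) Then $\# \sub^{n+1}(a) \geq \# \sub^n(a)$ fails in general, but $\# \sub^{n+1}(a) = \sum_{b \triangleleft \sub(a)} \# \sub^n(b) \geq \# \sub^n(b)$ for each letter $b$ of $\sub(a)$. Applying this with the roles shifted, $\# \sub^{n+1}(a) \geq \# \sub^n(c)$ for every letter $c$ of $\sub(a)$; this will give us the invariance we need.

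First, the sets $F_n$ are nested decreasing. Indeed, if $\# \sub^{n+1}(a) < k$, write $\sub^{n+1}(a) = \sub^n(\sub(a))$; since $\sub$ is non-erasing, $\sub(a)$ has a first letter $c$. Rather than comparing through $c$, it is cleaner to write $\sub^{n+1}(a) = \sub(\sub^n(a))$, which has length at least $\# \sub^n(a)$ because every letter produces at least one letter. Hence $a \in F_n$, and so $F_{n+1} \subseteq F_n$. By compactness, $\mathcal{B} \coloneqq \bigcap_n F_n$ is non-empty and closed, and it consists exactly of the letters with $\# \sub^n(a) < k$ for all $n$.

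Second, $\mathcal{B}$ is $\sub$-invariant. If $a \in \mathcal{B}$ and $c \triangleleft \sub(a)$, then for every $n$ we have $\# \sub^n(c) \leq \# \sub^{n+1}(a) < k$, so $c \in \mathcal{B}$. Thus $\sub(\mathcal{B}) \subseteq \mathcal{B}^\ast$ with $\mathcal{B}$ non-empty and closed. If $\mathcal{B} \neq \A$, this contradicts irreducibility directly.

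The main obstacle is the remaining case $\mathcal{B} = \A$. There the lengths $\# \sub^n(a)$ are bounded by $k$ uniformly over all $a$ and $n$. This contradicts the running assumption that $\sub$ generates arbitrarily long words, which requires some $a$ and $n$ with $\# \sub^n(a) \geq k$. Therefore some $F_n$ is empty, which gives the required $n$.
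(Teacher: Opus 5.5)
Your argument is correct and self-contained. The paper gives no proof here; it imports the statement from \cite[Proposition 4.27]{MRW25}, so there is no in-text argument to compare against. Your strategy is to collect the `slow' letters into a non-empty closed set, show it is \(\sub\)-invariant, and let irreducibility together with the standing assumption of arbitrarily long generated words rule it out. This is the same device the paper uses in the next lemma, via the orbit closure \(\mathcal{O}\). The following steps are all fine:
\begin{enumerate}
\item the nesting \(F_{n+1} \subseteq F_n\);
\item the finite intersection property giving \(\mathcal{B} \neq \emptyset\);
\item the invariance bound \(\# \sub^n(c) \leq \# \sub^{n+1}(a)\) for \(c \triangleleft \sub(a)\);
\item the two cases \(\mathcal{B} \neq \A\) and \(\mathcal{B} = \A\).
\end{enumerate}

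The one step you must repair is the reduction to the non-erasing case, which is load-bearing. Without it the \(F_n\) need not be nested, and then \(\bigcap_n F_n\) can be empty even though every \(F_n\) is non-empty. Your parenthetical about `replacing the length bound accordingly' is not an argument.

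No reduction is actually needed, because irreducibility forces \(\sub\) to be non-erasing. The set \(\A_0 = \{a \in \A \mid \sub(a) = \varepsilon\}\) is clopen. Since \(\varepsilon \in \A_0^\ast\), it trivially satisfies \(\sub(\A_0) \subseteq \A_0^\ast\). Irreducibility therefore gives \(\A_0 = \emptyset\) or \(\A_0 = \A\), and the latter contradicts the standing assumption. This is the fact the paper quotes as `irreducible substitutions are not erasing' in the proof of the following lemma.

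Two smaller fixes. First, delete the sentence claiming \(\# \sub^{n+1}(a) \geq \# \sub^n(a)\) `fails in general'. For a non-erasing substitution it holds, and you use exactly this inequality one paragraph later. Second, the lemma asks for \(n \in \N\) rather than \(n \in \N_0\). Nesting handles this: if \(F_n = \emptyset\) then \(F_{n+1} = \emptyset\) too.
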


Define the language \(\lang(X) \subseteq \A^\ast\) of the subshift to be the set of finite words that appear, that is \(\lang(X) = \{u \in \A^\ast \mid u \triangleleft w \text{ for some } w \in X\}\). By definition, \(\lang(X) \subseteq \lang(\sub)\). Subshifts defined by irreducible substitutions have `full' languages:

\begin{lemma}
If \(\sub\) is irreducible then \(\lang(X) = \lang(\sub)\).
\end{lemma}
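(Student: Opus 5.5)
Since \(\lang(X) \subseteq \lang(\sub)\) always holds, we only need the reverse inclusion. Moreover, \(\lang(X)\) is closed in \(\A^\ast\). Indeed, suppose \(u^{(k)} \to u\) with \(u^{(k)} \triangleleft w^{(k)} \in X\). After shifting, we may assume each \(u^{(k)}\) sits at a fixed position, say starting at index \(0\). A convergent subnet of \(w^{(k)}\) in the compact space \(X\) then has a limit \(w\) with \(w_{[0..\#u)} = u\). It therefore suffices to show that every generated word \(u\) lies in \(\lang(X)\). Here we use that \(\lang(\sub)\) is the closure of the generated words.

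Fix a generated word \(u \triangleleft \sub^m(a)\). The plan is to show that \(u\) occurs in a word of \(X\) by placing \(a\) inside arbitrarily long legal words and applying \(\sub^m\). The key step is to show that \(a \in \lang(X)\) for every \(a \in \A\), i.e. that the set \(\mathcal{B} \coloneqq \{b \in \A \mid b \triangleleft w \text{ for some } w \in X\} = \pi_0(X)\) is all of \(\A\). Since \(\pi_0\) is continuous and \(X\) is compact, \(\mathcal{B}\) is closed, and it is non-empty because \(X \neq \emptyset\). We next check that \(\sub(\mathcal{B}) \subset \mathcal{B}^\ast\). Take \(b = w_0\) with \(w \in X\), and consider the bi-infinite word \(\sub(w)\), obtained by concatenation with \(\sub(w_0)\) placed at index \(0\). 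Its letters are letters of \(\sub(b)\), and every finite subword of it is contained in \(\sub(v)\) for some finite subword \(v \triangleleft w\), where \(v \in \lang(\sub)\). Since \(\sub\) maps generated words to generated words and is continuous on \(\A^\ast\), it maps \(\lang(\sub)\) into itself, and \(\lang(\sub)\) is closed under taking subwords. Hence \(\sub(w) \in X\), so the letters of \(\sub(b)\) lie in \(\mathcal{B}\). Irreducibility then forces \(\mathcal{B} = \A\).

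One subtlety is that \(\sub(w)\) must be bi-infinite on both sides. If \(\sub\) is erasing, or if letters on one side could all map to empty words, this could fail. I expect this to be the main obstacle. I will handle it with Lemma \ref{lem:irreducible=>letter growth}: replace \(\sub\) by a power \(\sub^n\) with \(\#\sub^n(c) \geq 1\) for all \(c\). A closed set invariant under \(\sub^n\) need not be invariant under \(\sub\), so we do not apply irreducibility of \(\sub^n\) directly. Instead, we observe that \(\sub(w)\) is infinite in both directions whenever \(\sub^n(w)\) is, because \(\sub^n(w) = \sub^{n-1}(\sub(w))\). Since \(\sub^n\) is non-erasing, \(\sub^n(w)\) is bi-infinite, so \(\sub(w)\) has infinitely many letters on each side and is a genuine element of \(\A^\Z\).

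Finally, given \(u \triangleleft \sub^m(a)\), pick \(w \in X\) with \(w_0 = a\), which is possible since \(\mathcal{B} = \A\). By the argument above, \(\sub^m(w) \in X\), and this word contains \(\sub^m(a)\), hence \(u\). So \(u \in \lang(X)\). Taking closures gives \(\lang(\sub) \subseteq \lang(X)\).
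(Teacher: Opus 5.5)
Your proof is correct, but it takes a different route from the paper's.

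The paper cites a compactness criterion, \cite[Proposition 3.16]{MRW25}: it suffices that for each $n$ and each non-empty open $U$ there is a generated word of length $2n+1$ whose central letter lies in $U$. It then applies irreducibility to the orbit closure $\overline{O(b)}$ of a single letter $b$, which gives some $N$ such that $\sub^N(b)$ contains a letter of $U$. Finally it pushes a generated word $ubv$ forward by $\sub^N$.

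You instead prove two elementary facts directly: $\lang(X)$ is closed (via subnets, which is the right tool since $\A$ need not be metrisable), and $X$ is stable under $\sub$. You then apply irreducibility only once, to the closed set $\pi_0(X)$ of letters occurring in $X$. This gives $\pi_0(X)=\A$, after which every superword, and hence every generated word, occurs in some $\sub^m(w)\in X$.

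Your version is more self-contained, since it replaces the cited criterion with short compactness arguments. It also yields the structural facts $\sub(X)\subseteq X$ (up to shift) and $\pi_0(X)=\A$ along the way. The paper's version works purely with finite generated words and never has to make sense of $\sub$ applied to a bi-infinite word.

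Your handling of the bi-infiniteness of $\sub(w)$, by passing through $\sub^n(w)=\sub^{n-1}(\sub(w))$, is valid but more roundabout than needed. Irreducibility already forces $\sub$ to be non-erasing: if $\sub(c)=\varepsilon$, then $\{c\}$ would be a closed $\sub$-invariant proper subset. The paper notes this in passing, in the remark that ``irreducible substitutions are not erasing''.
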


\begin{proof}
A compactness argument \cite[Proposition 3.16]{MRW25} shows it is sufficient that, for all \(n \in \N\) and all non-empty open subsets \(U \subseteq \A\), there is a word of length \(2n+1\) generated by \(\sub\) whose central letter is in \(U\). This holds for irreducible substitutions. Indeed, first take any generated word \(u b v \in \A^{2n+1}\), where \(u\), \(v \in \A^n\) and \(b \in \A\) (which exists, by Lemma \ref{lem:irreducible=>letter growth}). Now, for some sufficiently large \(N \in \N\) we have \(\sub^N(b)\) contains a letter of \(U\). Indeed, otherwise, consider its `orbit'
\[
O = O(b) \coloneqq \{ a \in \A \mid a \triangleleft \sub^i(b) \text{ for some } i \in \N_0\} .
\]
Clearly, \(O\) is closed under substitution (that is, \(\sub(O) \subseteq O\)), and thus so is its topological closure \(\mathcal{O} \coloneqq \overline{O}\), by continuity. Moreover, since \(O\) is a subset of the closed subset \(\A \setminus U\), the same is true of \(\mathcal{O}\), so \(\mathcal{O} \neq \A\) (and obviously \(\mathcal{O} \neq \emptyset\)), contradicting irreducibility. So, for some \(N \in \N_0\) and \(a \in U\) we have \(a \triangleleft \sub^N(b)\). Since \(\# \sub^N(u)\), \(\# \sub^N(v) \geq n\) (irreducible substitutions are not erasing), the generated word \(\sub^N(ubv) = \sub^N(u) \sub^N(b) \sub^N(v)\) contains a length \(2n+1\) word centred on \(a \in U\).
\end{proof}

Since every superword is generated, and thus legal, in particular we have the following:

\begin{corollary}\label{cor:language contains superwords}
If  \(\sub\) is irreducible then, for all \(a \in \A\) and \(n \in \N_0\), we have \(\sub^n(a) \in \lang(X)\), so there exists some \(w \in X\) with \(w_I = \sub^n(a)\), where \(I = [0,\# \sub^n(a))\).
\end{corollary}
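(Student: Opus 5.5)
This follows directly from the preceding lemma, which shows that \(\lang(X) = \lang(\sub)\) when \(\sub\) is irreducible; the only point to check is that every superword is legal.

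First, every superword is generated, because \(\sub^n(a) \triangleleft \sub^n(a)\). Every generated word lies in its own topological closure, so \(\sub^n(a) \in \lang(\sub)\). By the lemma, \(\lang(\sub) = \lang(X)\), and therefore \(\sub^n(a) \in \lang(X)\).

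By the definition of \(\lang(X)\), there is some \(w' \in X\) and some finite interval \(J = [m..m+k)\), where \(k = \# \sub^n(a)\), with \(w'_J = \sub^n(a)\). Since \(X\) is \(\sigma\)-invariant, we may set \(w \coloneqq \sigma^m w' \in X\). Then \(w_i = w'_{i+m}\), so \(w_{[0,k)} = w'_J = \sub^n(a)\), as required.

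There is no real obstacle here. The only care needed is the trivial case \(n = 0\), where the superword is the single letter \(a\). This is still a generated word, since \(a \triangleleft \sub^0(a)\), so the argument goes through unchanged. The remaining step is the bookkeeping of the shift direction, using that \(\sigma\) is the left shift.
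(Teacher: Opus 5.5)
Your proposal is correct and follows the paper's route: superwords are generated, hence lie in \(\lang(\sub)\), which equals \(\lang(X)\) by the preceding lemma. Your shift step \(w = \sigma^m w'\), which moves the occurrence to \([0,\#\sub^n(a))\), is the bookkeeping the paper leaves implicit.
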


We now give some simple growth properties of superwords that must hold if \(X\) is uniquely ergodic, firstly when \(\sub\) admits a NLF, and then a weaker result when this assumption is dropped.

\begin{proposition}\label{prop:UE superword growth}
Suppose that \(\sub\) is irreducible, admits a natural length function \(\ell\) and that \(X\) is uniquely ergodic. Then, following a suitable a rescaling of \(\ell\), for all \(\epsilon > 0\) there exists some \(N = N(\epsilon) \in \N\) so that, for all \(n \geq N\),
\begin{equation}
\frac{r^n \ell(a)}{ \# \sub^n(a) } = 1 \pm \epsilon .
\end{equation}
\end{proposition}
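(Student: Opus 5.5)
The plan is to apply the uniform ergodic theorem (Theorem \ref{thm:UE characterisation}) to the continuous function \(f \in C(X)\) given by \(f(w) \coloneqq \ell(w_0)\). This is continuous because it is \(\ell \circ \pi_0\). By Lemma \ref{lem:NLF unique and positive}, \(\ell \in K_{>0}\) and \(M\ell = r\ell\), so \(M^n \ell = r^n \ell\). Written out letter by letter, this says that for every \(a \in \A\),
\[
\sum_{b \triangleleft \sub^n(a)} \ell(b) = r^n \ell(a).
\]

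Next, fix \(a\) and \(n\). By Corollary \ref{cor:language contains superwords} there is some \(w \in X\) with \(w_I = \sub^n(a)\), where \(I = [0,\#\sub^n(a))\). The time average of \(f\) over \(I\) is then exactly
\[
\avg^I_w(f) = \frac{1}{\#\sub^n(a)} \sum_{b \triangleleft \sub^n(a)} \ell(b) = \frac{r^n \ell(a)}{\#\sub^n(a)}.
\]
By unique ergodicity, \(\avg^m_w(f) \to c(f) = \nu(f)\) uniformly in \(w \in X\). So for every \(\delta>0\) there is a \(k\) with \(|\avg^m_w(f) - c(f)| \le \delta\) for all \(m \ge k\) and all \(w\). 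Uniformity in \(w\) is essential here, because the witness \(w\) depends on \(a\) and \(n\). This is the standard content of Oxtoby's theorem; statement (1) of Theorem \ref{thm:UE characterisation} should be read with \(k\) independent of \(w\).

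It remains to make the length \(\#\sub^n(a)\) large uniformly in \(a\). Lemma \ref{lem:irreducible=>letter growth} gives some \(n_0\) with \(\#\sub^{n_0}(a) \ge k\) for all \(a\). I then need this for all \(n \ge N\), not just for a single \(n_0\). Irreducible substitutions are non-erasing, so \(\#\sub^{n}(a) \ge \#\sub^{n_0}(b)\) for \(n \ge n_0\), where \(b\) is any letter of \(\sub^{n-n_0}(a)\). Hence \(\#\sub^n(a) \ge k\) for all \(n \ge n_0\) and all \(a\).

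Consequently, \(r^n\ell(a)/\#\sub^n(a) = c(f) \pm \delta\) for all \(n \ge N\) and all \(a\). I will check that \(c(f) > 0\). Since \(\ell\) is strictly positive on compact \(\A\), we have \(f \ge \min \ell > 0\), so \(c(f) \ge \min\ell > 0\). Rescaling \(\ell\) by \(1/c(f)\) (still an NLF) and taking \(\delta = \epsilon\, c(f)\) then gives the claim.

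The main obstacle is the uniformity bookkeeping: the convergence in Theorem \ref{thm:UE characterisation} must be uniform over \(w \in X\), and the superword lengths must tend to infinity uniformly over \(a\) for all large \(n\). Both are handled above. Everything else is the exact identity linking the time average over a superword to \(M^n\ell = r^n\ell\).
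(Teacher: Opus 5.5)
Your proposal is correct and follows essentially the same route as the paper: lift \(\ell\) to \(f(w)=\ell(w_0)\), realise each superword inside some \(w\in X\) via Corollary~\ref{cor:language contains superwords}, identify that time-average with \(r^n\ell(a)/\#\sub^n(a)\) using \(M^n\ell=r^n\ell\), and combine uniform convergence of time-averages, uniform growth of superwords and \(c(f)\ge\min\ell>0\) to rescale \(\ell\). Your extra justification that \(\#\sub^n(a)\ge k\) for \emph{all} \(n\ge n_0\), using that irreducible substitutions are non-erasing, fills in a step the paper leaves implicit when it invokes Lemma~\ref{lem:irreducible=>letter growth}.
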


\begin{proof}
Let \(\epsilon > 0\) and define \(f \colon X \to \R\) by \(f(w) \coloneqq \ell(w_0)\), which is continuous by continuity of \(\ell\) (and the projection \(w \mapsto w_0 \in \A\)). Since \(\inf_{a \in \A} \ell(a) > 0\), by irreducibility (Lemma \ref{lem:NLF unique and positive}), clearly \(c = c(f) > 0\) in Theorem \ref{thm:UE characterisation}. Dividing \(\ell\) by \(c\) if necessary (which is obviously still a NLF), we may assume that \(f\) has time-averages uniformly converging to \(1\), say within \(\epsilon\) of \(1\) for all intervals of length at least \(k = k(\epsilon)\). By Lemma \ref{lem:irreducible=>letter growth}, there exists \(N \in \N\) for which, for all \(n \geq N\) and \(a \in \A\), we have \(\# \sub^n(a) \geq k\).

By Corollary \ref{cor:language contains superwords}, for any \(n \in \N\) and \(a \in \A\), there exists some \(w_a \in X\) with \((w_a)_{[0,\# \sub^n(a))} = \sub^n(a)\). Then the time-average characterisation of unique ergodicity gives, as required,
\begin{equation}
\frac{1}{\# \sub^n(a)} [r^n \ell(a)] = \frac{1}{\# \sub^n(a)} [(M^n \ell)(a)] = \frac{1}{\# \sub^n(a)} \left[ \sum_{b \triangleleft \sub^n(a)} \ell(b) \right] = \avg_{w_a}^{\# \sub^n(a)}(f) = 1 \pm \epsilon .
\end{equation}
\end{proof}

As we will see in Theorem \ref{thm:main UE}, it would be incredibly useful to know if irreducibility with unique ergodicity implies the existence of a NLF. Even if not, we may at least show the base of the exponential part of the growth has constant \(r\) across the alphabet:

\begin{proposition}\label{prop:superword growth without NLF}
Suppose that \(\sub\) is irreducible and \(X\) is uniquely ergodic. Then, for all \(0 < \epsilon < r\), there exists some \(N = N(\epsilon)\) so that, for all \(a \in \A\) and \(n \geq N\),
\[
(r-\epsilon)^n \leq \# \sub^n(a) \leq (r+\epsilon)^n .
\]
\end{proposition}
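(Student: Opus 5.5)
The upper bound does not need unique ergodicity. By positivity, \(\max_{a \in \A} \# \sub^n(a) = \|M^n\|\), and Gelfand's Formula gives \(\sqrt[n]{\|M^n\|} \to r\). So \(\# \sub^n(a) \leq (r+\epsilon)^n\) for all \(a\) once \(n\) is large. The real content is the lower bound \(\min_{a} \# \sub^n(a) \geq (r-\epsilon)^n\). My plan is to use unique ergodicity to show that the one-step growth ratios of superwords are asymptotically independent of the seed letter. Those ratios can then be compared with the maximal growth, which is governed by \(r\).

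\emph{Step 1: uniform growth ratios.} Fix \(k \in \N\) and define \(g_k \colon X \to \R\) by \(g_k(w) \coloneqq \# \sub^k(w_0)\). This is continuous because \(\# \circ \sub^k\) is locally constant on \(\A\). By Theorem \ref{thm:UE characterisation} there is a constant \(c_k\) such that all time-averages of \(g_k\) over intervals of length at least some \(K(\delta)\) equal \(c_k \pm \delta\). By Lemma \ref{lem:irreducible=>letter growth}, choose \(N_0\) so that \(\# \sub^n(a) \geq K(\delta)\) for all \(n \geq N_0\) and all \(a\). Corollary \ref{cor:language contains superwords} then places \(\sub^n(a)\) inside some \(w_a \in X\). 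Averaging \(g_k\) over that occurrence, as in the proof of Proposition \ref{prop:UE superword growth}, gives
\[
\frac{\# \sub^{n+k}(a)}{\# \sub^n(a)} = \frac{1}{\# \sub^n(a)} \sum_{b \triangleleft \sub^n(a)} \# \sub^k(b) = c_k \pm \delta \quad \text{for all } a \in \A,\ n \geq N_0 .
\]
Iterating this gives, for all \(a\) and \(m \in \N\),
\[
(c_k-\delta)^m \# \sub^{N_0}(a) \leq \# \sub^{N_0+mk}(a) \leq (c_k+\delta)^m \# \sub^{N_0}(a).
\]

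\emph{Step 2: identify \(c_k\) with \(r^k\).} Take the maximum over \(a\) in the upper bound and use \(\|M^j\| \geq r^j\) for all \(j\). This gives \(r^{N_0+mk} \leq (c_k+\delta)^m \|M^{N_0}\|\). Taking \(m\)-th roots and letting \(m \to \infty\) yields \(c_k + \delta \geq r^k\). Substituting into the lower bound, for every \(a\),
\[
\# \sub^{N_0+mk}(a) \geq (r^k - 2\delta)^m .
\]
Here I choose \(\delta < r^k/2\); this is possible since \(r \geq 1\), because irreducible substitutions are non-erasing.

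\emph{Step 3: choosing parameters and filling gaps.} Given \(\epsilon\), first pick \(k\) and then \(\delta\) so that \((r^k-2\delta)^{1/k} \geq r - \epsilon/2\). For a general \(n\), write \(n = N_0 + mk + j\) with \(0 \leq j < k\). Non-erasingness gives \(\# \sub^n(a) \geq \min_b \# \sub^{N_0+mk}(b) \geq (r-\epsilon/2)^{mk}\). Since \(N_0+j\) is bounded independently of \(n\), the ratio \((r-\epsilon/2)^{mk}/(r-\epsilon)^n\) tends to infinity. Hence \(\# \sub^n(a) \geq (r-\epsilon)^n\) for all large \(n\).

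The main obstacle is Step 1, the passage from unique ergodicity to seed-independent growth ratios. It relies on two facts: the averaged function \(g_k\) is continuous, and every superword is long enough and legal, so that uniform convergence applies to every seed simultaneously. Once this is in place, Steps 2 and 3 are elementary bookkeeping.
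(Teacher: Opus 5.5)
Your proof is correct and follows essentially the same route as the paper, which averages $w \mapsto \# \sub(w_0)$ (your $g_1$, written there as a weighted sum of indicator functions of the sets $\A_i$) over legal superwords to get uniform one-step ratios $\# \sub^{n+1}(a)/\# \sub^n(a) = \alpha \pm \epsilon$, then iterates and identifies $\alpha = r$ via Gelfand's Formula. Your extra parameter $k$ is unnecessary, since $k=1$ already yields the bound $(r-2\delta)^m$ in Step 2. Your direct Gelfand argument for the upper bound and your explicit handling of the offset $N_0$ and the remainder $j$ are slightly cleaner than the paper's closing remark about taking $C = 1$.
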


\begin{proof}
For \(i \in [1..L]\), consider the indicator functions \(f_i \colon \A \to \R\) given by \(f_i(x) = 1\) for \(x \in \A_i\) and \(f_i(x) = 0\) otherwise. Since \(\A = \A_1 \sqcup \cdots \sqcup \A_L\) is a partition into clopen subsets, each \(f_i\) is continuous. Each \(f_i\) lifts to a continuous function \(F_i \colon X \to \R\), namely, by \(F_i(w) \coloneqq f_i(w_0)\). Thus, by Theorem \ref{thm:UE characterisation}, with respect to the unique invariant measure \(\nu\), the frequencies \(\nu(F_i) \coloneqq p_i\) are well defined and the time-averages \(\avg_w^n(F_i)\) converge uniformly to \(p_i\). Define \(\alpha \coloneqq p_1 + 2p_2 + \cdots + Lp_L\).

By an analogous argument to in the previous result, given \(0 < \epsilon < \alpha\), there exists \(N\) so that, for all \(a \in \A\) and \(n \geq N\), we have
\begin{equation} \label{eq:Ai frequency estimates}
\frac{\# \{b \triangleleft \sub^n(a) \mid b \in \A_i\}}{\# \sub^n(a)} = \frac{1}{\# \sub^n(a)} \sum_{b \triangleleft \sub^n(a)} f_i(b) = p_i \pm \frac{2\epsilon}{L(L+1)} .
\end{equation}
Therefore, we obtain bounds
\[
\# \sub^n(a) \left(p_i - \frac{2\epsilon}{L(L+1)} \right) \leq \# \{b \triangleleft \sub^n(a) \mid b \in \A_i\} \leq \# \sub^n(a) \left(p_i + \frac{2\epsilon}{L(L+1)} \right) .
\]
By definition, each \(b \in \A_i\) substitutes to \(i\) letters. Thus, given some \(a \in \A\), summing the above inequalities over each \(i \in [1..L]\) we obtain
\[
\sum_{i=1}^L i \cdot \#\sub^n(a)\left( p_i - \frac{2\epsilon}{L(L+1)} \right) \leq \# \sub^{n+1}(a) \leq \sum_{i=1}^L i \cdot \# \sub^n(a) \left( p_i + \frac{2\epsilon}{L(L+1)} \right) .
\]
Evaluating the upper and lower limits, \(\# \sub^n(a)(\alpha - \epsilon) \leq \# \sub^{n+1}(a) \leq \# \sub^n(a)(\alpha + \epsilon)\) for all \(n \geq N\). Iterating this from \(n = N\), we obtain, for all \(i \geq 0\),
\[
\# \sub^N(a) (\alpha-\epsilon)^i \leq \sub^{N + i}(a) \leq \# \sub^N(a) (\alpha + \epsilon)^i .
\]
Since \(\# \sub^N(a) \geq 1\), denoting \(C = C(\epsilon) \coloneqq \max_{a \in \A} \# \sub^N(a)\) we have, for all \(a \in \A\) and \(i \geq 0\),
\begin{equation}\label{eq:growth before spectral radius}
(\alpha-\epsilon)^i \leq \# \sub^{N + i}(a) \leq C(\alpha + \epsilon)^i .
\end{equation}
So we only need to show \(\alpha = r\). Supposing that \(\alpha > r\), taking \(\epsilon = (\alpha-r)/2 > 0\) the above implies that, for any \(a \in \A\),
\[
\|M^{N+i}\| \geq \# \sub^{N+i}(a) \geq \left(\alpha - \frac{\alpha-r}{2} \right)^i = (r+\epsilon)^i 
\]
so, taking roots, \( \|M^{N+i}\|^{1/(N+i)} \geq (r+\epsilon)^{i/(N+i)}\), contradicting Gelfand's Formula. Then, by Gelfand's Formula, \(r = \lim_i \|M^{N+i}\|^{1/(N+i)} \geq \lim_i (r+\epsilon)^{i/(N+i)} = r+\epsilon\), a contradiction. Taking \(\alpha < r\) leads to a similar contradiction. Indeed, suppose that \(\alpha < r\) and set \(\epsilon = (r-\alpha)/2\), so that \(\alpha + \epsilon = r - \epsilon\). Inserting this into Equation (\ref{eq:growth before spectral radius}) gives \(\# \sub^{N + i}(a) \leq C(r - \epsilon)^i\). In particular, taking \(a \in \A\) maximising \(\# \sub^{N+i}(a)\), it follows that \(\|M^{N+i}\| \leq C(r - \epsilon)^i\). Taking roots and limits, \(r = \lim_i \|M^{N+i}\|^{1/(N+i)} \leq \lim_i C^{1/(N+i)} (r-\epsilon)^{i/(N+i)} = r-\epsilon\), a contradiction. Thus, \(\alpha = r\) in Equation (\ref{eq:growth before spectral radius}). Since there is a version for all \(0 < \epsilon < r\), we can obviously take \(C = 1\).
\end{proof}

As a corollary of the above, if there exist \(a\), \(b \in \A\) with \(\# \sub^n(a)\) and \(\# \sub^n(b)\) growing with different `exponential parts' under an irreducible substitution, then \(X\) is not uniquely ergodic. We will exploit this later, in demonstrating the existence of a primitive substitution on \(\N_\infty\) with \(X\) not uniquely ergodic.

To conclude this section, we now prove the new direction of Theorem \ref{thm:main UE} from the introduction, which states that if \(\sub\) admits a NLF and \(X\) is uniquely ergodic, then \(T\) is strongly power convergent. The latter means that, for all \(f \in E\), we have that \(T^n f\) converges in \(E\) as \(n \to \infty\), with respect to the supremum norm. That is, there exists some \(g \in E\) for which, for all \(\epsilon > 0\), there exists some \(N \in \N\) so that, for all \(n \geq N\), we have \(\|T^n f - g\| \leq \epsilon\). And we recall the latter means that, for all \(a \in \A\), we have \((T^n f)(a) = g(a) \pm \epsilon\). In the process of the proof, we find that \(g\) is necessarily a multiple of the NLF.

\begin{proof}[Proof of second claim of Theorem \ref{thm:main UE}]
As already noted, the first direction of Theorem \ref{thm:main UE} is shown in \cite{MRW25}; the assumption of primitivity in \cite[Theorem 1.1]{MRW25} can indeed be weakened to irreducibility (by combining Theorem 5.22 and Corollary 5.23 there), although we are not aware of examples that are irreducible, uniformly ergodic but not primitive. A more direct proof of this result is given in the Appendix.

For the second statement, let \(X\) be uniquely ergodic, have a NLF and \(f \in E\) be arbitrary; we aim to show that \(T^n(f)\) is convergent in \(E\), so let \(\epsilon > 0\) be arbitrary. Also let \(\delta = \delta(\epsilon) > 0\), which can be defined at this stage explicitly in terms of \(\epsilon\) and \(\sub\) (but we give that formula later). Without loss of generality, by scaling \(f\), we may assume that \(\|f\| \leq 1\). Define \(F \colon X \to \R\) as its canonical lift i.e., \(F(w) \coloneqq f(w_0)\) and \(c \coloneqq \nu(f)\) where \(\nu\) is the unique invariant probability measure on \(X\).

By definition of the operators,
\begin{equation}\label{eq:UE1}
(T^n f)(a) = \frac{1}{r^n}(M^n(f))(a) = \frac{1}{r^n} \sum_{b \triangleleft \sub^n(a)} f(b) = \left[ \frac{ \# \sub^n(a)}{r^n} \right] \frac{1}{ \# \sub^n(a) } \sum_{b \triangleleft \sub^n(a)} f(b) .
\end{equation}
By Proposition \ref{prop:UE superword growth}, for a suitable scaling of the NLF we have the following for sufficiently large \(n\)
\begin{equation}\label{eq:UE2}
1 = \frac{r^n \ell(a)}{ \# \sub^n(a)} \pm \delta .
\end{equation}
By strict positivity of \(\ell\) and compactness, there are constants \(c_1\) and \(c_2\) with \(c_1 \bbo \leq \ell \leq c_2\bbo \). Thus, \(c_1 |f| \leq c_1 \bbo \leq \ell\) and so, for all \(n \in \N\),
\[
c_1 |T^n f| \leq T^n(c_1 |f|) \leq T^n(\ell) = \ell \leq c_2 \bbo .
\]
Hence, for all \(a \in \A\), we have \(|(T^n f)(a)| \leq c_2 / c_1\). It follows that
\begin{equation}\label{eq:UE3}
(T^n f)(a) = \ell(a) \cdot \left( \frac{1}{ \# \sub^n(a) } \sum_{b \triangleleft \sub^n(a)} f(b) \right) \pm \frac{c_2}{c_1} \delta ,
\end{equation}
by multiplication of Equations (\ref{eq:UE1}) and (\ref{eq:UE2}).

Now, take \(k\) sufficiently large that time-averages of \(F\) over all intervals of length \(k\) are equal to \(c \pm \delta\). We may then find \(N \in \N\) so that, for all \(n \geq N\), Equation (\ref{eq:UE3}) holds and also \(\# \sub^n(a) \geq k\) for all \(a \in \A\). By Lemma \ref{cor:language contains superwords}, for all \(a \in \A\) and \(n \geq N\), we may find some \(w_a \in X\) with \(w_{[0,\# \sub^n(a))} = \sub^n(a)\). For such a word, the time-average \(\avg_{w_a}^{\# \sub^n(a)}(F) = c \pm \delta\) is exactly equal to the average bracketed on the right-hand side of Equation (\ref{eq:UE3}) so, since \(\ell(a) \leq c_2\), we have
\[
(T^n(f))(a) = c \cdot \ell(a) \pm \delta \left( \frac{c_1}{c_2} + c_2 \right) .
\]
Hence, by originally taking \(\delta = \delta(\epsilon) = \epsilon/((c_1/c_2)+c_2)\), this shows \(\|T^n(f) - c\ell\| \leq \epsilon\) for all \(n \geq N\). Since \(f \in E\) and \(\epsilon > 0\) were arbitrary, \(T\) is strongly power convergent.
\end{proof}

\section{A primitive substitution on \(\N_\infty\) that is not uniquely ergodic} \label{sec:non-UE}

Recall the following substitution on \(\A = \N_\infty\) from the introduction:
\[
\sub =
\begin{cases}
0 \mapsto 1 \mapsto 2 ,       & \\
a \mapsto 0 \ (a-2) \ (a+1) & \text{for } a \geq 2 .
\end{cases}
\]
where continuity (and the topology of \(\N_\infty\)) dictates that \(\sub(\infty) = 0 \infty \infty\). The idea is simple: substitutions of finite letters have enough terms in the doldrums of the slow-growing \(0\) and \(1\) letters that words fail to grow at the same rate as \(\infty\), which we prove in the following:

\begin{theorem}\label{thm:non-UE example}
For the above substitution, for all \(a \in \N_0\) there exists a constant \(c = c_a \geq 1\) for which, for all \(n \in \N_0\), we have
\[
\# \sub^n(a) \leq c \alpha^n
\]
where \(\alpha = 39/20 = 1.95 < 2\). In contrast, \(\# \sub^n(\infty) \geq 2^n\). The substitution is primitive. Thus, it provides an example of a primitive substitution on \(\N_\infty\) that does not admit a natural length function and has non-uniquely ergodic subshift \(X\).
\end{theorem}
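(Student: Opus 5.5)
The plan is to prove the two growth estimates first, then primitivity, and then derive the failure of unique ergodicity and of a NLF from the growth mismatch. Write \(x_n(a) \coloneqq \# \sub^n(a) = (M^n \bbo)(a)\).

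\textbf{Lower bound at \(\infty\).} Since \(\sub(\infty) = 0\,\infty\,\infty\), we have \(x_{n+1}(\infty) = x_n(0) + 2x_n(\infty) \geq 2 x_n(\infty)\). Together with \(x_0(\infty) = 1\), this gives \(x_n(\infty) \geq 2^n\).

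\textbf{Upper bound on finite letters.} I would use a supersolution argument. Suppose \(g \colon \N_0 \to [1,\infty)\) is any function, possibly unbounded, with \(\sum_{b \triangleleft \sub(a)} g(b) \leq \alpha g(a)\) for every \(a \in \N_0\). Each \(\sub(a)\) with \(a \in \N_0\) consists only of finite letters, and all coefficients are non-negative. Induction on \(n\) then gives \(x_n(a) \leq \sum_{b \triangleleft \sub^n(a)} g(b) \leq \alpha^n g(a)\), so \(c_a \coloneqq g(a)\) works. For the shape of \(g\), try \(g(a) = \beta^a\) with \(\beta = 5/4\). For \(a \geq 2\) the required inequality reads \(1 + \beta^{a-2} + \beta^{a+1} \leq \alpha \beta^a\), that is, \(\beta^{-2} + \beta + \beta^{-a} \leq \alpha\). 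Here \(\beta^{-2} + \beta = 0.64 + 1.25 = 1.89\), so the inequality holds once \(\beta^{-a} \leq 0.06\), i.e.\ for all \(a \geq a_0\) with \(a_0\) around \(13\). The inequalities at \(0 \mapsto 1\) and \(1 \mapsto 2\) only require \(g(1) \leq \alpha g(0)\) and \(g(2) \leq \alpha g(1)\).

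The \textbf{main obstacle} is the finitely many letters \(2 \leq a < a_0\), where \(\beta^a\) is too small relative to the constant contribution \(g(0)\). I would first try keeping \(g(a) = \beta^a\) for \(a \geq a_0\). The values \(g(0), \ldots, g(a_0 - 1)\) then become unknowns subject to a finite system of linear inequalities, which I would solve (or exhibit explicitly in a table and check by hand or computer). Raising the low values, or using a mixed form such as \(g(a) = \max(\beta^a, D\gamma^a)\), is the fallback if the naive choice fails. The slack \(1.95 - 1.89\) is the room available for this. If no such \(g\) can be found directly, an alternative is to apply the same argument to a power \(\sub^k\) with \(\alpha^k\).

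\textbf{Primitivity.} Consider the graph on \(\N_0\) with an edge \(a \to b\) whenever \(b \triangleleft \sub(a)\). Every \(a \geq 2\) has an edge to \(0\). The path \(0 \to 1 \to 2 \to 3 \to \cdots\) reaches every \(k\), so the graph on \(\N_0\) is strongly connected. It contains the cycles \(0\to1\to2\to0\) of length \(3\) and \(0\to1\to2\to3\to4\to0\) of length \(5\). As \(\gcd(3,5) = 1\), for each \(k\) there is \(m_k\) such that \(k \triangleleft \sub^j(0)\) for all \(j \geq m_k\). Every nonempty open \(U \subseteq \N_\infty\) contains some \(k \in \N_0\). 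Every letter, \(\infty\) included, has \(0\) in \(\sub^i(a)\) for some \(i \in \{1,2,3\}\), since \(0 \mapsto 1 \mapsto 2 \mapsto 0\,0\,3\) and \(\sub(a) \ni 0\) for \(a \geq 2\). Hence every \(n\)-superword with \(n \geq m_k + 3\) contains \(k\), which is primitivity.

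\textbf{Conclusions.} For the NLF, suppose \(\ell\) is one. Primitivity implies irreducibility, so Lemma \ref{lem:NLF unique and positive} gives \(M\ell = r\ell\) and constants \(c_1 \bbo \leq \ell \leq c_2 \bbo\) with \(c_1, c_2 > 0\). Then \(c_1 x_n(a) \leq r^n \ell(a) \leq c_2 x_n(a)\) for every \(a\). Applied to \(\infty\) this forces \(r \geq 2\), while applied to \(0\) it forces \(r \leq \alpha < 2\), a contradiction. For unique ergodicity, \(X\) cannot be uniquely ergodic, since otherwise Proposition \ref{prop:superword growth without NLF} would give \((r-\epsilon)^n \leq x_n(a) \leq (r+\epsilon)^n\) uniformly in \(a\). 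Taking \(\epsilon = (2-\alpha)/3\), the letter \(0\) gives \(r - \epsilon \leq \alpha\), while \(\infty\) gives \(r + \epsilon \geq 2\), which are incompatible.
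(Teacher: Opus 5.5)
Your reduction of the upper bound to a supersolution is the right framework and is essentially the paper's: any \(g \colon \N_0 \to [1,\infty)\) with \(\sum_{b \triangleleft \sub(a)} g(b) \le \alpha g(a)\) for all \(a \in \N_0\) gives \(\# \sub^n(a) \le \alpha^n g(a)\). The lower bound at \(\infty\) is correct. So are the two concluding arguments: no NLF via \(c_1 \bbo \le \ell \le c_2 \bbo\), and no unique ergodicity via Proposition \ref{prop:superword growth without NLF}. Your primitivity argument is also correct, by a different route from the paper's. The paper follows the \((a+1)\)-terms to show that \(\sub^{5+k}(a)\) contains every letter of \([0..2+k]\). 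Your strongly connected graph with cycles of coprime lengths \(3\) and \(5\), together with \(0 \triangleleft \sub^i(a)\) for some \(i \le 3\), works equally well.

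The genuine gap is that you never produce \(g\). That is the whole content of the first claim, and everything after it depends on it. You correctly see that \(g(a) = (5/4)^a\) fails for \(2 \le a \le 12\). After that you only list things you would try (a finite linear system with frozen tail, a maximum of exponentials, a power of \(\sub\)), without exhibiting a solution or proving one exists. This is not routine:

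\begin{itemize}
\item the slack \(\alpha - \min_{\beta}(\beta^{-2} + \beta) \approx 0.06\) is tiny;
\item the summand \(g(0)\) is a fixed cost at every \(a \ge 2\);
\item \(g(1) \le \alpha g(0)\) and \(g(2) \le \alpha^2 g(0)\) cap how fast the low values can climb.
\end{itemize}

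The paper remarks that simple exponential scores only worked after lengthening the initial stalling to \(0 \mapsto \cdots \mapsto 7\).

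The missing idea is to take the extremal choice and impose equality. Set \(s(0) = 1\), \(s(1) = \alpha\), \(s(2) = \alpha^2\) and \(s(a+1) = \alpha s(a) - s(a-2) - 1\) for \(a \ge 2\). Then \(s(\sub(a)) = \alpha s(a)\) exactly, and it remains to prove \(s \ge 1\). First compute \(s(0), \ldots, s(8)\) exactly: all are at least \(1\), with \(s(8) \ge 20\), \(s(8)/s(7) \ge 5/4\) and \(s(7)/s(6) \ge 5/4\). Then induct on the three conditions \(s(a) \ge 20\), \(s(a) \ge \frac{5}{4} s(a-1)\) and \(s(a-1) \ge \frac{5}{4} s(a-2)\). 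They propagate because \(s(a+1) \ge s(a)\bigl(\alpha - (4/5)^2 - 1/20\bigr) = \frac{63}{50} s(a) \ge \frac{5}{4} s(a)\).

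A system of your frozen-tail shape is in fact solvable, e.g.\ by \(\min(s, C(5/4)^a)\) with \(C\) large; for a positive operator, minima of supersolutions are again supersolutions. But seeing this already requires such an \(s\). As written, the proposal does not establish \(\# \sub^n(a) \le c_a \alpha^n\).
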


\begin{proof}
Beginning with the simpler claims, since under each iteration \(\sub\) doubles the number of copies of \(\infty\), it is obvious that \(\# \sub^n(\infty) \geq 2^n\) for all \(n \in \N_0\). To show primitivity (in fact, even for the substitution without the \(a-2\) term), first note that, for any letter \(a \in \A\), we have \(a+1 \triangleleft \sub(a)\), thus for all \(k \in \N_0\) we have \(a+(2+k) \triangleleft \sub^{2+k}(a)\). Since \(a+2+k \in [2..\infty]\), applying substitution a further time gives \(0 \triangleleft \sub^{3+k}(a)\), and another \(i\) times therefore gives \(i \triangleleft \sub^{3+k+i}(a)\). Taking \(i = 0\), \(1\) and \(2\), and \(k = 2-i\) for each, it follows that \(i \triangleleft \sub^5(a)\), that is, for all \(a \in \A\), we have \(\sub^5(a)\) covers the interval \([0..2]\). Then \(\sub^{5+k}(a)\) covers \([0..2+k]\), since under each iteration every letter in the previous step gives an occurrence of its successor, and we also recover a \(0\) letter from the substitution of \(2\) (or any letter bigger than \(2\)).

Primitivity follows. Indeed, let \(U \subseteq \A\) be non-empty and open. Thus, \(U\) contains some \(m \in \N_0\). By the above, for all \(a \in \A\), we have \(m \triangleleft \sub^{5+m}(a)\). That is, all \((5+m)\)-superwords contain a letter in \(U\), as required. If the first claim of the theorem is established, it follows that \(\sub\) is primitive yet \(X\) is not uniquely ergodic, by the contrapositive of Proposition \ref{prop:superword growth without NLF}, since letters in \(\N_0\) have a different exponential growth rate to \(\infty\) under substitution. It also cannot have a NLF, since this would imply that there are constants \(c_1 r^n \leq \# \sub^n(a) \leq c_2 r^n\) for all \(a \in \A\), see the proof of (5) implying (6) in Theorem \ref{thm:equivalence of operator conditions} (at the end of Section \ref{sec:sub reduction}).

To prove that finite letters have slower growth rate, we define a `score' \(s(a)\) for each finite letter \(a \in \N_0\) recursively, as follows: \(s(0) = 1\), \(s(1) = \alpha\), \(s(2) = \alpha^2\) and, for \(a \geq 2\), we let \(s(a+1) = \alpha s(a) - s(a-2) - 1\). For a finite word \(w \in (\N_0)^\ast\), we define its score \(s(w)\) to be the sum of scores of its letters (of course, with multiplicity). For all \(a \in \N_0\), we have \(s(\sub(a)) = \alpha s(a)\); in fact, this formula dictated how we defined the recursion. Indeed, \(s(\sub(0)) = s(1) = \alpha = \alpha s(0)\) and \(s(\sub(1)) = s(2) = \alpha^2 = \alpha s(1)\). For \(a \geq 2\) we have \(s(\sub(a)) = s(0) + s(a-2) + s(a+1) = s(0) + s(a-2) + (\alpha s(a) - s(a-2) - s(0)) = \alpha s(a)\). It follows that, for any finite word \(w = a_1 a_2 \cdots a_m \in (\N_0)^\ast\), we have \(s(\sub(w)) = \alpha s(w)\), since
\[
s(\sub(w)) = \sum_{a_i \triangleleft w} s(\sub(a_i)) = \sum_{a_i \triangleleft w}  \alpha s(a_i) = \alpha \sum_{a_i \triangleleft w} s(a_i) = \alpha s(w) .
\]
Then by induction, for all \(a \in \A \subseteq A^\ast\) and \(n \in \N_0\), we have \(s(\sub^n(a)) = \alpha^n s(a)\).

We now claim that \(s(a) \geq 1\) for each \(a \in \N_0\). In fact, we show that for sufficiently large \(a\), we have \(s(a) \geq 20\) and \(s(a+1) \geq \beta s(a)\), where \(\beta = 5/4\). Supposing we can find some particular \(a = k \geq 2\) satisfying
\begin{equation} \label{eq:score increases}
s(a) \geq 20, \ s(a) \geq \beta s(a-1) \ \text{and} \ s(a-1) \geq \beta s(a-2) 
\end{equation}
it would follow that
\[
s(k+1) = \alpha s(k) - s(k-2) - 1 \geq s(k) \left( \alpha - \beta^2 - \frac{1}{20} \right) .
\]
Since \(\alpha - \beta^2 - 1/20 = 63/50 \geq 252/200 \geq 250/200 = \alpha\), Equation (\ref{eq:score increases}) also holds for \(a = k+1\), as required for induction, so we just need to show Equation (\ref{eq:score increases}) holds for some \(a = k \geq 2\). It turns out that \(k = 8\) is sufficient; explicitly the first eight scores have the following values:
\begin{align*}
s(0) = 1 , \ s(1) = \frac{39}{20} , \ s(2) = \frac{1521}{400} , s(3) = \frac{43319}{8000} , \ s(4) = \frac{1217441}{160000} , \ s(5) = \frac{32112199}{3200000} , \\ 
s(6) = \frac{841823761}{64000000} , \ s(7) = \frac{21811598679}{1280000000} , \ s(8) = \frac{568154756481}{25600000000} \geq 20 
\end{align*}
where the pertinent ratios are
\[
\frac{s(8)}{s(7)} = \frac{14568070679}{11185435220} \geq \frac{5}{4} \ \text{and} \ \frac{s(7)}{s(6)} = \frac{21811598679}{16836475220} \geq \frac{5}{4} ,
\]
as required. Let us note that, to avoid such calculations, we could have chosen \(s(0) = 20\), \(s(1) = 20\alpha\) etc. However, taking \(s(0) = 1\) seems more natural, and gives the more satisfying bound of \(\# \sub^n(0) \leq \alpha^n\) below.

The result now follows. Indeed, the above showed that \(s(a) \geq 1\) for all \(a \geq \N_0\); for \(a \in [0..7]\) these are listed, and the induction showed it also holds for all \(a \geq 8\). Since each letter has a score of at least \(1\), for any word \(w = w_1 \cdots w_n \in (\N_0)^\ast\) we have \(\# w = n \leq 1 + 1 + \cdots + 1 \leq s(w_1) + s(w_2) + \cdots + s(w_n) = s(w)\). In particular, \( \# \sub^n(a) \leq s(\sub^n(a)) = s(a) \alpha^n \).
\end{proof}

\subsection{Discussion, and variations on the example} \label{sec:counterexample discussion}
Let us give some context for the development of this counter-example. Initially, we observed and proved the same behaviour occurs for the substitution with \(a \mapsto 0 \ \lfloor a/2 \rfloor (a+1)\), for large enough \(a\), following some initial `stalling' terms \(0 \mapsto 1 \mapsto \cdots \mapsto m\). The idea here was to exploit the rapid decay of the term dividing letters by \(2\). A simple Haskell program was written that calculates lengths of superwords for a given substitution, where it was found to some surprise that the same behaviour appeared to occur even with the much slower decaying \(a-2\) term in place of \(\lfloor a/2 \rfloor\). In this case, a proof similar to the above was found with the more simply defined scores \(c_a = \beta^a\), when stalling to \(m = 7\), for a suitable constant \(\beta\). This method was too limited to extend down to the case of \(m=2\) though, where numerics suggested the example should still be non-uniquely ergodic. To extend down to the case of \(m=2\) treated in Theorem \ref{thm:non-UE example}, it was natural to try optimising the constants \(c_a\) recursively, which as we saw is sufficient (even though the bound \(\# w \leq s(w)\) is of course far from optimal).

One may suspect the source of non-uniform word growth is still the more rapid decay of the \(n-2\) term compared to the growth of the \(n+1\) term for substitution of larger letters, and to an extent this is true: the substitution \(0 \mapsto 1 \mapsto 2\) and then \(n \mapsto 0 \ (n-1) \ (n+1)\) does not exhibit this behaviour, our program finds that the slowest growing letter \(0\) satisfies \(\# \sub^{34}(0) = 17598130109 > 17179869184 = 2^{34}\) (and taking power \(33\) is insufficient). This means that the spectral radius satisfies \(r > 2\). Results in the next section will show that the essential spectral radius of this example is equal to \(2\), thus that the operator is quasi-compact, \(\sub\) has a NLF, superword growth is uniform (in the sense of (6) or (7) from Theorem \ref{thm:equivalence of operator conditions}) and \(X\) is uniquely ergodic.

However, this may be misleading. Consider stalling this substitution longer, to \(0 \mapsto 1 \mapsto \cdots \mapsto 7\) and afterwards \(n \mapsto 0 \ (n-1) \ (n+1)\). We find that even at level \(1000\) superwords, \(\sqrt[1000]{\# \sub^{1000}(0)} \approx 1.987 < 2^{1000}\). So we conjecture that this substitution also generates a non-uniquely ergodic subshift. This may still be true when only stalling until letter \(5\), although fewer will not work: for the substitution \(0 \mapsto 1 \mapsto \cdots \mapsto 4\) and then \(n \mapsto 0 \ (n-1) \ (n+1)\) we have \(\# \sub^{700}(0) > 2^{700}\) (such a large power is necessary: we compute \(\# \sub^{600}(0) < 2^{600}\)). So, in this case, \(r > 2\) and the subshift is uniquely ergodic.

Coming back to our main example of Theorem \ref{thm:non-UE example}, whilst \(X\) is not uniquely ergodic, there still seems to be strong statistical regularity in the appearance of letters in growing superwords of \emph{fixed} letters, in the following sense. Given \(a \in \A\) and \(b \in \A\), consider the quantity
\[
\mathrm{freq}^n(a,b) \coloneqq \frac{\text{number of occurrences of } a \text{ in } \sub^n(b)}{\# \sub^n(b)} .
\]
Based on some numerics from the program, we make the following conjecture: 

\begin{conjecture}
For the substitution of Theorem \ref{thm:main UE} and fixed \(a\), \(b \in \A\), we have \(\mathrm{freq}^n(a,b)\) converges to a limit as \(n \to \infty\). Moreover, these limiting values are equal for all finite \(b \in \N_0\).
\end{conjecture}

Of course, for \(b\) finite but very large (relative to \(n\), namely \(b > 2n+1\)), we will have \(\mathrm{freq}^n(a,b) = \mathrm{freq}^n(a,\infty)\). Taking \(a = 0\) and \(n\) large, for instance, even if no \(\infty\) letters appear in some \(w \in X\), one should imagine occurrences of \(0\) are within an arbitrarily small error of a certain frequency over the much more common (but shorter) \(n\)-superwords \(\sub^n(b)\) with \(b\) small but, once in a while, we see a different frequency of \(0\)s when spending time moving across \(n\)- (or larger) superwords with large labels \(b\). For example, applying our program to level \(500\)-superwords, \(\freq^{500}(0,b) \approx 0.31767\) for \(b \in [0..5]\), whilst \(\mathrm{freq}^{500}(0,\infty) = \mathrm{freq}^{500}(0,b) \approx 0.30902\) for \(b \geq 1001\).

\begin{remark}
Mean ergodicity of \(T\) implies the existence of a NLF, which is generally weaker than strong power convergence (which in turn is weaker than quasi-compactness, in the primitive case, see for instance Theorem \ref{thm:equivalence of operator conditions} later). Hence, Theorem \ref{thm:non-UE example} answers both Questions 4.9 and 6.14 in \cite{MRW25} in the negative: primitivity along with existence of isolated points in the alphabet (or even all but one point being isolated) does not guarantee existence of a (continuous) NLF, and thus certainly neither the operator properties of mean ergodicity, strong power convergence or quasi-compactness.
\end{remark}

\section{Substitution reduction and the essential spectral radius}\label{sec:sub reduction}

In this section we will show how bounds, and often an exact formulae, can be found for the essential spectral radius of the operator \(M\) (or \(T\)), especially in the case that the alphabet consists of only finitely many accumulation points. This can sometimes be used to establish quasi-compactness of \(M\), from which one may deduce unique ergodicity of \(X\) when \(\sub\) is primitive.

\subsection{Substitution reduction and compact perturbations}
For the following operator theoretic definitions and properties, see for instance \cite{Arv02,Bro61}. Recall that an operator \(V \colon E \to E\) is called \textbf{compact} if \(V(B)\) is relatively compact, for \(B\) the closed unit ball in \(E\). Let
\[
\|M\|_\mathrm{ess} \coloneqq \inf\{ \|M - V\| \mid V \text{ is a compact operator} \} ,
\]
which is the operator norm of \(M\) when projected to the Calkin algebra (the quotient of the Banach algebra of bounded linear operators on \(E\) by the closed two-sided ideal of compact operators). The \textbf{essential spectral radius} of \(T\) may be defined by a Gelfand type limit:
\begin{equation}\label{eq:essential Gelfand}
r(M)_\mathrm{ess} \coloneqq \lim_{n \to \infty} \sqrt[n]{ \|M^n\|_\mathrm{ess} } = \inf_{n \in \N} \sqrt[n]{ \|M^n\|_\mathrm{ess} } .
\end{equation}
This number may also be understood as the spectral radius of \(M\) when projected to the Calkin algebra, or the common radius of the various definitions of the essential spectrum of \(M\). For example, it is also given by the smallest value for which every element of the spectrum of \(M\) with strictly larger modulus is an isolated eigenvalue of \(M\) with finite algebraic multiplicity. It is invariant under compact perturbation, that is, \(r_\mathrm{ess}(M) = r_\mathrm{ess}(M+V)\) for all compact \(V\), and for any \(\alpha \in \R\) we have \(r_\mathrm{ess}(\alpha M) = |\alpha| r_\mathrm{ess}(M)\).

The operator \(M\) is called \textbf{quasi-compact} if \(r_\mathrm{ess}(M) < r(M)\). Recalling that \(T = (1/r)M\), we have that \(M\) is quasi-compact if and only if \(T\) is, that is, \(r_\mathrm{ess}(T) < 1\). Note that quasi-compactness does not imply strong power convergence. Indeed, for the (irreducible) substitution \(a \mapsto bb\), \(b \mapsto aa\) on \(\A = \{a,b\}\), obviously \(M\) is quasi-compact (for any finite alphabet substitution, \(T\) is even compact), but for the function \(f \in E\) given by \(f(a) = 1\) and \(f(b) = -1\), we have \(T^n f = (-1)^n f\) is not convergent. However, for \emph{primitive} substitutions, quasi-compactness is extremely useful: the following (with a bit more) was stated in \cite[Proposition 6.6]{MRW25} although is essentially proved by an operator theoretic result, such as \cite[Prop.\ 5]{Abd75}.

\begin{proposition}\label{prop:QC=>UE}
If \(\sub\) is primitive and \(M\) is quasi-compact then \(T\) is uniformly (thus also strongly) power convergent. In particular, \(X\) is uniquely ergodic.
\end{proposition}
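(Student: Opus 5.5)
The plan is to derive uniform power convergence from the spectral decomposition that quasi-compactness provides. Quasi-compactness means that the part of \(\sigma(T)\) outside some disc of radius \(r'/r < 1\) consists of finitely many isolated eigenvalues of finite algebraic multiplicity. If the only such eigenvalue of modulus \(1\) is \(1\) itself, and it is a simple pole of the resolvent, then \(T = P + S\). Here \(P\) is the (finite-rank) spectral projection for the peripheral spectrum \(\{1\}\) and \(PS = SP = 0\). Also \(r(S) < 1\), since \(\sigma(S)\) is \(\sigma(T)\setminus\{1\}\) together with \(0\), and everything left lies in a disc of radius \(<1\). Then \(T^n = P + S^n\) with \(\|S^n\| \to 0\) by Gelfand's formula, which is uniform power convergence. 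Unique ergodicity then follows from the first statement of Theorem \ref{thm:main UE}, since primitivity implies irreducibility.

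\textbf{Step 1: a NLF exists.} Since \(M\) is a positive operator on the Banach lattice \(E = C(\A)\), its spectral radius \(r\) lies in \(\sigma(M)\). Lemma \ref{lem:irreducible=>letter growth} gives \(r \ge 1 > 0\). As \(r > r_{\mathrm{ess}}(M)\), the point \(r\) is a pole of the resolvent. For positive operators, the leading Laurent coefficient at \(r\) is a nonzero positive operator (standard positive-operator theory, e.g.\ Schaefer). Applying it to \(\bbo\) gives some \(\ell \in K\setminus\{0\}\) with \(M\ell = r\ell\). By Lemma \ref{lem:NLF unique and positive}, \(\ell \in K_{>0}\), so \(c_1\bbo \le \ell \le c_2\bbo\). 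Hence \(\|T^n\| = \|T^n\bbo\| \le c_1^{-1}\|T^n \ell\| \le c_2/c_1\), so the powers of \(T\) are bounded. Bounded powers force every peripheral eigenvalue to be a pole of order one, since a nontrivial Jordan block would make \(\|T^n\|\) grow. By the same Lemma the fixed space of \(T\) is \(\R\ell\), so \(1\) is an algebraically simple eigenvalue. The same argument applied to \(T'\) gives a positive fixed functional \(\mu\) with \(\mu(\ell)=1\).

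\textbf{Step 2: no other peripheral eigenvalues.} This is the main obstacle. Suppose \(Tg = \lambda g\) with \(|\lambda| = 1\) and \(g\) complex-valued (working in the complexification). Then \(|g| \le T|g|\). Consider the closed set where the continuous function \(|g|/\ell\) attains its maximum \(m\); write \(h = m\ell - |g| \ge 0\), so \(Th \le h\). I would first try to show \(h \equiv 0\) directly. If \(h(a) = 0\), then \(\sum_{b \triangleleft \sub^n(a)} \ell(b)\,h(b)/\ell(b) \le r^n h(a) = 0\) forces \(h = 0\) on every letter of \(\sub^n(a)\), for every \(n\). Primitivity then makes the zero set of \(h\) dense, hence all of \(\A\). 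So \(|g| = m\ell\). Write \(g = m\ell u\) with \(|u| \equiv 1\). The identity \(\sum_{b\triangleleft \sub^n(a)} \ell(b)u(b) = \lambda^n r^n \ell(a)u(a)\), together with \(\sum_{b \triangleleft \sub^n(a)} \ell(b) = r^n\ell(a)\), is an equality case of the triangle inequality. Hence \(u(b) = \lambda^n u(a)\) for every \(b \triangleleft \sub^n(a)\).

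\textbf{Step 3: forcing \(\lambda = 1\) and concluding.} Fix \(p \in \A\) and \(\eta > 0\), and let \(U\) be a neighbourhood of \(p\) on which \(|u - u(p)| < \eta\). By primitivity there is an \(N\) such that every \(N\)-superword meets \(U\). Every \((N+1)\)-superword \(\sub^{N+1}(a) = \sub^N(\sub(a))\) also meets \(U\), since it is a concatenation of \(N\)-superwords. From Step 2, \(\lambda^N u(a) \approx u(p)\) and \(\lambda^{N+1}u(a) \approx u(p)\) up to \(\eta\). Hence \(|\lambda - 1| \le 2\eta\), and letting \(\eta \to 0\) gives \(\lambda = 1\). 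So the peripheral spectrum is \(\{1\}\), a simple pole with one-dimensional range, and \(P f = \mu(f)\ell\). The decomposition described at the start then gives \(\|T^n - P\| = \|S^n\| \to 0\), i.e.\ uniform (hence strong) power convergence. Theorem \ref{thm:main UE} then gives unique ergodicity of \(X\).
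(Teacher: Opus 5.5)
Your argument is correct, and it takes a genuinely different route from the paper, which gives no argument of its own here. The paper simply refers to \cite[Proposition 6.6]{MRW25} and describes the statement as essentially a general operator-theoretic fact about quasi-compact operators (\cite[Prop.\ 5]{Abd75}).

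You instead give a self-contained Perron--Frobenius style proof:
\begin{itemize}
\item positivity of the leading Laurent coefficient at \(r\) produces a natural length function;
\item Lemma \ref{lem:NLF unique and positive} makes it strictly positive, so \(T\) is power bounded and \(1\) is an algebraically simple pole;
\item the equality case of the triangle inequality forces \(|g| = m\ell\) and \(u(b) = \lambda^n u(a)\) for any peripheral eigenvector;
\item comparing levels \(N\) and \(N+1\) via primitivity rules out every \(\lambda \neq 1\).
\end{itemize}

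The citation buys brevity and generality. Your route buys transparency about which hypothesis does what. Irreducibility alone already gives the NLF, power-boundedness, the simple pole at \(1\), and even \(|g| = m\ell\): in Step 2 the zero set \(Z\) of \(h\) is closed with \(\sub(Z) \subseteq Z^\ast\), so irreducibility suffices there. Primitivity is used only to exclude the other peripheral eigenvalues. This is exactly what fails for the paper's quasi-compact but non-convergent example \(a \mapsto bb\), \(b \mapsto aa\), where \(-1\) is an eigenvalue. Your proof also exhibits the limit explicitly as the rank-one projection \(Pf = \mu(f)\ell\). It obtains the NLF directly, without passing through Theorem \ref{thm:equivalence of operator conditions}, whose step (1)\(\Rightarrow\)(2) is this very proposition.

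A few small points deserve to be stated explicitly:
\begin{itemize}
\item \(Q\bbo \neq 0\) for the nonzero positive leading coefficient \(Q\), because \(\bbo\) is an order unit: \(|Qf| \leq \|f\|\, Q\bbo\).
\item Quasi-compactness guarantees that every spectral value on the unit circle is an eigenvalue, so Step 2 really covers the whole peripheral spectrum.
\item Primitive substitutions are non-erasing, so \(\sub^{N+1}(a)\) genuinely contains an \(N\)-superword in Step 3.
\end{itemize}
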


The essential spectral radius measures the asymptotic growth of the `essentially infinite-dimensional part' of the operator \(M\) and, as such, one may suspect it should not be affected by `finitary adjustments' of the substitution. The following definition and result makes this explicit.

\begin{definition}\label{def:reduced substitution}
A substitution \(\tau\) is called a \textbf{reduction} of \(\sub\) when \(\tau\) may be defined by one of the following procedures:
\begin{itemize}
	\item[(\(\varepsilon_\mathrm{in}\))] erasing isolated inputs: given a finite set \(P \subseteq \A\) of isolated points, we define \(\tau(a) = \varepsilon\) if \(a \in P\), and \(\tau(a) = \sub(a)\) otherwise;
	\item[(\(\varepsilon_\mathrm{out}\))] erasing isolated outputs: given a finite set \(P \subseteq \A\) of isolated points, we define \(\tau(a)\) by deleting all occurrences of any \(b \in P\) in \(\sub(a)\);
	\item[(\(\varepsilon_\mathrm{con}\))] erasing constants: if \(U \subseteq A\) is clopen, \(b \in \A\) and, for all \(u \in U\), we have \(b \triangleleft \sub(u)\), then for all \(a \notin U\) we define \(\tau(a) = \sub(a)\), and for all \(u \in U\) we define \(\tau(u)\) by deleting one occurrence of \(b\) from \(\sub(u)\).
\end{itemize}
Generally, call \(\tau\) a \textbf{reduction} of \(\sub\) if it can be derived by iteratively applying any finite number of the above procedures.
\end{definition}

\begin{remark}
Although (\(\varepsilon_\mathrm{in}\)) and (\(\varepsilon_\mathrm{out}\)) reductions clearly produce well-defined, continuous substitutions, there is a minor abuse in the last one. For instance, consider the (continuous) substitution \(\sub\) on \(\A = \N_\infty\) defined, for \(n\) even, by \( \sub(n) = (n+1) \ 2n \ \infty\) and, for \(n\) odd, by \(\sub(n) = \infty \ 2n \ (n+1)\). There is one occurrence of \(\infty\) in \(\sub(u)\) for all \(u \in U \coloneqq \A\). However, just deleting each would appear to give, for \(n\) even, \(\tau(n) = (n+1) \ 2n\) and, for \(n\) odd, \(\tau(n) = 2n \ (n+1)\), which is discontinuous. However, the ordering of letters is irrelevant to the substitution operators (which will be the focus of our results in this section). So one could rearrange the letters to derive a continuous substitution \(\tau\), without affecting the operators. In fact, it is preferable within this section to just consider these substitutions as unordered, that is as `finite multiset substitutions' (recall the discussion in Section \ref{sec:sub operators}), which bypasses this issue entirely. Thus, we should technically consider the reduction of a substitution in Definition \ref{def:reduced substitution} as merely a finite (unordered) multiset substitution. However, we will overlook this minor technicality in what follows, as it does not meaningfully affect the arguments and should cause no confusion.
\end{remark}

The utility of the above definition is the following:

\begin{lemma}\label{lem:essential spectral radius unaffected by reduction}
If \(\tau\) is a reduction of \(\sub\), with substitution operators \(M_\tau\) and \(M\), respectively, then \(r_\mathrm{ess}(M_\tau) = r_\mathrm{ess}(M)\).
\end{lemma}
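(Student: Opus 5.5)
Since a general reduction is a finite composition of the three elementary procedures, and equality of essential spectral radii is transitive, it suffices to treat a single application of (\(\varepsilon_\mathrm{in}\)), (\(\varepsilon_\mathrm{out}\)) or (\(\varepsilon_\mathrm{con}\)). In each case the plan is to show that \(M - M_\tau\) is a finite-rank operator, hence compact. The claim then follows from invariance of \(r_\mathrm{ess}\) under compact perturbation, as recalled above. Since the operators only see the abelianisation of the substitution, the ordering of letters (and hence the continuity issue from the preceding remark) plays no role.

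For (\(\varepsilon_\mathrm{in}\)), \(((M - M_\tau)f)(a) = \chi_P(a)\,(Mf)(a)\), where \(\chi_P\) is the indicator function of the finite set \(P\) of isolated points. This is continuous because each point of \(P\) is isolated, so \(\{a\}\) is clopen. Thus
\[
(M-M_\tau)f = \sum_{p \in P} (Mf)(p)\, \chi_{\{p\}} ,
\]
which has range in the finite-dimensional span of the \(\chi_{\{p\}}\), so the difference has finite rank.

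For (\(\varepsilon_\mathrm{out}\)), let \(n_p(a)\) denote the number of occurrences of \(p\) in \(\sub(a)\). Then
\[
(M - M_\tau) f = \sum_{p \in P} f(p)\, n_p .
\]
Each \(n_p\) is continuous: \(\sub\) is continuous into \(\A^\ast\), and on each clopen \(\A_i\) the count is a sum of the functions \(\chi_{\{p\}}\) composed with the coordinate maps, all of which are continuous since \(p\) is isolated. So again the rank is at most \(\# P\).

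For (\(\varepsilon_\mathrm{con}\)), the difference is \((M - M_\tau) f = f(b)\,\chi_U\). This is rank one, and \(\chi_U \in E\) because \(U\) is clopen. Note that \(b\) itself need not be isolated here, since \(f\) is only evaluated at \(b\), and \(\delta_b\) is a bounded functional.

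The only subtle point is checking that these rank-one pieces genuinely lie in \(E\) and are bounded. That is exactly where the hypotheses enter: isolatedness is needed for (\(\varepsilon_\mathrm{in}\)) and (\(\varepsilon_\mathrm{out}\)), and clopenness of \(U\) for (\(\varepsilon_\mathrm{con}\)). Once this is verified, finite-rank bounded operators are compact and the lemma follows.
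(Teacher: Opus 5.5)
Your proposal is correct and follows essentially the same route as the paper. The paper also writes $M - M_\tau$ as the finite-rank operators $V_P \circ M$ for (\(\varepsilon_\mathrm{in}\)) and $M \circ V_P$ for (\(\varepsilon_\mathrm{out}\)), where $V_P$ restricts a function to the isolated set $P$, and as the rank-one $f \mapsto f(b)\chi_U$ for (\(\varepsilon_\mathrm{con}\)), before invoking invariance of $r_\mathrm{ess}$ under compact perturbation; your formulas $\sum_{p \in P} (Mf)(p)\chi_{\{p\}}$ and $\sum_{p \in P} f(p)\, n_p$ are exactly these compositions written out.
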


\begin{proof}
We show that \(M_\tau\) is a compact perturbation of \(M\), upon applying any of the three reduction procedures. First, let \(\tau\) be defined by an (\(\varepsilon_\mathrm{in}\)) reduction, for a given finite set \(P\) of isolated points. Let \(Q = \A \setminus P\) be the complementary points and define \(V_P \colon E \to E\) by 
\[
(V_P f)(a) =
\begin{cases}
f(a) & \text{for} \ a \in P \\
0    & \text{otherwise.}
\end{cases}
\]
Note that \(V_P \colon E \to E\), since \(\A = P \sqcup Q\) is a clopen partition. Clearly, \(V_P\) is linear and has image the \(\#P\)-dimensional subspace of functions supported on \(P\), so is finite rank and thus compact. Define \(V \coloneqq V_P \circ M\) which, as a composition of a bounded and compact operator, is also compact. With \(I\) the identity operator, \(I - V_P\) is defined by \((I - V_P)(f) = f |_Q\), where \((f |_Q)(a) = f(a)\) for \(a \in Q\) and \((f |_Q)(a) = 0\) otherwise. Thus,
\[
(M-V)(f) = ((I-V_P) \circ M)(f) = (Mf) |_Q .
\]
It follows that \(M - V = M_\tau\). Indeed, recall that \(\tau\) is defined identically to \(\sub\) on letters in \(Q\) and sends letters in \(P\) to the empty word which, from the operator's perspective, is equivalent to setting \((M_\tau(f))(p) = 0\) for \(p \in P\).

Now suppose that \(\tau\) is instead defined by (\(\varepsilon_\mathrm{out}\)) reduction. The proof is similar to the above: this time we define, instead, \(V \coloneqq M \circ V_P\). Then \((M-V)(f) = (M \circ (I-V_P))(f) = M (f|_Q)\). Thus, \(M-V = M_\tau\), since removing occurrences of letters in \(P\) from superwords has the same effect as setting \(f\) equal to \(0\) on such letters and then applying the substitution operator.

Finally, let \(\tau\) be defined by (\(\varepsilon_\mathrm{con}\)) reduction, `erasing a constant', with respect to a clopen set \(U\) on which each superword contains an occurrence of \(b \in \A\). Let \(\chi_U\) be the indicator function of \(U\), that is \(\chi_U(a) = 1\) for \(a \in U\) and \(\chi_U(a) = 0\) for \(a \notin U\). Since \(U\) is clopen, we have \(\chi_U \in E\). Then we define the operator \(V_U^b \colon E \to E\) as
\[
V_U^b(f) \coloneqq (f(b)) \cdot \chi_U .
\]
Clearly, \(V_U^b\) is linear and has rank one, since its image is spanned by \(\chi_U \in E\) (so, in particular, \(V_U^b\) is compact). Then \((V_U^b(f))(a) = 0\) if \(a \notin U\) and \((V_U^b(f))(a) = f(b)\) otherwise, thus
\[
((M - V_U^b)f)(a) =
\begin{cases}
(Mf)(a)        & \text{for } a \notin U \\
(Mf)(a) - f(b) & \text{for } a \in U . \\
\end{cases}
\]
It follows that \(M-V_U^b = M_\tau\), since the above has had the effect of removing one occurrence of \(b\) from the substitution of any letter in \(U\). Since applying any finite combination of the three reduction procedures defines a new operator that only differs by the addition of a compact operator, the essential spectral radius remains invariant.
\end{proof}

We now focus on substitutions over alphabets that have only finitely many accumulation points, where we will always be able to make drastic enough reductions to have effective comparisons with a finite substitution, whereupon calculations may be made. Denote the accumulation points by \(\A^{(\infty)} = \{\infty_1,\infty_2,\ldots,\infty_m\}\) and the isolated points by \(\A^\bullet = \A \setminus \A^{(\infty)}\).

\begin{definition}
A substitution \(\tau\) on an alphabet with finitely many accumulation points is said to be \textbf{\(\infty\)-reduced} if there are open (and, from (1) below, necessarily clopen) neighbourhoods \(U_i\) of \(\infty_i\) satisfying the following:
\begin{enumerate}
	\item the \(U_i\) are mutually disjoint;
	\item defining \(P \coloneqq \A \setminus (U_1 \cup \cdots \cup U_m)\), for all \(p \in P\) we have \(\tau(p) = \varepsilon\);
	\item for all \(i \in [1..m]\) and \(b \in \A\), there is no open neighbourhood \(U_i' \subseteq U_i\) of \(\infty_i\) for which, for all \(u \in U_i'\), we have \(b \triangleleft \tau(u)\);
	\item for all \(a \in U_i\) we have \(\# \tau(a) \leq \# \tau(\infty_i)\) and, furthermore, for all \(i\) and \(j \in [1..m]\), and all \(u \in U_i\), the number of occurrences of letters of \(U_j\) in \(\tau(u)\) is at most the number of occurrences of \(\infty_j\) in \(\tau(\infty_i)\).
\end{enumerate}
In this case, \(\tau(\A^{(\infty)}) \subseteq (\A^{(\infty)})^\ast\), that is, accumulation points substitute to words consisting only of accumulation points. We define the substitution \(\overline{\tau}\) on the finite alphabet \(\A^{(\infty)}\) by restriction i.e., \(\overline{\tau}(\infty_i) \coloneqq \tau(\infty_i)\) for all \(\infty_i \in \A^{(\infty)}\).
\end{definition}

Condition (3) guarantees that there are no `constants' within any of the \(U_i\). Condition (4) may be thought of as requiring substitution of each \(U_i\) to essentially mimic substitution of the accumulation points, which it is already guaranteed to do sufficiently close to them, by continuity (although, further from the accumulation points, letters may get erased). One might also demand that all \(\tau(a)\) contain no letters in \(P\), by applying an (\(\varepsilon_\mathrm{out}\)) reduction, which would have the benefit of making the first requirement of Condition (4) unnecessary. However, the given definition allows for a very slightly simpler presentation when applied to examples.

\begin{lemma}\label{lem:infinity-reduction exists}
Every substitution on an alphabet with finitely many accumulation points has a reduction one that is \(\infty\)-reduced.
\end{lemma}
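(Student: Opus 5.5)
The plan is to build \(\tau\) in three stages. First we choose good clopen neighbourhoods and erase everything outside them using (\(\varepsilon_\mathrm{in}\)). Then we repeatedly strip `constants' using (\(\varepsilon_\mathrm{con}\)), shrinking neighbourhoods as we go. Finally we check Conditions (1)--(4). Throughout, following the preceding remark, we treat substitutions as unordered multiset substitutions, so only the multiset of letters in each \(\tau(a)\) matters.

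\emph{Step 1 (neighbourhoods).} By the Hausdorff property, pick mutually disjoint open sets \(W_i \ni \infty_i\). The complement \(F = \A \setminus \bigcup_i W_i\) is compact and contains no accumulation point, so it is a finite set of isolated points. Hence each \(W_i = \A \setminus (F \cup \bigcup_{j \neq i} W_j)\) is closed, and so clopen. A general observation we use repeatedly: any clopen neighbourhood \(U\) of \(\infty_i\) contained in \(W_i\) has \(W_i \setminus U\) compact and free of accumulation points, hence finite and consisting of isolated points.

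By continuity of \(\sub\) into \(\A^\ast\), we may shrink each \(W_i\) to a clopen neighbourhood \(V_i\) with the following properties for all \(u \in V_i\):
\begin{itemize}
\item[(a)] \(\# \sub(u) = \# \sub(\infty_i)\);
\item[(b)] for each position \(k\), if \(\sub(\infty_i)_k\) is isolated then \(\sub(u)_k = \sub(\infty_i)_k\);
\item[(c)] for each position \(k\), if \(\sub(\infty_i)_k = \infty_j\) then \(\sub(u)_k \in V_j\).
\end{itemize}
Achieving (c) needs a small fixed-point argument, since shrinking \(V_j\) forces shrinking the preimage neighbourhoods in turn. With finitely many indices we can handle this by first fixing clopen targets \(V_j\) and then choosing clopen \(V_i' \subseteq V_i\) whose images land in the \(V_j\). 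Letters of \(V_j \setminus V_j'\) are then isolated, finitely many, and simply fall into \(P\). With this setup, apply (\(\varepsilon_\mathrm{in}\)) to the finite set \(P = \A \setminus \bigcup_i V_i\).

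\emph{Step 2 (stripping constants).} While there exist \(i\), \(b\) and a neighbourhood \(U' \subseteq U_i\) of \(\infty_i\) (which we may take clopen) with \(b \triangleleft \tau(u)\) for all \(u \in U'\), do the following:
\begin{itemize}
\item[(i)] replace \(U_i\) by \(U'\) and apply (\(\varepsilon_\mathrm{in}\)) to the finite set \(U_i \setminus U'\);
\item[(ii)] apply (\(\varepsilon_\mathrm{con}\)) with clopen set \(U'\) and letter \(b\).
\end{itemize}
Since \(\infty_i \in U'\), each round deletes a letter from \(\tau(\infty_i)\). The letters of the \(\tau(\infty_j)\) are never increased, so the loop terminates after at most \(\sum_i \# \sub(\infty_i)\) rounds. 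At termination, Conditions (1), (2) and (3) hold by construction.

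\emph{Step 3 (Condition (4), the main check).} Immediately after Step 1, the multiset \(\tau(u)\) for \(u \in U_i\) matches \(\tau(\infty_i)\) position by position, so we have equality in both parts of (4).

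Each (\(\varepsilon_\mathrm{con}\)) round removes the same letter \(b\) from \(\tau(u)\) and from \(\tau(\infty_i)\). That letter lies in some fixed \(U_j\) or in \(P\), so the per-\(U_j\) counts for \(u\) and for \(\infty_i\) drop equally. The per-\(\infty_j\) count in \(\tau(\infty_i)\) may drop only if \(b = \infty_j\), which is consistent with this. Each shrinking of some \(U_j\) can only lower the count of letters of \(U_j\) in \(\tau(u)\), and it leaves \(\infty_j\) in place. Hence the inequalities persist.

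It remains to see that \(\tau(\infty_i)\) consists only of accumulation points, which is needed for \(\overline{\tau}\). If an isolated letter \(b\) occurred in \(\tau(\infty_i)\), continuity (as a multiset-valued map, via (b)) would force \(b \triangleleft \tau(u)\) for all \(u\) near \(\infty_i\), contradicting (3).

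I expect the main obstacle to be bookkeeping the interaction between shrinking neighbourhoods and the count inequality in (4). In particular, I need to ensure that letters counted as lying in \(U_j\) for \(\tau(u)\) really correspond to occurrences of \(\infty_j\) in \(\tau(\infty_i)\) (via (c)), and not to other accumulation points. This is why (c) should be arranged carefully at Step 1.
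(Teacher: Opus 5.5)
You follow the same route as the paper: disjoint clopen neighbourhoods, (\(\varepsilon_\mathrm{in}\)) on the finite complement, a terminating loop of (\(\varepsilon_\mathrm{in}\)) plus (\(\varepsilon_\mathrm{con}\)), and the observation that (3) plus continuity forces \(\tau(\infty_i) \in (\A^{(\infty)})^\ast\). The one difference is the order. You impose the continuity conditions for (4) \emph{before} stripping constants and then propagate (4) through the loop. The paper shrinks for (4) \emph{after} (1)--(3) hold.

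That reordering breaks your base case. The Step 3 claim that (4) holds immediately after Step 1 is false. Condition (4) bounds the number of \(U_j\)-letters in \(\tau(u)\) by the number of occurrences of \(\infty_j\) in \(\tau(\infty_i)\), not by its number of \(U_j\)-letters. Nothing in Step 1 stops an isolated letter of \(\sub(\infty_i)\) from lying in some \(V_j\). By (b) that letter then occurs in every \(\tau(u)\), \(u \in V_i\), and is counted on the left only.

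Concretely, take \(\A = \N_\infty\) with \(\sub(n) = 5\,(n+1)\,(n+1)\) and \(\sub(\infty) = 5\,\infty\,\infty\). The set \(V_1 = [3..\infty]\) satisfies (a)--(c). Yet after Step 1 the word \(\tau(\infty) = 5\,\infty\,\infty\) has three \(U_1\)-letters and only two \(\infty\)'s, so (4) already fails at \(a = \infty\). Your propagation steps are themselves correct: stripping an isolated \(b\) or shrinking some \(U_j\) only lowers left-hand sides, and stripping \(\infty_j\) lowers both sides by one. But they start from a false inequality, so as written (4) is never established. The loop does in fact strip the \(5\), but your argument does not track this.

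Separately, exact equality also fails for the reason you note yourself, that letters in \(\infty_j\)-slots can fall into \(P\). Indeed (c) cannot be met exactly in general, e.g.\ for \(\sub(0)=0\), \(\sub(n) = 0\,(n-1)\) on \(\N_\infty\). This is harmless, since only \(\leq\) is needed.

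There are two easy repairs.

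\emph{Option (i).} In Step 1, also delete from every \(V_j\) the finitely many isolated letters occurring in \(\sub(\infty_1), \ldots, \sub(\infty_m)\); the \(V_j\) stay clopen. Then, after Step 1, the isolated letters of each \(\tau(\infty_i)\) lie in \(P\). Letters of \(\tau(u)\) in \(\infty_j\)-slots lie in \(W_j\), hence in \(U_j\) or in \(P\), so (4) holds with \(\leq\). Any constant \(b\) stripped on a neighbourhood of \(\infty_i\) occurs in \(\tau(\infty_i)\), so it is either in \(P\) or equal to some \(\infty_j\). Your propagation argument then goes through.

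\emph{Option (ii).} Follow the paper's order and run the stripping loop first. Once (3) holds, \(\tau(\infty_i)\) consists only of accumulation points, so its number of \(U_j\)-letters equals its number of \(\infty_j\)'s. Continuity of the (multiset) substitution then gives clopen \(U_i' \subseteq U_i\) on which \(\#\tau(a) = \#\tau(\infty_i)\) and the \(U_j\)-counts agree. One more (\(\varepsilon_\mathrm{in}\)) shrinking preserves (1)--(3) and can only decrease left-hand counts. This route also makes your conditions (a)--(c), and the fixed-point detour, unnecessary.
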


\begin{proof}
Since \(\A\) is Hausdorff, we may certainly choose mutually disjoint open neighbourhoods \(U_i\) of the \(\infty_i\). Since the complement of each \(U_i\) is then the union of the other open \(U_j\) and some (also open) isolated points, each \(U_i\) is also closed. Thus, by performing (\(\varepsilon_\mathrm{in}\)) reduction, with \(P\) the complement of \(U_1 \cup \cdots \cup U_m\), we may derive reduce to a substitution satisfying (1--2).

Now, supposing that (3) is not satisfied (say, for a given \(b \in \A\) and \(U_i'\)), we may first perform another (\(\varepsilon_\mathrm{in}\)) reduction to restrict the non-erasing letters to \(U_1 \cup \cdots \cup U_{i-1} \cup U_i' \cup U_{i+1} \cup \cdots \cup U_m\) (so that (1--2) are still satisfied), and then (\(\varepsilon_\mathrm{con}\)) reduction on \(U_i'\) to remove a single occurrence of \(b\) here. If (3) still fails, we may repeat this process again. Each time, the value of some \(\# \sub(\infty_i) \in \N_0\) is reduced by 1, so eventually (3) is satisfied.

Once (1--3) are satisfied, it is clear that \(\tau(\A^{(\infty)}) \subseteq (\A^{(\infty)})^\ast\). Indeed, if \(b \triangleleft \sub(\infty_i)\) for some \(b \in \A^\bullet\) then there exists some open neighbourhood \(U\) of \(\infty_i\) with \(b \triangleleft \sub(a)\) for all \(a \in U\), by continuity and the fact that \(\{b\}\) is open, contradicting (3). Also by continuity, for each \(\infty_i \in \A^{(\infty)}\), clearly there is a sufficiently small open neighbourhood \(U_i' \subseteq U_i\) of \(\infty_i\) so that, for all \(a \in U_i'\), the number of occurrences of \(U_j\)-letters in \(\tau(a)\) is equal to the number of occurrences of \(\infty_j\) in \(\tau(\infty_i)\). Erasing \(U_i\) down to \(U_i'\) of course does not affect properties (1--3), which can be achieved with (\(\varepsilon_\mathrm{in}\)) reductions again. We may repeat this for each accumulation point until (4) is satisfied.
\end{proof}

Generally, one may think of an \(\infty\)-reduced substitution to be one that only contains `columns with \(\infty\) limits' about each accumulation point (and with no such being constant).

\begin{example}
Let \(\A = \Z \cup \{-\infty,\infty\}\) be the two-point compactification of the integers. We define a substitution \(\sub \colon \A \to \A^+\) as follows:
\[
\sub(n) = 
\begin{cases}
0 \ 1                               & \text{for } n=0 \\
\min\{7,n\} \ (n+1) \ (-n) \ \infty & \text{for} \ n > 0 \\
0                                   & \text{for} \ n < 0 .
\end{cases}
\]
Of course, by continuity, we require \(\sub(-\infty) = 0\) and \(\sub(\infty) = 7 \infty (-\infty) \infty\). The first few iterations on \(0\) are:
\[
0 \mapsto 01 \mapsto 01 12(-1)\infty \mapsto 01 12(-1)\infty 12(-1)\infty 23(-2)\infty 0 7 \infty(-\infty) \infty \mapsto \cdots .
\]
This substitution is primitive, which may be easily checked. An example of an \(\infty\)-reduction is
\[
\tau(n) =
\begin{cases}
(n+1) \ (-n)  & \text{for } n \geq 8\\
\varepsilon & \text{otherwise} ,
\end{cases}
\]
with \(U_1 = [-\infty..-1]\) and \(U_2 = [8..\infty]\) in Definition \ref{def:reduced substitution}. It may be realised by an (\(\varepsilon_\mathrm{in}\)) reduction that erases superwords of letters in \(P = [0..7]\), then (\(\varepsilon_\mathrm{con}\)) erasing the constant \(7\), then \(\infty\) on \([8..\infty]\) and also (\(\varepsilon_\mathrm{con}\)) erasing the constant \(0\) on \([-\infty..-1]\).

Note that, although the original substitution was primitive, the induced finite substitution \(\overline{\tau}\) on \(\A^{(\infty)} = \{-\infty,\infty\}\) is erasing. It is given by \(\overline{\tau}(-\infty) = \varepsilon\) and \(\overline{\tau}(\infty) = \infty (-\infty)\) and has substitution operator represented by the matrix
\(\begin{pmatrix}
1 & 1 \\
0 & 0
\end{pmatrix}\), with eigenvalues \(1\) and \(0\), thus with spectral radius \(1\). As we will see below, since \(\tau\) maps isolated points to words of isolated points, \(r_\mathrm{ess}(M) = 1\) is the same value. As for any irreducible substitution, \(r(M) > 1\) (see Lemma \ref{lem:irreducible=>letter growth}), so \(M\) is quasi-compact and \(X\) is uniquely ergodic.
\end{example}

Generally, we may relate (and often identify) the essential spectral radius of an \(\infty\)-reduced substitution \(\tau\) with the spectral radius of the finite substitution \(\overline{\tau}\), by the following result:

\begin{theorem} \label{thm:spectral radius of reduction}
Let \(\tau\) be an \(\infty\)-reduced substitution, with substitution operator \(M\). Then, for all \(n \in \N\),
\begin{equation} \label{eq:essential norm equation}
\| M^n \|_\mathrm{ess} \leq \| M^n \| = \| M_\infty^n \| = \max_{a \in \A^{(\infty)}} \# \overline{\tau}^n(a) ,
\end{equation}
where \(M_\infty\) is the substitution operator of \(\overline{\tau}\). In particular, \(r_\mathrm{ess}(M) \leq r(M) = r(M_\infty)\).

Suppose, additionally, that \(\tau\) maps isolated points to isolated points, that is, \(\tau(\A^\bullet) \subset (\A^\bullet)^\ast\). Or, more generally, suppose that for all \(n \in \N\), all \(\infty_i \in \A^{(\infty)}\) and all open neighbourhoods \(U\) of \(\infty_i\), there exists some \(a \in U\) with \(\tau^n(a) \in (\A^\bullet)^\ast\). Then the inequality in Equation (\ref{eq:essential norm equation}) is an equality. Hence, in this case, we have
\[
r_\mathrm{ess}(M) = r(M_\infty) .
\]
\end{theorem}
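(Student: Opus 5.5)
The plan has two parts: first compute \(\|M^n\|\) exactly by comparing letter counts with \(\overline{\tau}\), then bound \(\|M^n\|_\mathrm{ess}\) from below using disjointly supported test functions that a compact operator must send to \(0\).

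\textbf{Step 1: the norm identity.} By positivity, \(\|M^n\| = \|M^n \bbo\| = \max_{a \in \A} \# \tau^n(a)\), and likewise \(\|M_\infty^n\| = \max_{a \in \A^{(\infty)}} \# \overline{\tau}^n(a)\). Since \(\overline{\tau}^n(\infty_i) = \tau^n(\infty_i)\), it suffices to show that \(\# \tau^n(a) \leq \# \overline{\tau}^n(\infty_i)\) for all \(a \in U_i\), and that \(\tau^n(p) = \varepsilon\) for \(p \in P\). The second claim is immediate from condition (2).

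For the first claim I will track count vectors. For \(a \in U_i\), let \(c_n(a) \in \N_0^m\) have \(j\)th entry equal to the number of occurrences of \(U_j\)-letters in \(\tau^n(a)\). Let \(A\) be the \(m \times m\) matrix whose \((i,j)\) entry is the number of occurrences of \(\infty_j\) in \(\tau(\infty_i)\).
\begin{itemize}
\item Letters of \(P\) erase under \(\tau\), so they contribute nothing at the next step.
\item Each \(U_k\)-letter contributes at most \(A_{kj}\) letters of \(U_j\), by condition (4).
\item Hence \(c_n(a) \leq c_{n-1}(a) A\) entrywise, and since \(A \geq 0\), induction gives \(c_n(a) \leq e_i A^n\), which is exactly the count vector of \(\overline{\tau}^n(\infty_i)\).
\item Finally, \(\# \tau^n(a) = \sum_{b \triangleleft \tau^{n-1}(a)} \# \tau(b)\), and each summand is \(0\) for \(b \in P\) and at most \(\# \tau(\infty_j)\) for \(b \in U_j\), again by condition (4).
\end{itemize}
Combining the last two points gives \(\# \tau^n(a) \leq c_{n-1}(a)\cdot(\#\tau(\infty_j))_j \leq \#\overline{\tau}^n(\infty_i)\). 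The inequality \(\|M^n\|_\mathrm{ess} \leq \|M^n\|\) is trivial (take \(V = 0\)). Taking \(n\)th roots and using Gelfand's formula and (\ref{eq:essential Gelfand}) gives \(r_\mathrm{ess}(M) \leq r(M) = r(M_\infty)\).

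\textbf{Step 2: the reverse inequality.} Fix \(n\) and choose \(i\) maximising \(L_n \coloneqq \# \overline{\tau}^n(\infty_i)\); if \(L_n = 0\) there is nothing to prove. Let \(V\) be compact; I want \(\|M^n - V\| \geq L_n\).

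First I build a sequence of test points. Since \(\A\) has only finitely many accumulation points, every neighbourhood of \(\infty_i\) within \(U_i\) is cofinite in \(U_i\), and \(\A\) is countable. So, enumerating \(\A\) and using the hypothesis, I can choose a sequence \(a_k \to \infty_i\) with \(\tau^n(a_k) \in (\A^\bullet)^\ast\). These \(a_k\) are isolated, because \(\tau^n(\infty_i)\) is a nonempty word of accumulation points. By continuity of \(\tau^n\), for large \(k\) we have \(\# \tau^n(a_k) = L_n\), and each letter of \(\tau^n(a_k)\) converges to the corresponding accumulation point of \(\tau^n(\infty_i)\). In particular, any fixed finite set of isolated points is eventually avoided. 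Passing to a subsequence, the finite sets \(S_k\) of letters of \(\tau^n(a_k)\) are therefore pairwise disjoint.

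Next I define the test functions. Let \(f_k \coloneqq \chi_{S_k}\), which is continuous because \(S_k\) is a finite set of isolated points, and which has norm \(1\). Then \((M^n f_k)(a_k) = L_n\). The sequence \((f_k)\) is bounded and pointwise convergent to \(0\), because the supports are disjoint.

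\textbf{Step 3: the main obstacle, and the conclusion.} I expect the main obstacle to be justifying that \(V f_k \to 0\) in norm. The argument I will use is the following.
\begin{itemize}
\item A bounded, pointwise null sequence in \(C(\A)\) is weakly null. This follows from the Riesz representation of \(C(\A)'\) together with dominated convergence.
\item A compact operator maps weakly null sequences to norm null sequences.
\end{itemize}
Hence \(\|V f_k\| \to 0\), and
\[
\|M^n - V\| \geq (M^n f_k)(a_k) - \|V f_k\| \to L_n .
\]
Taking the infimum over compact \(V\) gives \(\|M^n\|_\mathrm{ess} \geq L_n = \|M^n\|\), so equality holds in (\ref{eq:essential norm equation}). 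Then \(r_\mathrm{ess}(M) = r(M_\infty)\) follows by taking \(n\)th roots and limits, as in Step 1.
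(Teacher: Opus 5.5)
Your proof is correct. Step 1 is the paper's argument, an inductive use of condition (4), made precise. Your count-vector bound \(c_n(a) \leq e_i A^n\), together with the first half of (4) to absorb the \(P\)-letters that \(c_n\) does not track, is exactly what the paper leaves as ``clear''.

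Step 2 reaches \(\|M^n - V\| \geq \|M^n\|\) by a different mechanism. The paper fixes nested clopen sets \(Y_k\) with \(\bigcap_k Y_k = \A^{(\infty)}\). It uses compactness of \(V\) only in its raw form, extracting a norm-convergent, hence Cauchy, subsequence of \(V\chi_{Y_k}\). It then tests \(M^n - V\) on a difference \(\chi_{Y_k} - \chi_{Y_{k'}}\). The Cauchy property gives \(\|V(\chi_{Y_k} - \chi_{Y_{k'}})\| \leq 2\epsilon\). Meanwhile \(M^n\) of this difference still equals \(\|M^n\|\) at a letter near \(\infty_p\) whose \(n\)-superword consists of isolated letters in \(Y_k \setminus Y_{k'}\).

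You instead choose the test letters first and use disjointly supported indicators \(\chi_{S_k}\). You then appeal to two facts: bounded pointwise-null sequences in \(C(\A)\) are weakly null, and compact operators are completely continuous. The paper's route needs nothing beyond the definition of compactness. Yours isolates the underlying principle more transparently. Your weak-nullity step does not even need Riesz--Markov here: for disjointly supported \(0 \leq f_k \leq 1\) and \(\phi \in E'\), choosing signs gives \(\sum_{k \leq K} |\phi(f_k)| = \phi(\sum_{k \leq K} \pm f_k) \leq \|\phi\|\), so \(\phi(f_k) \to 0\).

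One correction: \(\A\) need not be countable. The one-point compactification of an uncountable discrete set is compact Hausdorff with a single accumulation point, so ``enumerating \(\A\)'' is not available in general. The paper's proof asserts the same countability, and its nested sequence \(Y_k\) genuinely depends on it. In that example, every clopen set containing the accumulation point is cofinite, so no countable family of them intersects to \(\A^{(\infty)}\).

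Your construction is easily freed from this assumption. Every neighbourhood of \(\infty_i\) is cofinite in \(U_i\), so any injective sequence in \(U_i\) converges to \(\infty_i\). You can therefore choose \(a_k\) recursively in the open set \(U_i \setminus \{a_1, \ldots, a_{k-1}\}\) using the hypothesis. Each such \(a_k\) is automatically isolated and distinct from \(\infty_i\), since \(\tau^n(\infty_i)\) is a non-empty word of accumulation points.
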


\begin{proof}
Trivially, \(\| M^n \|_\mathrm{ess} \leq \| M^n \|\) and \( \|M^n\| = \max_{a \in \A} (\# \tau(a)) \geq \max_{a \in \A^{(\infty)}} (\# \tau(a)) = \|M_\infty^n\|\). Condition (4) of being \(\infty\)-reduced implies the reverse of the latter inequality. Indeed, by applying (4) inductively, it is clear that for each \(a \in U_i\) we have \(\# \tau^n(a) \leq \# \tau^n(\infty_i)\), since under each application of substitution, at best letters mimic substitution of the accumulation points, and at worst some letters are lost when substituting outside of the \(U_1 \cup \cdots \cup U_m\), whereupon further substitution erases them by Condition (2). This verifies Equation (\ref{eq:essential norm equation}). It then follows from Gelfand's Formula that \(r_\mathrm{ess}(M) \leq r(M) = r(M_\infty)\).

So let us additionally suppose that, for any power of the substitution, accumulation points are arbitrarily close to letters that substitute to words of only isolated letters. Given any \(n \in \N\), we wish to show that \(\| M^n \|_\mathrm{ess} = \| M^n \|\), in other words, compact perturbations of \(M^n\) cannot decrease the operator norm. So let \(V \colon E \to E\) be a compact operator. We let \(Y_1 \supset Y_2 \supset Y_3 \supset \cdots\) be a shrinking sequence of clopen sets \(Y_i \subseteq \A\) containing the accumulation points, with \(\A^{(\infty)} = \bigcap_{i = 1}^\infty Y_i\) (such a sequence exists, since \(\A\) is compact, Hausdorff and has only finitely many accumulation points, so it is countable). Consider the associated characteristic functions \(f_i \in E\), defined by \(f_i(a) = 1\) if \(a \in Y_i\) and \(f_i(a) = 0\) otherwise.

Let \(\epsilon > 0\). By compactness of \(V\), there is a subsequence of values \(k\) for which \(V(f_k) \to f\) in \(E\), for some \(f \in E\). Suppose that \(\|M^n\| = N \in \N_0\). As seen above, by Conditions (2) and (4) of being an \(\infty\)-reduction, this means that \(N = \# \tau^n(\infty_p)\) for any value of \(p\) maximising \(\# \tau^n(\infty_p)\). Thus, \(\# \tau^n(a) = N\) for all \(a \in U_p \cap Y_i\), at least for large enough \(i\), and no other letters substitute to longer words under \(\tau^n\) (over the whole alphabet). Let \(\alpha \coloneqq f(\infty_p) \in \R\) and \(U' \subseteq U_p\) a sufficiently small open neighbourhood of \(\infty_p\) that \(f(a) = \alpha \pm \epsilon\) for all \(a \in U'\).

Let \(k\) be in the above subsequence with \(\|V(f_k) - f\| \leq \epsilon\). Let \(a \in U'\) be sufficiently close to \(\infty_p\) that \(\# \tau^n(a) = N\), and each letter of \(\tau^n(a)\) is in \(Y_k\) and not an accumulation point, which exists by our additional running assumption. Since all letters of \(\tau^n(a)\) are isolated, we may choose an even larger \(k'\) so that all letters of \(\tau^n(a)\) are outside of \(Y_{k'}\) and, by remaining on the subsequence, this can be arranged so that still \(\|V(f_{k'}) - f\| \leq \epsilon\). Then \(g = f_k - f_{k'}\) is the continuous indicator function of the set \(Y_k \setminus Y_{k'} \subset \A^\bullet\), so \(g \in E\) with \(\|g\| \leq 1\). Now,
\[
(M^n(g) - V(g))(a) = [(M^n f_k)(a) - (M^n f_{k'})(a)] - (V f_k - V f_{k'})(a) .
\]
By assumption, each of \((V f_k)(a)\) and \((V f_{k'})(a) = \alpha \pm \epsilon\). Moreover, by construction \((M^n f_k)(a) = N = \| M^n \|\) and \((M^n f_{k'})(a) = 0\), which gives
\[
(M^n(g) - V(g))(a) = (M^n f_k)(a) - (M^n f_k)(a) - f(a) + f(a) \pm 2\epsilon = \|M^n\| \pm 2\epsilon .
\]
It follows that \(\|M^n - V\| \geq \|M^n\| - 2\epsilon\). Since this holds for all \(\epsilon > 0\), we have \(\|M^n - V\| \geq \|M^n\|\). And since \(V\) was an arbitrary compact operator, we have \( \|M^n\|_\mathrm{ess} = \|M^n\|\), as required. As this holds for all \(n \in \N\), we have \(r_\mathrm{ess}(M) = r(M)\) by the Gelfand-type formula (\ref{eq:essential Gelfand}) for \(r_\mathrm{ess}\).
\end{proof}

We summarise with the following, which we will demonstrate on some further examples.

\begin{corollary}\label{cor:essential spectral radius}
Let \(\sub\) be a substitution on an alphabet containing only \(m < \infty\) many accumulation points and let \(M\) denote its substitution operator. For \(n \in \N\) the substitution \(\sub^n\) has some \(\infty\)-reduction \(\tau\), with associated finite substitution \(\overline{\tau}\) on \(\A^{(\infty)}\), whose substitution operator is denoted \(M_\infty\). Then \(r_\mathrm{ess}(M) \leq \sqrt[n]{r(M_\infty)}\), where \(r(M_\infty)\) is simply the leading eigenvalue of the \(m \times m\) (non-negative) matrix representing \(M_\infty\). Moreover, if \(\tau\) sends isolated points to words consisting of only isolated points (or the more additional condition given in Theorem \ref{thm:spectral radius of reduction}), then \(r_\mathrm{ess}(M) = \sqrt[n]{r(M_\infty)}\).
\end{corollary}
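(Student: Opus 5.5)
The corollary combines Lemma \ref{lem:infinity-reduction exists}, Lemma \ref{lem:essential spectral radius unaffected by reduction} and Theorem \ref{thm:spectral radius of reduction}, applied to the power \(\sub^n\) instead of \(\sub\). The plan is to bound \(r_\mathrm{ess}(M^n)\) via the reduction and then take \(n\)th roots.

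\textbf{Step 1 (pass to the power).} The substitution operator of \(\sub^n\) is \(M^n\), by \cite[Lemma 4.5]{MRW25}. The alphabet is unchanged, so \(\sub^n\) is again a substitution on an alphabet with exactly \(m\) accumulation points. Lemma \ref{lem:infinity-reduction exists} therefore applies and gives an \(\infty\)-reduction \(\tau\) of \(\sub^n\). This settles the existence claim in the statement.

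\textbf{Step 2 (invariance under reduction).} By Lemma \ref{lem:essential spectral radius unaffected by reduction}, \(r_\mathrm{ess}(M_\tau) = r_\mathrm{ess}(M^n)\).

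\textbf{Step 3 (spectral mapping for the Calkin image).} I need \(r_\mathrm{ess}(M^n) = r_\mathrm{ess}(M)^n\). This follows from the Gelfand-type formula (\ref{eq:essential Gelfand}): along the subsequence \(kn\),
\[
r_\mathrm{ess}(M^n) = \lim_{k} \|M^{nk}\|_\mathrm{ess}^{1/k} = \Bigl(\lim_k \|M^{nk}\|_\mathrm{ess}^{1/(nk)}\Bigr)^n = r_\mathrm{ess}(M)^n .
\]
Equivalently, \(r_\mathrm{ess}\) is the spectral radius in the Calkin algebra, where \(r(x^n)=r(x)^n\).

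\textbf{Step 4 (apply the theorem).} Theorem \ref{thm:spectral radius of reduction}, applied to \(\tau\), gives \(r_\mathrm{ess}(M_\tau) \leq r(M_\infty)\). Combining this with Steps 2 and 3,
\[
r_\mathrm{ess}(M)^n \leq r(M_\infty), \quad\text{so}\quad r_\mathrm{ess}(M) \leq \sqrt[n]{r(M_\infty)} .
\]
Since \(\overline{\tau}\) is a substitution on the finite set \(\A^{(\infty)}\), its operator \(M_\infty\) acts on \(C(\A^{(\infty)}) \cong \R^m\). It is represented by the nonnegative \(m\times m\) matrix of letter counts, whose spectral radius is its Perron--Frobenius leading eigenvalue.

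Under the extra hypothesis (isolated points map to words of isolated points, or the weaker condition of Theorem \ref{thm:spectral radius of reduction}), that theorem gives equality, \(r_\mathrm{ess}(M_\tau) = r(M_\infty)\). Taking \(n\)th roots as above then yields \(r_\mathrm{ess}(M)=\sqrt[n]{r(M_\infty)}\).

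\textbf{Main obstacle.} There is no serious obstacle. The only points that need care are the identity \(r_\mathrm{ess}(M^n)=r_\mathrm{ess}(M)^n\) and checking that the extra hypothesis is meant for \(\tau\) and not for \(\sub\), so that Theorem \ref{thm:spectral radius of reduction} applies verbatim. A minor technicality is that the \((\varepsilon_\mathrm{con})\) step may only produce a multiset substitution; the earlier remark says this does not affect the operators.
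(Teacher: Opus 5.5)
Your proof is correct and follows the paper's argument: apply Lemma \ref{lem:infinity-reduction exists} to \(\sub^n\), use Lemma \ref{lem:essential spectral radius unaffected by reduction} and Theorem \ref{thm:spectral radius of reduction} for \(\tau\), and then take \(n\)th roots via \(r_\mathrm{ess}(M^n) = r_\mathrm{ess}(M)^n\). Your subsequence justification of that identity from the Gelfand-type formula (\ref{eq:essential Gelfand}) supplies a step the paper leaves implicit.
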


\begin{proof}
The substitution \(\sub^n\), whose substitution operator is \(M^n\), has an \(\infty\)-reduction \(\tau\) by Lemma \ref{lem:infinity-reduction exists}. By Lemma \ref{lem:essential spectral radius unaffected by reduction}, the substitution operators of \(\sub^n\) and \(\tau\) have the same essential spectral radius. Then, by Theorem \ref{thm:spectral radius of reduction}, we obtain \(r_\mathrm{ess}(M^n) \leq r(M_\infty)\), and even \(r_\mathrm{ess}(M^n) = r(M_\infty)\) if \(\tau\) satisfies the additional condition defined in Theorem \ref{thm:spectral radius of reduction}. The result follows, since \(r_\mathrm{ess}(M^n) = (r_\mathrm{ess}(M))^n\).
\end{proof}

\begin{example}
This example shows why we incorporated the flexibility of a power in Corollary \ref{cor:essential spectral radius}. Let \(\A = \N_\infty\) and define the primitive substitution
\[
\sub =
\begin{cases}
0 \mapsto 0 \ 1             & \\
a \mapsto 0 \ (a+1) \ \infty & \text{for even } a \\
a \mapsto 0 \ (a+1) \ (a+1)  & \text{for odd }  a
\end{cases}
\]
where continuity dictates that \(\infty \mapsto 0 \infty \infty\). It has an \(\infty\)-reduction
\[
\tau =
\begin{cases}
0 \mapsto \varepsilon  & \\
a \mapsto (a+1) \ \infty & \text{for even } a \\
a \mapsto (a+1) \ (a+1)  & \text{for odd }  a .
\end{cases}
\]
The finite substitution \(\overline{\tau}\) is given by \(\infty \mapsto \infty \infty\) and so \(r_\mathrm{ess}(M) \leq 2\). We cannot deduce equality, though, since \(\tau\) does not map isolated letters to words of only isolated letters. Nor does it satisfy the weaker requirement of Theorem \ref{thm:spectral radius of reduction}. Indeed, \(\tau^2(a) = (a+2) (a+2) \infty \infty\) for \(a\) even and \(\tau^2(a) = (a+2) \infty (a+2) \infty\) for \(a\) odd, which in neither case consists of only isolated points for \(a \in [1..\infty)\). But this guides us to considering \(\sub^2\), which has an \(\infty\)-reduction
\[
\tau' =
\begin{cases}
0 \mapsto \varepsilon & \\
a \mapsto (a+2) (a+2) & \text{for } a > 0 . \\
\end{cases}
\]
The induced finite substitution is still \(\infty \mapsto \infty \infty\), and since isolated points now substitute to words of isolated points we obtain \(r_\mathrm{ess}(M^2) = \sqrt{2}\) from Corollary \ref{cor:essential spectral radius}. Presumably, all but the most perversely chosen examples should be possible to effectively treat in such a way, by using some appropriate power of the substitution followed by reduction. Note that, since \(\min_{a \in \A} \sub^2(a) = 5\), we have \(r(M) \geq \sqrt{5} > 2 = r_\mathrm{ess}(M)\), so \(M\) is quasi-compact and \(X\) is uniquely ergodic. 
\end{example}

\begin{example}
Let \(\sub\) be the counter-example to primitivity implying unique ergodicity on \(\A = \N_\infty\) in Section \ref{sec:non-UE}. An \(\infty\)-reduction \(\tau\) is given by \(\tau(n) = (n-2) \ (n+1)\) for all \(n \geq 2\). So the associated finite substitution \(\overline{\tau}\) on the one-element alphabet \(\{\infty\}\) is given by \(\infty \mapsto \infty \infty\), hence \(r_\mathrm{ess}(M) = 2\). This example is not quasi-compact --- if it were, \(X\) would be uniquely ergodic --- and we have \(r(M) = r_\mathrm{ess}(M) = 2\).
\end{example}

\begin{example}
Let \(\A = \Z \cup \{-\infty, \infty\}\) be the two-point compactification of the integers and consider the substitution
\[
\sub =
\begin{cases}
0 \mapsto 0 \ 1           & \\
a \mapsto 0 \ (a+1) \ (-a) & \text{for } a > 0 \\
a \mapsto 0 \ (-a)       & \text{for } a < 0 . \\
\end{cases}
\]
Again, primitivity is trivial to check. An \(\infty\)-reduction is given by \(\tau(0) = \varepsilon\), \(\tau(a) = (a+1) (-a)\) for \(a > 0\) and \(\tau(a) = (-a)\) for \(a < 0\), which restricts to the finite substitution \(\overline{\tau}(\infty) = \infty (-\infty)\), \(\overline{\tau}(-\infty) = \infty\). This is the Fibonacci substitution, which has spectral radius the golden mean \(\varphi = (1+\sqrt{5})/2\). Since \(\tau\) maps isolated points to isolated points, \(r_\mathrm{ess}(M) = \varphi\) by Corollary \ref{cor:essential spectral radius}. Since \(\min_{a \in \A} \# \sub(a) = 2\), we have \(r(M) \geq 2 > \varphi = r_\mathrm{ess}\), so \(M\) is quasi-compact and \(X\) is uniquely ergodic.
\end{example}

\begin{example}
To prevent the result becoming too technical, we restricted the statement of Theorem \ref{thm:spectral radius of reduction} to substitutions on alphabets with finitely many accumulation points. However, the final argument showing \(\|M^n\| = \|M^n\|_\mathrm{ess}\) for sufficiently restricted substitutions applies more generally. For instance, consider the family of primitive substitutions from \cite{FGM24}, which were used to show that any inflation factor \(r > 2\) may be realised by a compact alphabet substitution. A substitution in this family is defined by a bounded sequence \((a_i)_{i=0}^\infty\), with each \(a_i \in \N_0\), \(a_0 \neq 0\) and with bounded runs of \(0\)s, and is defined on \(\N_0\) by
\[
\sub = 
\begin{cases}
0 \mapsto \underbrace{0 \cdots 0}_{a_0 \text{-many}} \ 1 ,            & \\
n \mapsto \underbrace{0 \cdots 0}_{a_n \text{-many}} \ (n-1) \ (n+1) & \text{for } n > 0 . 
\end{cases}
\]
This substitution may be extended continuously to a compact alphabet \(\A\) in which \(\N_0\) is densely embedded as a set of isolated points although, depending on the sequence \((a_i)_i\), there can be finitely, countably or uncountably many accumulation points. Lemma \ref{lem:essential spectral radius unaffected by reduction} still applies in any case, so we may freely (\(\varepsilon_\mathrm{out}\)) erase all \(0\) letters from superwords, since \(0 \in \A\) is isolated. So this reduction \(\tau\) of \(\sub\), given by \(\tau(0) = 1\), \(\tau(1) = 2\) and \(\tau(n) = (n-1) \ (n+1)\) for \(n \geq 2\), has substitution operator \(M_\tau\) satisfying \(r_\mathrm{ess}(M) = r_\mathrm{ess}(M_\tau)\). Since \(\tau\) restricts to a constant length 2 substitution on the accumulation points, but also maps isolated points to words of isolated points, it easily follows that \(r_\mathrm{ess}(M_\tau) = 2\), by applying an analogous argument to the end of the proof of Theorem \ref{thm:spectral radius of reduction}, answering a question in \cite[Section 7]{FGM24}. Since \(\# \sub(a) \geq 2\) for all \(a \in \A\), and eventually every \(n\)-superword contains a letter \(b\) with \(\# \sub(b) > 2\), clearly \(\min_{a \in \A} \# \sub^n(a) > 2^n\) for some \(n \in \N\), so \(r > 2\); in fact, an explicit formula for the the inflation factor \(r = \lambda > 2\) is given in \cite[Section 4]{FGM24}. In particular, \(r > r_\mathrm{ess}(M)\) so \(M\) is quasi-compact.
\end{example}

\subsection{Equivalence of operator properties and uniform superword growth for primitive substitutions with isolated points}\label{sec:equivalence of operator properties}

In \cite{MRW25}, varying degrees of regularity of the substitution operator \(T\) were shown to be possible for primitive substitutions on compact alphabets. For instance, sometimes \(T\) is quasi-compact, but there are also primitive substitutions where \(T\) is strongly power convergent but not quasi-compact. By the running example of Section \ref{sec:non-UE}, \(T\) may even fail the weaker property of mean ergodicity (see \cite[Definition 8.4]{EFHN15} or \cite[Section 5]{MRW25}). Let us now prove Theorem \ref{thm:equivalence of operator conditions} from the introduction, whose main conclusion is that either none or all of these properties hold for primitive substitutions with an isolated point, and that the question boils down to uniformity of word growth under substitution.

\begin{proof}[Proof of Theorem \ref{thm:equivalence of operator conditions}]
The implications down the list are all trivial or already proved elsewhere. Firstly, quasi-compactness of \(M\) (equivalently \(T\)) implies strong power convergence (for primitive substitutions), as stated in Proposition \ref{prop:QC=>UE}. Uniform implying strong power convergence, implying mean ergodicity of \(T\) are trivial. Operator theoretic results can be used to show that (4) implies (5), even without irreducibility, see \cite[Proposition 5.12]{MRW25}. It is easy to show that that (5) implies (6) using \(\# \sub^n(a) = (M^n \bbo)(a)\) and that there are constants \(c\), \(C > 0\) for which \(c \bbo \leq \ell \leq C\bbo\) by irreducibility of the substitution; see \cite[Lemma 4.29]{MRW25}. And clearly (6) implies (7), since for all \(n \in \N\) and \(a\), \(b \in \A\) we have \(\# \sub^n(a) / \# \sub^n(b) \geq \alpha / \beta\).

So, suppose additionally that \(\sub\) has a weak coincidence, that is, there exists some \(p \in \A\) and \(n \in \N\) for which, for all \(a \in \A\), we have \(p \triangleleft \sub^n(a)\). This includes the case that \(\A\) contains an isolated point, since if \(p \in \A\) is isolated then \(\{p\}\) is open, so the required power exists by the definition of primitivity. Since \(M\) is quasi-compact if and only if \(M^n\) is, to simplify notation we may take \(n=1\) without loss of generality, by replacing \(\sub\) with its power \(\sub^n\). Since \(U = \A\) is clopen in \(\A\), we may define a reduction \(\tau\) of \(\sub\) by the (\(\varepsilon_\mathrm{con}\)) reduction that deletes one occurrence of \(p\) from every superword, which is at least one by our assumption.

Using (7), let \(D > 0\) be such that, for all \(n \in \N\), \(a\) and \(b \in \A\), we have \(\# \sub^n(a) / \# \sub^n(b) \geq D\), let \(L = \max_{a \in \A} \# \sub(a)\) and define \(q \coloneqq 1 - \frac{D}{L}\). We claim that, for all \(a \in \A\) and all \(n \in \N\), we have \(\# \tau^n(a) \leq q^n \# \sub^n(a)\). Trivially, this holds for \(n=0\), so suppose it holds for some \(n = k\). Let \(a \in \A\) be arbitrary and \(a_i \in \A\) spell the superword \(\tau(a) = a_1 a_2 \cdots a_n\). Then
\begin{align*}
\# \tau^{k+1}(a) = \# \tau^k(a_1) + \cdots + \# \tau^k(a_n) \leq q^k(\# \sub^k(a_1) + \cdots + \# \sub^k(a_n)) \leq \\
q^k( \# \sub^{k+1}(a) - \# \sub^k(p)) = q^k\left( \# \sub^{k+1}(a) - \# \sub^{k+1}(a) \left[\frac{\# \sub^k(p)}{\# \sub^{k+1}(a)} \right] \right) \leq \\
q^k\left( \# \sub^{k+1}(a) - \# \sub^{k+1}(a) \left[ \frac{1}{DL} \right] \right) =  q^{k+1}\# \sub^{k+1}(a) ,
\end{align*}
as required for induction. The first inequality is by the induction assumption, the second from the fact that \(\sub^{k+1}(a)\) contains each of the strings \(\sub^k(a_i)\) but also at least one extra occurrence of \(\sub^k(p)\) that is not in this list, and the final one is derived as follows, writing \(\sub(a) = b_1 \cdots b_m\),
\[
\frac{\# \sub^k(p)}{\# \sub^{k+1}(a)} = \frac{\# \sub^k(p)}{\# \sub^k(b_1) + \cdots + \# \sub^k(b_m)} \geq \frac{\min_{a \in \A} \# \sub^k(a)}{L \max_{b \in \A} \# \sub^k(b)} \geq \frac{D}{L} ,
\]
since \(m \leq L\) by definition of \(L\). Thus, for all \(a \in \A\) and \(n \in \N\), we have \(\# \tau^n(a) \leq q^n \# \sub^n(a)\). Let \(M_\tau\) be the substitution operator of \(\tau\). Given \(n \in \N\), let \(a \in \A\) maximise \(\# \tau^n(a)\). Then
\[
\|M_\tau^n\| = \# \tau^n(a) \leq q^n \# \sub^n(a) \leq q^n \max_{b \in \A} \# \sub^n(b) = q^n\|M^n\| .
\]
Applying Gelfand's Formula,
\[
r(M_\tau) = \lim_{n \to \infty} \sqrt[n]{\|M_\tau\|^n} \leq \lim_{n \to \infty} q \sqrt[n]{\|M\|^n} = q \cdot r(M) < r(M) ,
\]
as \(q = 1 - \frac{D}{L} < 1\). Since (using Lemma \ref{lem:essential spectral radius unaffected by reduction}) \(r_\mathrm{ess}(M) = r_\mathrm{ess}(M_\tau) \leq r(M_\tau) < r(M)\), we have that \(M\) and thus \(T\) is quasi-compact, as required.
\end{proof}

\appendix
\section{Strong power convergence implies unique ergodicity}

Here, we provide an alternative, more direct proof to that in \cite{MRW25}, that strong power convergence of \(T\) implies unique ergodicity of \(X\) for irreducible substitutions, staying within the symbolic setting rather than using a NLF to pass to a geometric hull with \(\R\)-action.

As noted in the previous proof (see there for references), mean ergodicity of \(T\) is sufficient for the existence of a natural length function. However, for completeness of this section, we give the trivial proof of the following:

\begin{lemma}\label{lem:SPC=>NLF}
Suppose that \(T\) is strongly power convergent. Then \(\sub\) admits a natural length function.
\end{lemma}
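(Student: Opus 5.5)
The plan is to produce the natural length function as the limit of the iterates \(T^n \bbo\). By strong power convergence, \(T^n \bbo \to \ell\) in \(E\) for some \(\ell \in E\). We will show that \(\ell\) is a non-zero positive eigenvector of \(M\).

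Positivity and the eigenvalue equation are immediate. Each \(T^n \bbo\) lies in \(K\), since \(T\) is positive, and \(K\) is closed in the supremum norm, so \(\ell \in K\). By continuity of \(T\),
\[
T\ell = \lim_{n\to\infty} T^{n+1}\bbo = \ell .
\]
Multiplying by \(r\) gives \(M\ell = r\ell\), so \(\ell\) satisfies the eigenvalue condition with \(\lambda = r \geq 0\).

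The only real step is to check that \(\ell \neq 0\). The norms satisfy
\[
\|T^n \bbo\| = \frac{1}{r^n}\max_{a \in \A} \# \sub^n(a) = \frac{\|M^n\|}{r^n} ,
\]
and Gelfand's Formula in its infimum form gives \(\|M^n\| \geq r^n\). Hence \(\|T^n\bbo\| \geq 1\) for every \(n\), and therefore \(\|\ell\| = \lim_n \|T^n \bbo\| \geq 1\). In particular \(\ell \neq 0\), so \(\ell\) is a natural length function.

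This argument needs \(r > 0\), which is implicit in the definition \(T = (1/r)M\). It also follows from the standing assumption that \(\sub\) generates arbitrarily long words, which forces \(\|M^n\| \geq 1\) for all \(n\). I do not expect any genuine obstacle; the argument does not use irreducibility. If irreducibility is assumed, Lemma \ref{lem:NLF unique and positive} additionally gives \(\ell \in K_{>0}\).
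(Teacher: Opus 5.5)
Your proposal is correct and follows the paper's proof essentially verbatim: take \(\ell = \lim_n T^n(\bbo)\), obtain \(T\ell = \ell\) and \(\ell \in K\) from continuity and positivity, and rule out \(\ell = 0\) via \(\|T^n(\bbo)\| = \|T^n\| \geq 1\) from the infimum form of Gelfand's Formula. Your added remark that the standing assumption forces \(\|M^n\| \geq 1\), hence \(r \geq 1 > 0\), is a correct minor addition that the paper leaves implicit.
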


\begin{proof}
Let \(\ell = \lim_n T^n(\bbo) \in E\). Then \(T\ell = \ell\) is a fixed point and, since \(T\) is a positive operator, clearly \(\ell \in K\). Moreover, since \(\|T^n(\bbo)\| = \|T^n\| \geq 1\) (\(1 \leq \sqrt[n]{\|T^n\|}\), limiting from above by Gelfand), we must have that \(\|\ell\| \geq 1\) and thus \(\ell \neq 0\).
\end{proof}

When \(T\) is strongly power convergent, the function \(P(f) \coloneqq \lim_{n \to \infty} T^n(f)\) is a bounded operator, by the Uniform Boundedness Principle, and a projection to the subspace of fixed points of \(T\), satisfying \(P^2 = P\) and \(P = PT\). We say that \(\mu \in K'\) is an \textbf{eigenmeasure} if \(T' \mu = \mu\) and \(\mu(\bbo) = 1\), a slightly different choice to \cite[Definition 4.24]{MRW25}, but the appropriate one here.

\begin{lemma}
Suppose that \(T\) is strongly power convergent and \(\sub\) is irreducible. Then \(T\) has a unique eigenmeasure.
\end{lemma}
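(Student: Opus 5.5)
Existence and uniqueness will both come from the projection \(P = \lim_n T^n\) together with Lemma \ref{lem:SPC=>NLF} and Lemma \ref{lem:NLF unique and positive}. By Lemma \ref{lem:SPC=>NLF}, \(\sub\) has a natural length function \(\ell\). Irreducibility and Lemma \ref{lem:NLF unique and positive} then give \(\ell \in K_{>0}\), and show that the fixed-point space of \(T\) is exactly \(\R \ell\). Since \(P\) maps into the fixed points of \(T\), for each \(f \in E\) we have \(P f = \phi(f)\,\ell\) for a unique scalar \(\phi(f)\). The map \(\phi\) is linear, bounded (as \(P\) is bounded and \(\ell \neq 0\)) and positive (as each \(T^n\) is positive, so \(Pf \in K\) for \(f \in K\), and \(\ell > 0\)). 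Thus \(\phi \in K'\). From \(P = PT\) we get \(\phi(Tf) = \phi(f)\), i.e.\ \(T'\phi = \phi\).

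Next I would show \(\phi(\bbo) > 0\), so that \(\mu \coloneqq \phi/\phi(\bbo)\) is an eigenmeasure. For this I would choose \(c>0\) with \(\ell \le c\bbo\), which is possible by continuity on compact \(\A\). Then \(\ell = P\ell \le c P\bbo = c\,\phi(\bbo)\ell\). Since \(\ell \ne 0\), this forces \(\phi(\bbo) \ge 1/c > 0\). This settles existence.

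For uniqueness, I take any eigenmeasure \(\nu\), so \(T'\nu = \nu\), \(\nu \in K'\) and \(\nu(\bbo) = 1\). For \(f \in E\) we have \(\nu(f) = \nu(T^n f)\) for all \(n\). Since \(T^n f \to Pf\) in norm and \(\nu\) is continuous, \(\nu(f) = \nu(Pf) = \phi(f)\,\nu(\ell)\). So \(\nu = \nu(\ell)\,\phi\), and the normalisation \(\nu(\bbo)=1\) forces \(\nu(\ell) = 1/\phi(\bbo)\), i.e.\ \(\nu = \mu\).

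The only delicate point is the positivity of \(\phi(\bbo)\) (equivalently, that \(\phi \ne 0\)). This relies on \(\ell\) being strictly positive and bounded, which is exactly where irreducibility enters, via Lemma \ref{lem:NLF unique and positive}. The one-dimensionality of the fixed-point space is also essential, since it is what makes \(\phi\) well defined. Everything else is formal manipulation of the strong limit.
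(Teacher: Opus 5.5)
Your proof is correct and follows essentially the same approach as the paper: both define the functional through \(Pf = \phi(f)\,\ell\) using the one-dimensional fixed space from Lemma \ref{lem:NLF unique and positive}, obtain invariance from \(P = PT\), and obtain uniqueness from \(\nu(f) = \nu(Pf)\). The only difference is that the paper takes \(\ell \coloneqq P(\bbo)\). Then \(\phi(\bbo) = 1\) holds automatically, and your comparison argument \(\ell \le c\,\phi(\bbo)\,\ell\) for \(\phi(\bbo) > 0\) is not needed, though it is valid.
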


\begin{proof}
Let \(\ell\) be a NLF, which exists by Lemma \ref{lem:SPC=>NLF} and may be taken as \(\ell = P(\bbo)\). Moreover, it is the unique fixed point of \(T\) by Theorem \ref{lem:NLF unique and positive}, so \(P\) is a projection to the one-dimensional subspace spanned by \(\ell\). Suppose an eigenmeasure \(\mu\) exists. Then for \(f \in E\) be arbitrary, with \(Pf = c\ell\), since \(\mu(f) = \mu(Tf) = \mu(T^2f) = \cdots\), by invariance and continuity we have \(\mu(f) = \mu(Pf) = \mu(c \ell) = c \mu(\ell) = c \mu(P \bbo) = c \mu(\bbo) = c\), the latter from the normalisation \(\mu(\bbo) = 1\). So there is at most one eigenmeasure. And there is one, since \(\mu\) may be simply defined by \(\mu(f) \coloneqq c\) here. Continuity \(\mu \in E'\) is clear, by boundedness of \(P\), and is invariance follows from \(T'\mu(f) = \mu(Tf) = \mu(PTf) = \mu(Pf) = \mu(f)\). Positivity \(P(K) \subseteq K\) follows from that of \(T\) and, since \(\ell \geq 0\), we have \(\mu(f) \geq 0\) for all \(f \in K\), so \(\mu \in K'\) and is an eigenmeasure.
\end{proof}

\begin{lemma}\label{lem:SPC=>growth bound}
Suppose that \(\sub\) is irreducible and \(T\) is strongly power convergent. Take the natural length function \(\ell \coloneqq P(\bbo)\). Then, for all \(\delta > 0\) there exists some \(N = N(\delta)\) so that, for all \(n \geq N\) and all \(a \in \A\), we have
\[
\frac{\# \sub^n(a)}{r^n \ell(a)} = 1 \pm \delta .
\]
\end{lemma}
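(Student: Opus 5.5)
The plan is to use the identity \(\# \sub^n(a) = (M^n \bbo)(a) = r^n (T^n \bbo)(a)\), so that
\[
\frac{\# \sub^n(a)}{r^n \ell(a)} = \frac{(T^n \bbo)(a)}{\ell(a)} .
\]
Everything then reduces to the convergence \(T^n \bbo \to P(\bbo) = \ell\) in the supremum norm, which is exactly strong power convergence applied to the single function \(\bbo \in E\). What remains is to turn this additive, uniform convergence into a multiplicative estimate uniform in \(a\). For that we need a uniform positive lower bound on \(\ell\).

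For the lower bound we invoke Lemma \ref{lem:SPC=>NLF}: \(\ell = P(\bbo)\) is a natural length function. Lemma \ref{lem:NLF unique and positive}, using irreducibility, then gives \(\ell \in K_{>0}\). Since \(\ell\) is continuous and \(\A\) is compact, \(c_1 \coloneqq \min_{a \in \A} \ell(a) > 0\).

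Given \(\delta > 0\), we choose \(N\) by strong power convergence so that \(\|T^n \bbo - \ell\| \leq c_1 \delta\) for all \(n \geq N\). Then for every \(a \in \A\) and \(n \geq N\),
\[
\left| \frac{(T^n \bbo)(a)}{\ell(a)} - 1 \right| = \frac{|(T^n \bbo)(a) - \ell(a)|}{\ell(a)} \leq \frac{c_1 \delta}{c_1} = \delta ,
\]
which is the claimed estimate \(\frac{\# \sub^n(a)}{r^n \ell(a)} = 1 \pm \delta\).

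There is no real obstacle here. The only point needing care is the strict positivity of \(\ell\), which is what allows us to divide by \(\ell(a)\) uniformly. That positivity comes from irreducibility via Lemma \ref{lem:NLF unique and positive}, together with \(\ell \neq 0\) from Lemma \ref{lem:SPC=>NLF}. Note also that the normalisation \(\ell = P(\bbo)\) is forced: any other scaling of \(\ell\) would change the limit of the ratio away from \(1\).
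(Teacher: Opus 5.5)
Your proof is correct and follows the paper's argument essentially line for line. You get a uniform lower bound \(c>0\) on \(\ell = P(\bbo)\) from irreducibility, choose \(N\) by strong power convergence so that \(\|T^n\bbo - \ell\| \leq c\delta\), and divide \((M^n\bbo)(a)/r^n = \ell(a) \pm c\delta\) by \(\ell(a)\).
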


\begin{proof}
By irreducibility, \(\ell \in K_{>0}\), hence may take \(c > 0\) so that, for all \(a \in \A\), we have \(\ell(a) > c\). Using strong power convergence of \(T\), given \(\delta > 0\), take \(N\) so that, for all \(n \geq N\), we have \(\|T^n(\bbo) - \ell\| \leq c \delta\). Then, for all \(a \in \A\), we have
\[
\frac{\# \sub^n(a)}{r^n} = \frac{(M^n \bbo)(a)}{r^n} = \ell(a) \pm c \delta ,
\]
from which the result follows by multiplying both sides by \(1/\ell(a) \leq 1/c\).
\end{proof}

\begin{theorem}
If \(\sub\) is irreducible and \(T\) is strongly power convergent, then \(\sub\) admits a natural length function and \(X\) is uniquely ergodic.
\end{theorem}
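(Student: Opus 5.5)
The NLF part is already Lemma \ref{lem:SPC=>NLF}, so the real task is unique ergodicity. I will verify condition (1) of Theorem \ref{thm:UE characterisation}: time-averages of every \(F \in C(X)\) converge uniformly to a constant. The constant will be built from the unique eigenmeasure \(\mu\) given by the preceding lemma. I expect the main obstacle to be general \(F\), which depend on many coordinates and not just \(w_0\); I will handle this by an approximation-plus-desubstitution argument.

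\textbf{Step 1 (one-coordinate functions).} Take \(F(w) = f(w_0)\) with \(f \in E\). Write \(P f = \mu(f)\ell\) with \(\ell = P\bbo\). Combining \(T^n f \to \mu(f)\ell\) with Lemma \ref{lem:SPC=>growth bound} gives, uniformly in \(a \in \A\),
\[
\frac{1}{\#\sub^n(a)}\sum_{b \triangleleft \sub^n(a)} f(b) = \frac{r^n (T^n f)(a)}{\#\sub^n(a)} \longrightarrow \mu(f).
\]
So averages of \(F\) along any \(n\)-superword are within \(\epsilon\) of \(\mu(f)\) once \(n \geq N\).

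\textbf{Step 2 (desubstitution of long intervals).} Next I pass from superwords to arbitrary windows \(w_{[0,k)}\) with \(w \in X\). Every legal word is a limit of generated words, and the averages are continuous in the letters for fixed length. Hence it suffices to treat windows inside some superword \(\sub^m(c)\) with \(m\) large. Such a window decomposes as a concatenation of complete \(n\)-superwords \(\sub^n(b)\), plus at most two partial \(n\)-superwords at the ends. Each partial piece has length at most \(\max_a \#\sub^n(a)\), which is a fixed constant once \(n\) is fixed. Using \eqref{eq:split average}, the average over the window is then \(\mu(f) \pm \epsilon\) up to an error of order \(2\max_a\#\sub^n(a)/k\), and this error tends to zero as \(k \to \infty\). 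That gives uniform convergence for one-coordinate \(F\).

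\textbf{Step 3 (general \(F\)).} For general \(F \in C(X)\) I will use the standard trick of passing to the \(k\)-collared substitution on the compact alphabet \(\lang(\sub)\cap\A^{2k+1}\) of legal words of length \(2k+1\), with its induced substitution \(\sub_{(k)}\). Functions of \(w_{[-k,k]}\) become one-coordinate functions there. This requires showing that \(\sub_{(k)}\) is again irreducible and that its operator \(T_{(k)}\) is strongly power convergent, and that transfer is where I expect the most work.

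An alternative that avoids collaring is to bound averages of a function of \(w_{[-k,k]}\) over \(\sub^n(a)\) directly. For large \(n\), these averages are dominated by the patterns seen inside \(\sub^n(a)\), and the \(n\)-superword boundaries contribute at most a proportion \(2k/\min_a\#\sub^n(a)\), which tends to zero by Lemma \ref{lem:irreducible=>letter growth}. Within \(\sub^n(a) = \sub^{n-j}(\sub^j(a))\), the count of a given \(j\)-level pattern reduces to one-coordinate averages of functions on \(\A\). These are of the form \(b \mapsto\) (average of \(F\)-values inside \(\sub^j(b)\)), which are continuous in \(b\), so Step 1 applies to them.

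Finally, by Stone--Weierstrass, functions depending continuously on finitely many coordinates are dense in \(C(X)\). Uniform convergence of their averages then passes to every \(F \in C(X)\), which proves unique ergodicity.
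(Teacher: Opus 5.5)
Your proof works, provided Step 3 is carried by the collaring-free alternative. As you note, the collared route leaves strong power convergence of \(T_{(k)}\) unproved, and proving it would require the same computation as the alternative, so drop it.

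\textbf{Comparison with the paper.} The core idea is the paper's: write averages over high-level superwords as a power of \(T\) applied to a function on \(\A\) that records averages over lower-level superwords, then desubstitute long windows. The execution differs in three ways.
\begin{enumerate}
\item The paper treats an arbitrary \(f\in C(X)\) directly. It defines \(g(a)\) as the average of \(f\) over \(\sub^k(a)\) in some chosen context, which need not be continuous, and replaces it by a \(\delta\)-close continuous \(\widetilde g\) built from a partition of unity. You instead reduce by Stone--Weierstrass to functions \(F(w)=\phi(w_{[-k,k]})\). For these, the sum \(G_j(b)\) of \(\phi\) over positions whose \(k\)-window lies inside \(\sub^j(b)\) is exactly continuous, so no smoothing is needed.
\item The paper weights by \(\ell\), using \(\#\sub^k(b)\approx r^k\ell(b)\) and applying \(T^p\) to \(h=\ell\widetilde g\) with explicit error tracking. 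Your length weighting can be done exactly.
\item The paper passes to arbitrary intervals via representability. You use density of generated words plus continuity of window averages. This is justified because \(\#\sub^m(c)\) is non-decreasing in \(m\) for non-erasing \(\sub\), so a generated word longer than \(\max_a\#\sub^n(a)\) can only sit inside a superword of level \(m>n\).
\end{enumerate}
Your route is more self-contained and gives the invariant measure on cylinder functions explicitly as \(\lim_j \mu(G_j)/r^j\). The paper's handles all of \(C(X)\) in one pass with explicit constants.

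\textbf{Points to fix in the alternative.}
\begin{itemize}
\item The decomposition you need is \(\sub^n(a)=\sub^j(\sub^{n-j}(a))\): \(j\)-superwords indexed by the letters \(b\) of \(\sub^{n-j}(a)\). The grouping \(\sub^{n-j}(\sub^j(a))\) you wrote is the wrong one.
\item The boundary loss that matters is at the \(j\)-superword boundaries, which include the \(n\)-level ones. Its proportion is at most \(2k/\min_b\#\sub^j(b)\), so it is \(j\), not \(n\), that must be large.
\item The average over \(\sub^n(a)\) is length-weighted. Up to that loss it equals \((T^{n-j}G_j)(a)/(T^{n-j}M^j\bbo)(a)\). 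This converges uniformly in \(a\) to \(\mu(G_j)/\mu(M^j\bbo)=\mu(G_j)/r^j\). Step 1, which is about unweighted averages, should therefore be applied to \(G_j\) and \(M^j\bbo\) separately.
\item The resulting constants depend on \(j\). They form a Cauchy sequence because the estimates for different \(j\) must overlap; this is the same nested-interval argument that ends the paper's proof.
\end{itemize}
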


\begin{proof}
Existence of the NLF has already been established (even without irreducibility) and may be taken as \(\ell = P(\bbo)\). Let \(f \colon X \to \R\) be continuous and \(\epsilon > 0\) be arbitrary. We aim to show uniform convergence of the time-averages \(\avg^n_w(f)\). Of course, this holds for \(f\) if and only if it holds for any other rescaling \(Cf\), for \(C \neq 0\), so without loss of generality \(\|f\| \leq 1\).

Let \(0 < \delta < 1\); later, we will show how \(\delta = \delta(\epsilon)\) may be defined explicitly in terms of \(\epsilon\) at this point. Let \(N = N(\delta)\) be as in Lemma \ref{lem:SPC=>growth bound}; by taking \(N\) larger still, we may assume the reciprocal there also satisfies \(r^n \ell(a) / \# \sub^n(a) = 1 \pm \delta\) for all \(n \geq N\). For some sufficiently large \(k \geq N\), there is a sufficiently fine open cover \(\mathscr{C} = \{U_i\}\) of \(\A\) satisfying the following: if \(a\) and \(b\) belong to a common \(U_i\) then
\begin{enumerate}
	\item \(\# \sub^k(a) = \# \sub^k(b)\) (denoted \(m\) below);
	\item for all \(w\), \(w' \in X\) and intervals \(I\), \(I' \subseteq \Z\) with \(w_I = \sub^k(a)\) and \(w'_{I'} = \sub^k(b)\) (so, of course, \(\#I = \#I' = m\)), then \( \avg_w^I(f) = \avg_{w'}^{I'}(f) \pm \delta\).
\end{enumerate}
Indeed, because all \(k\)-superwords grow in length, by taking a large \(k\) we may ensure that all but an arbitrarily small proportion of the letters in any \(k\)-superword lie far from the boundary. Then, by ensuring that \(a\) and \(b\) are sufficiently close (taking a sufficiently fine cover, relative to \(k\)), we have \(\# \sub^k(a) = \# \sub^k(b)\) and that all constituent letters of these superwords are pairwise close. Then, by continuity of \(f\), its values on the vast majority of letters (away from the boundary) are also pairwise close, so the corresponding time-averages are close. By compactness, we may take the open cover \(\mathscr{C} = \{U_i\}\) to be finite.

For all \(a \in \A\), choose some \(w_a \in X\) with \((w_a)_{[0,m)} = \sub^k(a)\), where \(m \coloneqq \# \sub^k(a)\). Such a word exists by Corollary \ref{cor:language contains superwords}. Define \(g \colon \A \to \R\) by \(g(a) \coloneqq \avg_{w_a}^{[0,m)}(f)\). Unfortunately, this need not be continuous. However, it is \(\delta\)-close to a continuous function, constructed as follows: choose a partition of unity \(\{\psi_i \colon \A \to [0,1]\}\) subordinate to \(\mathscr{C}\) (where \(\psi_i\) is supported on \(U_i\)) and let \(\widetilde{g} \colon \A \to \R\) be defined by
\[
\widetilde{g}(a) \coloneqq \sum_{U_i \in \mathscr{C}} \psi_i(a) g_i  , \ \text{ where } \ g_i \coloneqq \inf_{x \in U_i} g(x) \in [-1,1] .
\]
Each \(g_i \in [-1,1]\) here since the averages \(g(x)\) are of a function with \(\|f\| \leq 1\), which means that \(\|\widetilde{g}\| \leq 1\); note that defining the \(g_i\) as infina here is not important, we may take each as any number between the infimum and supremum of \(g(U_i)\). As a finite linear combination of continuous functions, \(\widetilde{g}\) is continuous. Moreover, for all \(a \in \A\), we have \(\widetilde{g}(a)= g(a) \pm \delta\). Indeed, let \(J = J_a\) be the set of indices for which \(a \in U_i\). Then, by our demands of \(\mathscr{C}\) above, \(|g_i - g(a)| \leq \delta\) for all \(i \in J\), and \(\psi_i(a) = 0\) for \(i \notin J\), so
\[
|\widetilde{g}(a) - g(a)| = \left| \left(\sum_{j \in J} \psi_j(a) g_j \right) - g(a) \right| = \left| \left(\sum_{j \in J} \psi_j(a) (g_j - g(a)) \right) \right| \leq \sum_{j \in J} \psi_j(a) \delta = \delta .
\]
Now define \(h(a) \coloneqq \ell(a) \cdot \widetilde{g}(a)\), which is a continuous function \(h \colon \A \to \R\).

By strong power convergence, we may apply \(T\) to \(h \in E\) sufficiently many times, say \(p\) with \(p \geq N\), so that \(\|T^p(h) - P(h)\| \leq \delta\), where \(P(h) = c\ell\) for some \(c \in \R\). For all \(a \in \A\), we have \((T^p h)(a) = c\ell(a) \pm \delta\) and thus
\begin{equation}\label{eq:power close to constant}
\frac{(T^p h)(a)}{\ell(a)} = c \pm c_1 \delta
\end{equation}
where \(c_1 \coloneqq \sup_{a \in \A} (1/\ell(a))\). Recall that we also chose \(N\) in such a way that, for all \(a \in \A\) and \(n \geq N\) (which includes \(n = p\) and \(n = k\)), we have
\begin{equation}\label{eq:superword length estimate}
\frac{\# \sub^n(a)}{ \ell(a) r^n} = 1 \pm \delta = \frac{ \ell(a) r^n}{\# \sub^n(a)} .
\end{equation}
Take any \((k+p)\)-superword, which we may write as a concatenation \(\sub^{k+p}(a) = \sub^k(a_1) \cdots \sub^k(a_m)\) of \(k\)-superwords (with \(m = \# \sub^p(a)\) and \(a_i \in \A\)). Then, by Equation (\ref{eq:power close to constant}) and definition of \(T\), we have
\[
\frac{1}{r^p \ell(a)} \sum_{j=1}^m h(a_i) = \frac{1}{r^p \ell(a)}(M^p h)(a) = \frac{(T^p h)(a)}{\ell(a)} = c \pm c_1\delta .
\]

Let us now consider averages of \(f\) over \((k+p)\)-superwords: suppose that \(w \in X\) and \(I\) is some interval with \(w_I = \sub^{k+p}(a)\), for some \(a \in \A\), where \(\# I = \# \sub^{k+p}(a) \eqqcolon L\). Let us denote \(\sub^p(a) = a_1 a_2 \cdots a_m\), with letters \(a_i \in \A\) and \(m \coloneqq \# \sub^p(a)\). Similarly, denote \(\sub^{k+p}(a) = b_1 \cdots b_m\) as the concatenation of \(k\)-superwords \(b_i \coloneqq \sub^k(a_i)\), whose lengths we denote \(L_i \coloneqq \# b_i\).

We may split the average calculation (Equation (\ref{eq:split average})) into \(k\)-superwords as
\[
\avg_w^I = \avg_w^I(f) = \frac{L_1}{L} \avg_w^{I_1}(f) + \frac{L_2}{L} \avg_w^{I_2}(f) + \cdots + \frac{L_m}{L} \avg_w^{I_m}(f) 
\]
where \(I = I_1 \sqcup \cdots \sqcup I_m\) is the canonical splitting of the interval according to the positions of the \(k\)-superwords \(b_1\), \(b_2\), \ldots, \(b_m\), so that each \(\#I_i = L_i\). By Property (2) stated at the start of this proof and our definition of \(g\) below it, each \(\avg_w^{I_i}(f) = g(b_i) \pm \delta\) and thus
\[
\avg_w^I = \left(\sum_{i=1}^m \frac{L_i}{L} g(b_i)\right) \pm \delta .
\]
Similarly, since \(g = \widetilde{g} \pm \delta\), it follows that
\[
\avg_w^I = \sum_{i=1}^m \left( \frac{L_i}{L} \widetilde{g}(b_i) \right) \pm  2\delta = \frac{r^k}{L} \sum_{i=1}^m \left( \frac{L_i}{r^k} \widetilde{g}(b_i) \right) \pm  2\delta.
\]
Now, by the definition of \(L_i\) and Equation (\ref{eq:superword length estimate}),
\[
\frac{L_i}{r^k} = \frac{\# \sub^k(b_i)}{r^k} = \ell(b_i) \pm c_2 \delta 
\]
where \(c_2 \coloneqq \sup_{a \in \A} \ell(a)\). Thus, since \(\| \widetilde{g} \| \leq 1\), we have
\begin{equation} \label{eq:avg estimate}
\avg_w^I = \frac{r^k}{L} \sum_{i=1}^m \ell(b_i) \widetilde{g}(b_i) \pm  \delta\left( 2 + c_2 m \frac{r^k}{L} \right) = \frac{r^k}{L} \sum_{i=1}^m h(b_i) \pm  \delta p_1
\end{equation}
where \(p_1 \coloneqq 2 + c_2 m r^k / L\). Regarding the latter term, using Equation (\ref{eq:superword length estimate}) again,
\[
m \frac{r^k}{L} = \# \sub^p(a) \frac{r^k}{ \# \sub^{k+p}(a)} = \left(\frac{\# \sub^p(a)}{r^p \ell(a)} \right) \left( \frac{r^{k+p} \ell(a)}{\# \sub^{k+p}(a) } \right) = 1 \pm (2\delta + \delta^2) 
\]
so the error \(\pm \delta p_1\) in the Equation (\ref{eq:avg estimate}) is at most \(\pm \delta p_2\) with \(p_2 \coloneqq 2 + c_2(2\delta + \delta^2)\). Similarly, multiplying Equation (\ref{eq:avg estimate}) (with error \(\pm \delta p_2\)) by
\[
1 = \frac{\# \sub^{k+p}(a)}{\ell(a) r^{k+p}} \pm \delta = \frac{L}{\ell(a) r^{k+p}} \pm \delta ,
\]
using that \(|\avg_w^I| \leq 1\) (from \(\|f\| \leq 1\)), we have
\[
\avg_w^I = \frac{1}{r^p \ell(a)} \sum_{i=1}^m h(b_i) \pm \delta (1 + p_2 + \delta p_2) = \frac{T^p h(a)}{\ell(a)} \pm \delta p_3,
\]
where \(p_3 \coloneqq 1 + p_2 + \delta p_2\). Here, we use the trivial fact that if \(x = y \pm \delta_1\), \(|x| \leq 1\) and \(1 = \alpha \pm \delta_2\), then \(x = \alpha y \pm (\delta_1 + \delta_2 + \delta_1 \delta_2)\); indeed, \(|x-\alpha y| = |\alpha(x-y) + (x-\alpha x)| \leq |\alpha| |x-y| + |1-\alpha||x| \leq (1+\delta_2)\delta_1 + \delta_2\) (which we also used above in estimating \(mr^k / L\)).

By Equation (\ref{eq:power close to constant}), we thus have
\[
\avg_w^I(f) = c \pm \delta p_4 ,
\]
where \(p_4 = p_3 + c_1\), so the above error is \(\delta p_4\) where \(p_4\) is a polynomial expression in \(\delta\) that only depends on the constants \(c_1\), \(c_2\) (which themselves only depend on \(\sub\)). In particular, by taking \(0 < \delta < 1\) sufficiently small at the start of the argument, we may ensure that
\begin{equation} \label{eq:superword average bound}
\avg_w^I(f) = c \pm \frac{\epsilon}{2}
\end{equation}
for any interval \(I\) spanning a \((k+p)\)-superword of \(w\).

The required conclusion, for general sufficiently long intervals, is now quite obvious. To spell out the details, take \(I\) to be any (soon, reasonably long) finite interval, and partition \(I = I_1 \sqcup I_2 \sqcup \cdots \sqcup I_m \sqcup R\) where each \(I_i\) is a set of indices covered by a \((k+p)\)-superword, with \(R\) the remaining indices. By `representability' (i.e., every \(w \in X\) is given by a shift of \(\sub^{k+p}(w')\) for some \(w' \in X\), see \cite[Proposition 3.21]{MRW25}), the set \(R\) may be taken as a union of at most two intervals, each of length at most \(\max_{a \in \A} \# \sub^{k+p}(a)\). So denoting \(l \coloneqq \# I\) and \(q \coloneqq \# R\), we have \(q \leq 2 \max_{a \in \A} \# \sub^{k+p}(a)\), a term that does not depend on \(l\), and
\[
\avg_w^I(f) = \frac{\# I_1}{l}\avg_w^{I_1}(f) + \cdots + \frac{\# I_m}{l}\avg_w^{I_m}(f) + \frac{q}{l}\avg_w^R(f) .
\]
From Equation  (\ref{eq:superword average bound}), every \(\avg_w^{I_i} = c \pm \epsilon/2\) and, since \(\|f\| \leq 1\), we have \(\avg_w^R \leq 1\) so that
\[
\avg_w^I(f) = \left( \frac{l-q}{l} \right) c \pm \left(\left(\frac{l-q}{l}\right) \frac{\epsilon}{2} + \frac{q}{l}\right) .
\]
As \(l \to \infty\) we have \((l-q)/q \to 1\) and \(q/l \to 0\) so that, for sufficiently large \(l\), we have
\[
\avg_w^I(f) = c \pm \epsilon .
\]
Thus, time-averages converge uniformly to a limit, establishing unique ergodicity. Technically, in the above, the value \(c\) was allowed to depend on the value of \(\epsilon > 0\) considered. However, we have shown that for all \(\epsilon > 0\) there is some \(c = c_\epsilon \in \R\) and some \(l_\epsilon \in \N\) so that, for all \(w \in X\) and all intervals \(I\) of length at least \(l_\epsilon\), we have \(\avg_w^I \in [c_\epsilon - \epsilon , c_\epsilon + \epsilon]\). Without loss of generality (increasing any terms if needed), \((l_{(1/n)})_n\) is an increasing sequence, so it follows that the intervals \([c_{1/n} - 1/n , c_{1/n} + 1/n]\) are nested, hence that the time-averages converge uniformly to the unique common intersection point.
\end{proof}

\bibliography{biblio}
\bibliographystyle{alpha}

\end{document}